\newtheorem{theorem}{Theorem}[section]
\newtheorem{lemma}[theorem]{Lemma}
\newtheorem{proposition}[theorem]{Proposition}
\newtheorem{corollary}[theorem]{Corollary}
\newtheorem{remark}[theorem]{Remark}
\DeclareMathOperator{\loc}{loc}
\DeclareMathOperator{\dist}{dist}
\DeclareMathOperator{\tr}{tr}
\DeclareMathOperator{\supp}{supp}
\DeclareMathOperator{\Div}{div}
\DeclareMathOperator{\dvol}{dvol}
\DeclareMathOperator{\R}{\mathbb{R}}
\DeclareMathOperator{\Z}{\mathbb{Z}}
\DeclareMathOperator{\ind}{ind}
\DeclareMathOperator{\grad}{grad}
\DeclareMathOperator{\link}{link}
\newcommand{\mres}{\mathbin{\vrule height 1.6ex depth 0pt width
0.13ex\vrule height 0.13ex depth 0pt width 1.3ex}}
\title{Plateau's problem via the Allen--Cahn functional}
\author{Marco A. M. Guaraco}
\author{Stephen Lynch}
\address{Department of Mathematics, Imperial College London, London SW7 2AZ, United Kingdom}
\email{guaraco@imperial.ac.uk}
\email{stephen.lynch@imperial.ac.uk}
\begin{document}

\maketitle

\begin{abstract}
Let $\Gamma$ be a compact codimension-two submanifold of $\mathbb{R}^n$, and let $L$ be a nontrivial real line bundle over $X = \mathbb{R}^n \setminus \Gamma$. We study the Allen--Cahn functional, \[E_\varepsilon(u) = \int_X \varepsilon \frac{|\nabla u|^2}{2} + \frac{(1-|u|^2)^2}{4\varepsilon}\,dx,\] on the space of sections $u$ of $L$. Specifically, we are interested in critical sections for this functional and their relation to minimal hypersurfaces with boundary equal to $\Gamma$. We first show that, for a family of critical sections with uniformly bounded energy, in the limit as $\varepsilon \to 0$, the  associated family of energy measures converges to an integer rectifiable $(n-1)$-varifold $V$. Moreover, $V$ is stationary with respect to any variation which leaves $\Gamma$ fixed. Away from $\Gamma$, this follows from work of Hutchinson--Tonegawa; our result extends their interior theory up to the boundary $\Gamma$. 

Under additional hypotheses, we can say more about $V$. When $V$ arises as a limit of critical sections with uniformly bounded Morse index, $\Sigma := \supp \|V\|$ is a minimal hypersurface, smooth away from $\Gamma$ and a singular set of Hausdorff dimension at most $n-8$. If the sections are globally energy minimizing and $n = 3$, then $\Sigma$ is a smooth surface with boundary, $\partial \Sigma = \Gamma$ (at least if $L$ is chosen correctly), and $\Sigma$ has least area among all surfaces with these properties. We thus obtain a new proof (originally suggested in a paper of Fr\"{o}hlich and Struwe) that the smooth version of Plateau's problem admits a solution for every boundary curve in $\mathbb{R}^3$. This also works if $4 \leq n\leq 7$ and $\Gamma$ is assumed to lie in a strictly convex hypersurface. 
\end{abstract}

\section{Introduction}

Given a $W^{1,2}_{\loc}$-function $u$ of a domain $\Omega$, its Allen--Cahn energy is given by 
    \[E_\varepsilon(u) = \int_\Omega \varepsilon\frac{|\nabla u|^2}{2} + \frac{(1-|u|^2)^2}{4\varepsilon}\,dx.\]
Such double-well functionals were first introduced by physiscists as models for phase-transition phenomena. It is now well known that there is a deep link between these functionals and minimal hypersurfaces. Modica and Mortola \cite{Modica--Mortola} showed that, in the limit as $\varepsilon \to 0$, the Allen--Cahn functional $\Gamma$-converges to the perimeter functional on Cacciopoli sets in $\Omega$. In particular, functions which minimize $E_\varepsilon$ in compact subsets of $\Omega$ exhibit energy concentration on a perimeter-minimizing boundary. Modica later gave a different proof of this property of minimizers in \cite{Modica}. The results of \cite{Modica--Mortola} and \cite{Modica} were extended to sections of a real line bundle (as opposed to functions) over a \emph{closed} Riemannian manifold by Baldo--Orlandi \cite{Baldo--Orlandi}. In that setting, sections which minimize an appropriate analogue of the Allen--Cahn functional concentrate energy on a mass-minimizing $(n-1)$-cycle. Moreover, the cycles which arise in this way all lie in a certain mod~2 homology class determined by the chosen line bundle (namely, the Poincar\'{e}-dual to its first Stiefel--Whitney class). 

In their fundamental work \cite{HT}, Hutchinson and Tonegawa considered general critical points (not just minimizers) of the Allen--Cahn energy. They established that, in the limit as $\varepsilon \to 0$, critical points with uniformly bounded energy exhibit energy concentration on a stationary integer rectifiable $(n-1)$-varifold. Together with the regularity theory for stable critical points developed in \cite{TW}, this opened the door to striking geometric applications, beginning with a new proof of the Almgren--Pitts existence theorem for minimal hypersurfaces in Riemannian manifolds \cite{Guaraco}. For further applications in this vein, see \cite{Gaspar-Guaraco, Chodosh-Mantoulidis, Bellettini_1, Bellettini_2, Bellettini--Workman, Bellettini--Marshall-Stevens, CM_widths}. 

Our first goal in this paper is to generalise the convergence theory of Hutchinson--Tonegawa to sections of real line bundles over the complement of a compact codimension-two submanifold $\Gamma \subset \mathbb{R}^n$, where $n \geq 2$. For $n = 3$ this setting was already considered by Fr\"{o}hlich and Struwe in \cite{Struwe}. They observed that the Allen--Cahn functional on the space of sections of such a bundle ought to be closely related to the area functional on the space of minimal hypersurfaces with boundary $\Gamma$. Our results begin to make precise the link between the variational theories of these two functionals.

Let us fix some notation and terminology. Let $X = M \setminus \Gamma$, where $M$ is either $\mathbb{R}^n$ or an open ball $B \subset \mathbb{R}^n$. Let $L$ be a real line bundle over $X$, equipped with a bundle metric $\langle \cdot, \cdot \rangle $ and a flat metric connection $\nabla$. Given a section $u \in W^{1,2}_{\loc}(X, L)$, we define its Allen--Cahn energy to be
    \begin{equation}\label{energy}
    E_\varepsilon(u) = \int_X \varepsilon \frac{|\nabla u|^2}{2} + \frac{W(u)}{\varepsilon}\,dx, \qquad W(u) := \frac{1}{4}(1-|u|^2)^2.
    \end{equation}
More generally, the energy of $u$ in a subset $\Omega$ will be denoted $E_\varepsilon(u, \Omega)$. We say that $u$ is critical for $E_\varepsilon$ if firstly $E_\varepsilon(u,K) < \infty$ for every compact $K \subset X$ and secondly
    \[\frac{d}{dt}\bigg|_{t=0} E_\varepsilon(u + t\varphi) = 0\]
for every section $\varphi \in C^1_0(X, L)$. Equivalently, $u$ is critical if it weakly solves the Euler--Lagrange equation  
    \[\varepsilon^2 \Delta u = (|u|^2 - 1)u.\]
This equation is subcritical (the right-hand side is $O(|u|^p)$ for $p < \frac{n+2}{n-2}$), so standard elliptic regularity theory implies that every critical section is in fact smooth.

So far our discussion has concerned an arbitrary line bundle $L \to X$. From now on we will always work with a specific choice of nontrivial $L$. When we study the local behaviour of critical sections near $\Gamma$, it will suffice to work in balls $M=B$ such that $B \cap \Gamma$ is close to an $(n-2)$-plane in $B$. In this case, there is exactly one nontrivial line bundle over $X$. When working globally, however, with $M = \mathbb{R}^n$, we allow $\Gamma$ to have more than one connected component, and in this situation there may be many nontrivial line bundles over $X$. However, only one of them detects every component of $\Gamma$. We recall in Section~\ref{topology} that the space of line bundles over $X$ is in bijection with the elements of $\operatorname{Hom}(H_1(X),\Z_2)\simeq H^1(X,\Z_2)$. Among these there is a unique bundle such that the nodal set of any smooth section necessarily intersects $B_r(p)$ for every $p \in \Gamma$ and $r > 0$; we call this the \emph{spanning bundle} over $X$. Equivalently, the spanning bundle corresponds to the element of $H^1(X,\mathbb{Z}_2)$ which sends every set of generators for $H_1(X)$ to the identity. When working globally, with $M = \mathbb{R}^n$, we always take $L \to X$ to be the spanning bundle. We will show that, as $\varepsilon \to 0$, critical sections of this bundle with bounded energy concentrate their energy on a stationary $(n-1)$-varifold whose support contains $\Gamma$. This is to be expected since, for small $\varepsilon$, most of the energy of a critical section lies in a small neighbourhood of its nodal set, and for sections of the spanning bundle any such neighbourhood contains $\Gamma$. 

Given a critical section $u$ for $E_{\varepsilon}$, we define a corresponding energy measure
    \[\mu := \varepsilon \frac{|\nabla u|^2}{2} + \frac{W(u)}{\varepsilon}\,dx.\]
Following \cite{HT}, we also associate with $u$ a diffuse $(n-1)$-varifold $V$ on $M$, by taking a weighted average over smooth level sets of $|u|$. We defer the precise definition of $V$ to Section~\ref{critical sections}, but record here that the mass of $V$ is given by 
    \[\|V\| = \frac{1}{\sigma} \sqrt{\frac{W(u)}{2}} |\nabla u| \, dx,\]
where $2\sigma$ is the Allen--Cahn energy of the 1-dimensional heteroclinic solution $x \mapsto \tanh(\frac{x}{\varepsilon \sqrt{2}})$. 

Our first main result is the following. 

\begin{theorem}\label{main critical}
Let $B \subset \mathbb{R}^n$ be an open ball and set $X = B \setminus \Gamma$. We assume $X$ is diffeomorphic to the complement of an $(n-2)$-plane in $\mathbb{R}^n$, and denote by $L$ the nontrivial real line bundle over $X$. Fix a sequence $\varepsilon_k \to 0$, and for each $k$ suppose $u_k$ is a critical section of $L$ for the energy $E_{\varepsilon_k}$. In addition, suppose there are constants $C_0$ and $C_1$ such that
\begin{equation}\label{uniform bounds}
    \sup_k \bigg(\sup_X |u_k|\bigg) \leq C_0, \qquad \sup_k E_{\varepsilon_k}(u_k, X) \leq C_1.
\end{equation}
Possibly after passing to a subsequence, we have the following behaviour:
\begin{enumerate}[(i)]
    \item The varifolds $V_k$ associated to the sequence $u_k$ weak*-converge to an integer rectifiable $(n-1)$-varifold $V$ on $B$. Moreover, $V$ is stationary with respect to vector fields $g \in C^1_0(B, \mathbb{R}^n)$ such that $g|_\Gamma$ is tangent to $\Gamma$.
    \item The renormalised energy measures $\mu_k/2\sigma$ weak*-converge to $\|V\|$. 
    \item $\Gamma \cap B \subset \supp\|V\|$.
    \item The density of $\|V\|$ satisfies 
        \begin{equation}\label{density intro} C^{-1} \leq \Theta^{n-1}(\|V\|,p)\leq C\bigg(1+\frac{1}{\dist(p,\partial B)^{n-1}}\bigg)\end{equation}
    for every $p \in \supp\|V\|$, where $C=C(n,\Gamma,C_0, C_1)$. In particular, $\|V\|(\Gamma) = 0$. 
    \item\label{hausdorff} For each $b \in (0,1)$, the sublevel sets $\{|u_k| \leq 1-b\}$ Hausdorff-converge to $\supp\|V\|$ in every compact subset of $B$. Moreover, $|u_k| \to 1$ in $C^0_{\loc}(X \setminus \supp\|V\|)$.
\end{enumerate}
\end{theorem}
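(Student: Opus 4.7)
The plan is to first invoke the interior convergence theory of Hutchinson--Tonegawa on $X = B \setminus \Gamma$, then promote its conclusions across $\Gamma$ by proving upper density bounds up to the boundary. On any simply-connected $\Omega \Subset X$ the bundle $L$ trivialises, so $u_k|_\Omega$ may be treated as a scalar function and \cite{HT} applies directly. A diagonal argument over a simply-connected exhaustion of $X$, combined with the energy bound, yields a subsequence along which $V_k \mres \Omega$ converges weak* to an integer rectifiable stationary $(n-1)$-varifold, $\mu_k/2\sigma \to \|V\|$ on $\Omega$, and $|u_k| \to 1$ in $C^0(\Omega \setminus \supp\|V\|)$, for every such $\Omega$. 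Since $E_{\varepsilon_k}(u_k,X) \leq C_1$, we may extract a further subsequence so that $\mu_k/2\sigma \to \mu$ weak* as Radon measures on $B$. The remaining tasks are to show that $\mu=\|V\|$ on $B$, to upgrade stationarity, to locate $\Gamma$ inside $\supp\|V\|$, and to obtain the density and Hausdorff statements.

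The crux is the upper density estimate (iv). At interior points it follows from the usual monotonicity identity for critical sections, derived via the stress-energy tensor. Near a point $p\in\Gamma$ I would run the same monotonicity on balls $B_r(p)$ with $r \lesssim \dist(p,\partial B)$ and show that the extra boundary term produced by $\Gamma \cap B_r(p)$ is absorbable because $\Gamma$ has codimension two: the $(n-1)$-dimensional mass the radial vector field can pick up from $\Gamma$ is controlled by $\mathcal{H}^{n-2}(\Gamma)\cdot r$, giving a uniform bound $\mu(B_r(p)) \leq C r^{n-1}$ with $C$ of the form $C(n,\Gamma,C_0,C_1)(1+\dist(p,\partial B)^{-(n-1)})$. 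Since $\mathcal{H}^{n-1}(\Gamma\cap B)=0$, this forces $\mu(\Gamma)=0$, and hence $\mu = \|V\|$ as measures on $B$; in particular $V$ extends to an integer rectifiable varifold on $B$.

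For stationarity against $g \in C^1_0(B,\mathbb{R}^n)$ with $g|_\Gamma$ tangent to $\Gamma$, I would cut off: let $\chi_\eta$ vanish on a $(\eta/2)$-neighbourhood of $\Gamma$, equal $1$ outside the $\eta$-neighbourhood, with $|\nabla\chi_\eta| \leq C\eta^{-1}$. Interior stationarity yields $\delta V(\chi_\eta g)=0$, so
\[\delta V(g) = \int \langle g, \nabla\chi_\eta\rangle \cdot(\text{projection})\, d\|V\| + \int (1-\chi_\eta)(\cdots)\, d\|V\|.\]
Tangency gives $|g(x)|\leq C\dist(x,\Gamma)$ in the tubular neighbourhood, so the integrand in the first term is bounded uniformly while the region of integration has $\|V\|$-measure $O(\eta)$ by the upper density bound and $\dim \Gamma = n-2$; the second term vanishes because $\|V\|(\Gamma)=0$. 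Taking $\eta \to 0$ yields $\delta V(g)=0$.

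Statement (iii) follows from the nontriviality of $L$: for each $p \in \Gamma$ and each $r$ small, $L$ is nontrivial over $B_r(p)\setminus \Gamma$, so $u_k$ must vanish in $B_r(p)$. A one-dimensional energy comparison (slicing transversally to $\Gamma$) gives $\mu_k(B_r(p)) \geq c(n) r^{n-1}$ uniformly for $\varepsilon_k \ll r$, hence $p \in \supp\|V\|$ and the lower density bound in (iv) holds on $\Gamma$. The interior lower bound in (iv) is the standard clearing-out consequence of monotonicity. Finally, (v) follows by combining the $C^0_{\loc}(X\setminus\supp\|V\|)$ convergence $|u_k|\to 1$ from \cite{HT} with the lower density bound, which rules out $\{|u_k|\leq 1-b\}$ from accumulating off $\supp\|V\|$. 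The principal obstacle is the boundary upper density estimate: the classical Hutchinson--Tonegawa monotonicity presumes interior stationarity of $V_k$ on the test ball, and one must carefully bound the extra contribution from balls meeting $\Gamma$, exploiting $\dim\Gamma = n-2$ to keep the error of strictly lower order.
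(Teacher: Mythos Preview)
Your overall architecture is sound, and your cutoff argument for stationarity (once the upper density bound is in hand) is a legitimate alternative to the paper's direct route via the first-variation identity for the $u_k$. The real gap is in the step you yourself flag as the principal obstacle: the boundary upper density estimate.

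You propose to control the ``extra boundary term'' from $\Gamma$ by dimension counting, since $\dim\Gamma = n-2$. This is not enough, because the difficulty is not measure-theoretic but analytic. The stress-energy tensor of $u_k$ involves $|\nabla u_k|^2$, and there is no a~priori reason for $|\nabla u_k|$ to stay bounded near $\Gamma$; indeed, the paper's example at the end of the introduction shows that for a genuine critical section the discrepancy $\xi_k = \varepsilon_k\tfrac{|\nabla u_k|^2}{2} - \tfrac{W(u_k)}{\varepsilon_k}$ is \emph{unbounded from above} as one approaches $\Gamma$. So when you integrate $\Div T = 0$ over $B_r(p)\cap\{\rho>\delta\}$ and try to send $\delta\to 0$, the boundary integral over $\{\rho=\delta\}$ does not vanish by a naive ``codimension-two'' argument; and even after you derive an almost-monotonicity inequality, the discrepancy error term cannot be handled by the Hutchinson--Tonegawa pointwise bound, which is an interior estimate only.

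What the paper actually does is prove sharp pointwise boundary derivative estimates (Proposition~\ref{boundary derivatives}) by lifting $u_k$ through the map $z\mapsto z^2$ on normal discs; the lift extends across $\Gamma$ to a smooth odd function, and elliptic theory gives $|\nabla u_k|^2 \lesssim \varepsilon_k^{-1}\rho^{-1}$ and $|u_k|^2 \lesssim \varepsilon_k^{-1}\rho$ for $\rho \lesssim \varepsilon_k$. These rates are exactly what is needed to (a) kill the boundary term at $\{\rho=\delta\}$ in the first-variation identity for vector fields tangent to $\Gamma$, and (b) show that $\int_{\{\rho\leq R\varepsilon_k\}}\xi_{k,+} = O(\varepsilon_k)$, so that $\xi_{k,+}\to 0$ in $L^1_{\loc}(B)$ and not merely in $L^1_{\loc}(X)$. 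With this in place the almost-monotonicity becomes a genuine monotonicity for $\mu$, yielding your upper density bound and hence $\mu(\Gamma)=0$; the rest of your outline then goes through essentially as you wrote it. Without some substitute for these boundary estimates, the scheme stalls at exactly the point you identified.
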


By a covering argument, Theorem~\ref{main critical} and the interior theory developed in \cite{HT} imply a corresponding global statement for critical sections of the spanning bundle.

\begin{theorem}\label{main critical global}
Suppose we are in the setting of Theorem~\ref{main critical}, but where the $u_k$ are now sections of the spanning bundle over $X = \mathbb{R}^n\setminus \Gamma$. We have all of the same conclusions, but with $B$ replaced by $\mathbb{R}^n$, and where the term involving $\dist(p, \partial B)$ does not appear in \eqref{density intro}. In addition, $\supp\|V\|$ is compact. 
\end{theorem}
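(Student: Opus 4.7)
The plan is to combine Theorem~\ref{main critical}, applied on a finite cover of $\Gamma$ by small balls, with the interior Hutchinson--Tonegawa theory in the complement. Since $\Gamma$ is a compact smooth codimension-two submanifold, there exists $r_0 > 0$ such that, for every $p \in \Gamma$ and every $r \leq r_0$, the set $B_r(p) \setminus \Gamma$ is diffeomorphic to the complement of an $(n-2)$-plane in $\R^n$, and the restriction of the spanning bundle is the unique nontrivial line bundle on that complement. Fix a finite cover $\{B^{(j)}\}_{j=1}^N$ of $\Gamma$ by balls of this type. The uniform energy bound gives $\|V_k\|(\R^n) \leq C_1/(2\sigma)$, so by standard compactness for Radon measures and varifolds we pass to a subsequence along which $V_k \rightharpoonup V$ and $\mu_k/(2\sigma) \rightharpoonup \mu_\infty$ weakly* on $\R^n$.

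On each $B^{(j)}$, Theorem~\ref{main critical} identifies $V|_{B^{(j)}}$ as integer rectifiable with $\mu_\infty = \|V\|$, stationary against vector fields tangent to $\Gamma$ and compactly supported in $B^{(j)}$, and satisfying the density bounds and Hausdorff convergence stated there. On any ball $B \subset \R^n \setminus \Gamma$ the spanning bundle trivialises, so after identifying $u_k$ with a real-valued function we may apply the interior theory of \cite{HT} to obtain the same conclusions, with full (unrestricted) stationarity. Rectifiability, integrality, and the conclusions (ii), (iii), (v) are local properties and follow immediately. For the stationarity claim in (i), given $g \in C^1_0(\R^n, \R^n)$ with $g|_\Gamma$ tangent to $\Gamma$, subordinate a partition of unity to the open cover consisting of the $B^{(j)}$ and a locally finite ball cover of $\R^n \setminus \Gamma$; splitting $g$ accordingly, each summand is either compactly supported in $\R^n \setminus \Gamma$ or in some $B^{(j)}$, and in each case the local theorems give the required vanishing of the first variation; summing yields $\delta V(g) = 0$. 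For the density upper bound, apply the local estimate of Theorem~\ref{main critical} on a sequence of balls $B_R(0)$ with $R \to \infty$ containing $\Gamma \cup \{p\}$; since the constant depends only on $n,\Gamma,C_0,C_1$, the term $1/\dist(p,\partial B_R)^{n-1}$ tends to zero and we conclude $\Theta^{n-1}(\|V\|,p) \leq C$.

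Compactness of $\supp\|V\|$ follows from the density lower bound in (iv) combined with the Euclidean monotonicity formula on $\R^n \setminus \Gamma$, where $V$ is stationary without boundary contribution: if $p \in \supp\|V\|$ satisfies $\dist(p,\Gamma) = R$, then
\[\omega_{n-1} C^{-1} R^{n-1} \leq \omega_{n-1}\Theta^{n-1}(\|V\|,p) R^{n-1} \leq \|V\|(B_R(p)) \leq \frac{C_1}{2\sigma},\]
bounding $R$ and hence placing $\supp\|V\|$ in a bounded neighbourhood of $\Gamma$. The one step requiring genuine care is the partition-of-unity argument for stationarity: the test field $g$ may have a nontrivial component tangent to $\Gamma$, but this is precisely the class of fields permitted by the local statement of Theorem~\ref{main critical}, so no compatibility issue arises between the two kinds of local stationarity. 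The remaining ingredients---varifold compactness, partitions of unity, and monotonicity---are standard.
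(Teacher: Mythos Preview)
Your overall strategy---a finite cover of $\Gamma$ by small balls where Theorem~\ref{main critical} applies, combined with the interior theory of \cite{HT} elsewhere, glued via a partition of unity for stationarity, and monotonicity for compactness of $\supp\|V\|$---is exactly the approach the paper takes (the paper merely says ``by a covering argument'' and, for compactness, ``this follows easily from Lemma~\ref{limit monotonicity} and Lemma~\ref{limit density}, since $\|V\|$ is bounded'').

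There is, however, a genuine gap in your argument for the \emph{density upper bound}. You propose to apply Theorem~\ref{main critical} on balls $B_R(0)$ with $R \to \infty$ containing all of $\Gamma \cup \{p\}$ and then send the boundary term $1/\dist(p,\partial B_R)^{n-1}$ to zero. But Theorem~\ref{main critical} carries the hypothesis \eqref{local}: the domain $B \setminus \Gamma$ must be diffeomorphic to the complement of an $(n-2)$-plane. A large ball containing all of $\Gamma$ (which may have several components, or be knotted, etc.) does not satisfy this, so Theorem~\ref{main critical} simply does not apply there. The fix is to stay with your finite cover $\{B^{(j)}\}$: refine it so that every point of a fixed tubular neighbourhood of $\Gamma$ lies in the half-radius ball of some $B^{(j)}$; then $\dist(p,\partial B^{(j)}) \geq r_0/2$ for that $j$, and the density bound from Theorem~\ref{main critical} yields a uniform constant over the whole cover (there being only finitely many balls). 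For $p \in \supp\|V\|$ outside this neighbourhood, the interior monotonicity formula for $V$ (stationary in $\R^n \setminus \Gamma$) gives the upper bound directly, since one can take $r$ up to $\dist(p,\Gamma)$ and $\|V\|(\R^n)$ is finite. Alternatively---and this is closer to what the paper has in mind---one observes that the proofs of Proposition~\ref{energy monotonicity} and Lemma~\ref{local energy bounds} go through verbatim when $X = \R^n \setminus \Gamma$, with the constraint $r < \dist(p,\partial B)$ simply absent.
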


With these theorems established, we turn to the regularity of the energy concentration set $\supp\|V\|$. We restrict ourselves to the case of sections which are stable or have bounded Morse index. Let us define these terms. Given a critical section $u$ for $E_\varepsilon$, the second variation of $E_\varepsilon$ at $u$ is given by
    \[\frac{d^2}{dt^2}\bigg|_{t=0} E_\varepsilon(u + t\varphi, X) = \int_\Omega \varepsilon |\nabla \varphi|^2 + \frac{2}{\varepsilon}\langle u, \varphi\rangle^2 + \frac{(|u|^2 - 1)}{\varepsilon}|\varphi|^2\]
for each $\varphi \in W^{1,2}_{0}(X,L)$. For each precompact open $\Omega \subset X$, the subspace of sections in $W^{1,2}_{0}(\Omega,L)$ such that
    \[\frac{d^2}{dt^2}\bigg|_{t=0} E_\varepsilon(u + t\varphi) < 0\]
is finite-dimensional, and its dimension is the Morse index of $u$ in $\Omega$, denoted $\ind(u, \Omega)$. For an arbitrary open $\Omega \subset X$, we define $\ind(u, \Omega)$ to be the supremum over $\ind(u,\Omega')$ for all precompact open $\Omega' \subset \Omega$. We say that $u$ is stable in $\Omega$ if $\ind(u, \Omega) = 0$. If $u_k$ is as in Theorem~\ref{main critical}, and in addition $\ind(u_k, \Omega)$ is bounded indpendently of $k$, then optimal regularity of $\supp\|V\|$ in $\Omega$ follows from \cite{TW} and \cite{Guaraco}. Alternatively, when $n =3$, smoothness of $\supp\|V\|$ can be deduced from \cite{Chodosh-Mantoulidis}. 

\begin{corollary}\label{stable regularity}
Let $u_k$ and $V$ be as in Theorem~\ref{main critical} and fix an open subset $\Omega \subset X$. Suppose $\ind(u_k, \Omega)$ is bounded independently of $k$. Then $\supp \|V\| \cap \Omega$ is a smooth embedded minimal hypersurface outisde a set of Hausdorff dimension at most $n-8$. 
\end{corollary}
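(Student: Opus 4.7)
The plan is to reduce the statement to the standard interior Allen--Cahn regularity theory, exploiting the fact that $\Omega \subset X = B \setminus \Gamma$ has positive distance from $\Gamma$ locally. The conclusion is local, so it suffices to exhibit, for each $p \in \supp\|V\| \cap \Omega$, a neighbourhood $U$ of $p$ in which $\supp\|V\|$ is a smooth embedded minimal hypersurface outside a set of Hausdorff dimension at most $n-8$. Choosing $U = B_r(p)$ with $r$ small enough that $U \Subset \Omega$, the ball $U$ is simply connected, and hence $L|_U$ is trivializable.

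Next I would trivialize: pick a unit parallel section $e$ of $L|_U$ and write $u_k|_U = v_k e$ for smooth real-valued functions $v_k : U \to \mathbb{R}$. Because $e$ is parallel and of unit length, one has $|\nabla u_k| = |\nabla v_k|$, $W(u_k) = W(v_k)$, and the Euler--Lagrange equation for $u_k$ reduces to the classical scalar Allen--Cahn equation $\varepsilon_k^2 \Delta v_k = (v_k^2 - 1) v_k$. Writing test sections as $\varphi = \psi e$, the second-variation quadratic forms also agree, so that $\ind(v_k, U) \leq \ind(u_k, \Omega)$, which is bounded uniformly in $k$ by hypothesis. Similarly, one checks that the bundle-valued diffuse varifold defined in Section~\ref{critical sections} coincides in $U$ with the scalar Hutchinson--Tonegawa varifold associated to $v_k$, since both are built from smooth level sets of $|u_k| = |v_k|$ with identical weights.

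With this reduction in place I would invoke the interior regularity theory of Tonegawa--Wickramasekera \cite{TW} together with its extension to sequences of uniformly bounded Morse index, as used in \cite{Guaraco} (or \cite{Chodosh-Mantoulidis} when $n=3$). This yields directly that $\supp\|V\| \cap U$ is a smooth embedded minimal hypersurface away from a closed set of Hausdorff dimension at most $n-8$. Covering $\supp\|V\| \cap \Omega$ by countably many such balls and using the fact that a countable union of sets of Hausdorff dimension at most $n-8$ has the same dimension bound yields the global conclusion.

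The main obstacle here is not analytic but a matter of careful bookkeeping: one must verify that the line-bundle versions of the energy, Euler--Lagrange equation, second variation, and associated diffuse varifold all collapse to their scalar counterparts under a local trivialization, so that the regularity statements of \cite{TW, Guaraco} apply verbatim. Once this identification is established, no genuinely new boundary analysis near $\Gamma$ is required, because $\Omega$ is by assumption disjoint from $\Gamma$.
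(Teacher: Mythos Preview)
Your proposal is correct and takes essentially the same approach as the paper. The paper does not give a separate proof of this corollary but simply states it as an immediate consequence of \cite{TW} and \cite{Guaraco}; your write-up spells out the local trivialization step (reducing the bundle-valued problem on a small ball $U\Subset\Omega$ to the scalar Allen--Cahn setting) that underlies this reduction.
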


We will see that, when $n = 3$, if the sequence $u_k$ is minimizing then $\supp\|V\|$ is also smooth at boundary points. We do not know whether the same is true for sequences of stable sections, but this seems unlikely without further conditions on the boundary curve. However, when $n=3$, for sections with bounded Morse index we show that the structure of $V$ at boundary points is severely restricted. To state the result we note that, as a consequence of \eqref{density intro}, at each point in its support the limiting varifold $V$ admits a varifold tangent (this is proven below in Lemma~\ref{tangent cone}). 

\begin{theorem}\label{stable boundary cone}
Let $u_k$ and $V$ be as in Theorem~\ref{main critical}. Suppose in addition that $n = 3$ and that $\ind(u_k, X)$ is bounded independently of $k$. Consider a point $p \in \Gamma$. Then every varifold tangent to $V$ at $p$ is of the form $\sum_{j = 1}^N m_j V_{P_j}$ for some $N \in \mathbb{N}$, where the $P_j$ are haflplanes meeting along $T_p \Gamma$, $V_{P_j}$ is the unit-multiplicity varifold induced by $P_j$, and the $m_j$ are positive integers such that $\sum_{j=1}^N m_j$ is odd.
\end{theorem}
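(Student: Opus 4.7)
The plan is to fix $p \in \Gamma$ and construct a varifold tangent $V^*$ at $p$ by blowing up $V$. For $r_\ell \to 0$, set $V_\ell := (\eta_{p,r_\ell})_\# V$; the density bound in Theorem~\ref{main critical}(iv) gives a uniform local mass bound on the $V_\ell$, so a subsequence converges weakly to a varifold $V^*$ on $\mathbb{R}^3$. Because $V$ is stationary with respect to vector fields tangent to $\Gamma$ and $\Gamma$ is smooth, the rescalings of $\Gamma$ flatten to $T_p\Gamma$ and $V^*$ inherits stationarity with respect to vector fields tangent to $T_p\Gamma$, and is genuinely stationary on $\mathbb{R}^3 \setminus T_p\Gamma$.

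Next I would show $V^*$ has the claimed structure. A boundary-type monotonicity formula (an adaptation of Allard's, using test vector fields of the form $(x - p)$ corrected to be tangent to $\Gamma$ along $\Gamma$) gives dilation invariance of $V^*$, so $V^*$ is a cone at the origin. To obtain translation invariance along $T_p\Gamma$, I would apply monotonicity at $0$ and at each $q \in T_p\Gamma \setminus \{0\}$: a diagonal argument identifies a varifold tangent $V^{**}$ to $V^*$ at $q$ as a blowdown of $V$ centered at a sequence of points on $\Gamma$ tending to $p$, from which $\Theta^2(\|V^*\|, q) = \Theta^2(\|V^*\|, 0)$, and monotonicity at $q$ then forces $V^*$ to also be a cone centered at $q$. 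A varifold in $\mathbb{R}^3$ coned at two distinct points on a line is translation-invariant along that line, so $V^* = V_C \times T_p\Gamma$ where $V_C$ is a stationary integer-rectifiable $1$-cone in $T_p\Gamma^\perp \cong \mathbb{R}^2$ (integer rectifiability via Theorem~\ref{main critical}(i)). Such a $1$-cone is a finite sum $\sum_{j=1}^N m_j V_{R_j}$ of rays $R_j$ from the origin with positive integer multiplicities $m_j$, and crossing with $T_p\Gamma$ gives $V^* = \sum_{j=1}^N m_j V_{P_j}$ with $P_j$ half-planes meeting along $T_p\Gamma$.

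The parity $\sum_j m_j \equiv 1 \pmod 2$ is a topological consequence of $L$ being nontrivial. Fix a small $2$-disk $D_\rho \subset T_p\Gamma^\perp$ transverse to $\Gamma$ at $p$; since $L|_{\partial D_\rho}$ is the M\"obius bundle over $S^1$, every smooth section of $L$ has an odd number of transverse zeros on $\partial D_\rho$. The Morse index bound, combined with the curvature estimates of \cite{Chodosh-Mantoulidis}, gives smooth convergence of $u_k$ away from $\supp\|V\|$ and a finite set of bad points. Running a diagonal blowup at $p$ (with $\rho = \rho_\ell \to 0$ paired with a subsequence of $u_k$), the zeros of $u_k|_{\partial D_{\rho_\ell}}$ --- after rescaling to unit scale --- form a finite set clustering near the $N$ points $P_j \cap \partial D_1$. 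A local analysis near each multiplicity-$m_j$ interface, via the Hutchinson--Tonegawa profile characterization, shows the number of transverse zeros clustering near $P_j \cap \partial D_1$ has the same parity as $m_j$, giving $\sum_j m_j \equiv 1 \pmod 2$.

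The hard part will be the final parity step: matching transverse zero counts to multiplicities $m_j$ modulo $2$ uniformly through the blowdown. One must show that near each multiplicity-$m_j$ interface the section $u_k$ is well-approximated by an $m_j$-sheeted stacked $1$-d profile, whose transverse zero count on a normal line has parity $m_j \pmod 2$; the Morse index bound is essential here, via \cite{Chodosh-Mantoulidis}, to rule out pathological oscillation of the nodal set and obtain smooth convergence away from a discrete bad set.
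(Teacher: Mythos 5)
Your blow-up set-up and your parity strategy (count transverse zeros of the sections on a loop linking $T_p\Gamma$, use that the pullback of $L$ to such a loop is the M\"obius bundle) are in the same spirit as the paper. But there is a genuine gap in the structure step, before parity ever enters: you try to obtain the product structure $V^* = V_C \times T_p\Gamma$ by purely varifold-theoretic means, via the claim $\Theta^2(\|V^*\|,q) = \Theta^2(\|V^*\|,0)$ for $q \in T_p\Gamma\setminus\{0\}$ and coneness at two points. That density equality is not justified by your diagonal argument (upper semicontinuity only gives an inequality, and densities of $V$ at nearby points of $\Gamma$ need not coincide with the density at $p$), and the implication you want is false at this level of generality: take the cone consisting of a multiplicity-one halfplane bounded by $T_p\Gamma$ together with a multiplicity-one plane through the origin transverse to $T_p\Gamma$. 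This is an integer rectifiable cone with vertex $0$, stationary away from $T_p\Gamma$, with bounded density ratios, yet it is not translation invariant along $T_p\Gamma$, and its density is $3/2$ at the vertex but $1/2$ at every other point of the line. So nothing you have used up to that point can yield the splitting; some input beyond ``cone + stationary off the line'' is required. The paper supplies exactly this by invoking the index bound already at the structure stage: the tangent is realised as the concentration varifold of rescaled sections $\tilde u_i$ with the same Morse index bound, so Corollary~\ref{stable regularity} makes $\supp\|C_pV\|$ a smooth embedded minimal surface away from $T_p\Gamma$; coneness then forces its link to be a union of geodesics in $S^2$, embeddedness plus the fact that sections of the nontrivial bundle must vanish on every loop linking $T_p\Gamma$ rule out the single-great-circle (full plane) case, and the constancy theorem gives the integer multiplicities on the resulting halfplanes. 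You should either import that regularity input at this stage or find a genuinely different argument; the density route cannot work as stated.

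On the parity step, your outline matches the paper's strategy, and you correctly identify the hard point: showing that the number of transverse zeros near each interface has the parity of $m_j$. The paper proves more, namely that the count equals $m_j$ exactly, and does so as follows: for each halfplane it places $I_0+1$ disjoint small cylinders away from $T_p\Gamma$ and pigeonholes to get a cylinder in which $\tilde u_i$ is stable; the Chodosh--Mantoulidis curvature estimate then makes the nodal set there a union of $\tilde m_i$ graphs converging in $C^{1,\alpha}$ to the halfplane, and an energy-quantization argument (exponential decay of energy away from the nodal set plus the classification of stable entire solutions in $\mathbb{R}^3$) pins down $\tilde m_i = m_j$. A loop chosen to cross each such cylinder monotonically, and avoiding zeros elsewhere because $|\tilde u_i|\to 1$ locally uniformly off the halfplanes, then has exactly $m_1+\dots+m_N$ zeros, which must be odd. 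Your appeal to a ``Hutchinson--Tonegawa profile characterization'' would need to be replaced by an argument of this kind; also be careful that your use of \cite{Chodosh-Mantoulidis} requires stability, which the index bound only gives after a localisation of the above sort (and only away from $\Gamma$), not globally with merely a finite bad set near the boundary.
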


Note that, by Allard's boundary regularity theorem \cite{Allard_boundary}, when $N = 1$ in Theorem~\ref{stable boundary cone}, $\supp\|V\|$ is a smooth surface with boundary in a neighbourhood of $p$. This fact will play a role in our results concerning \emph{minimizing} sections and Plateau's problem, to which we now turn. 

\subsection{Minimizing sections and Plateau's problem} Plateau's problem is to establish the existence of a surface whose area is minimal among all those which span a given curve in $\mathbb{R}^3$. Lagrange posed the problem in 1760, and Plateau later demonstrated that solutions arise experimentally as soap films clinging to a wire frame. Plateau's problem is far more subtle than it appears; to even formulate it precisely, appropriate notions of surface and area must be decided upon, and one must specify what it means for a surface to span a given curve. Over the last century a number of frameworks have been developed to solve Plateau's problem and its generalisations to higher dimensions. We refer to Section~2 of \cite{DeLellis} for an extensive overview of these different approaches, and point to the particularly relevant references \cite{Douglas, Rado, Reifenberg, Federer-Fleming, Fleming, Hardt-Simon, Harrison-Pugh, Almgren, DGM, KMS}.

Theorems~\ref{main critical global} and \ref{stable boundary cone} allow us to complete a new proof (originally suggested by Fr\"{o}hlich and Struwe in \cite{Struwe}) that the \textit{smooth version} of Plateau's problem admits a solution for every smooth closed curve $\Gamma$ in $\mathbb{R}^3$. That is, we show there is a surface $\Sigma$, smooth up to its boundary, such that $\partial \Sigma = \Gamma$ and the $2$-dimensional Hausdorff measure of $\Sigma$ is minimal among all surfaces in this class. This result was originally established using the theory of flat chains mod~2 \cite{Fleming} and Allard's regularity theorems \cite{Allard, Allard_boundary}. In the approach taken here, we instead consider sections of the spanning bundle over $X = \mathbb{R}^3 \setminus \Gamma$ which minimize the Allen--Cahn energy $E_\varepsilon$. We demonstrate that, in the limit as $\varepsilon \to 0$, the desired Plateau solution arises as the set where these minimizers concentrate energy. We still rely on Allard's theorems to prove boundary regularity, but we expect this to be achievable using PDE arguments (as in the interior case \cite{Wang-Wei_a, Chodosh-Mantoulidis}) leading to a resolution of Plateau's problem that relies only on level-set estimates for solutions of semilinar elliptic equations.

Let us now state our results concerning Plateau's problem in more detail. A section $u \in W^{1,2}_{\loc}(X, L)$ minimizes the energy $E_\varepsilon$ if 
    \[E_\varepsilon(u) = \inf \big\{ E_{\varepsilon}(v) : v \in W^{1,2}_{\loc}(X, L)\big\}.\]
Every minimizing section is also critical and hence smooth. In all dimensions, a minimizing section exists for every $\varepsilon > 0$ (this was proven in \cite{Struwe}, and we recap the argument in Section~\ref{minimizers}).

\begin{theorem}\label{Plateau theorem}
Consider a smooth closed curve $\Gamma \subset \mathbb{R}^3$. Fix a sequence $\varepsilon_k \to 0$, and let $u_k$ denote a sequence of sections of the spanning bundle over $X = \mathbb{R}^3\setminus\Gamma$ which minimize $E_{\varepsilon_k}$. The bounds \eqref{uniform bounds} then hold automatically, so (after passing to a subsequence) we may apply Theorem~\ref{main critical global} to extract a weak*-limit of the associated varifolds, denoted $V$. The set $\Sigma := \supp\|V\|$ is a smooth surface with boundary, $\partial \Sigma = \Gamma$, and $V$ is the unit-multiplicity varifold induced by $\Sigma$. Moreover, $\Sigma$ is area minimizing with respect to $\Gamma$. That is, $\Sigma$ solves the smooth version of Plateau's problem for $\Gamma$. 
\end{theorem}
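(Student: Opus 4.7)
The plan is to combine Theorems~\ref{main critical global} and \ref{stable boundary cone}, Corollary~\ref{stable regularity}, and Allard's boundary regularity theorem, with the comparison arguments intrinsic to the $\Gamma$-convergence of the Allen--Cahn energy. First I would verify that \eqref{uniform bounds} holds. The uniform $L^\infty$ bound $|u_k| \leq 1$ is a consequence of minimality, as the truncation $u_k \mapsto u_k \min(1, 1/|u_k|)$ is a $W^{1,2}_{\loc}$ section of $L$ that strictly decreases the energy on $\{|u_k| > 1\}$. For the energy bound, fix any smooth embedded Seifert surface $\Sigma_0$ with $\partial \Sigma_0 = \Gamma$ and carry out the standard Baldo--Orlandi construction: trivialise the spanning bundle $L$ over $\mathbb{R}^3 \setminus \Sigma_0$, place the one-dimensional heteroclinic profile transversally in a tubular neighbourhood of $\Sigma_0$, and extend by $\pm 1$ outside. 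This yields a comparison section $v_{\varepsilon_k}$ with $E_{\varepsilon_k}(v_{\varepsilon_k}) \leq 2\sigma\,\mathcal{H}^2(\Sigma_0) + o(1)$, so $E_{\varepsilon_k}(u_k) \leq C_1$ follows by minimality.

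With these bounds secured, Theorem~\ref{main critical global} produces the limit varifold $V$. Since minimizers are in particular stable, Corollary~\ref{stable regularity} gives that $\Sigma := \supp\|V\|$ is a smooth embedded minimal surface in $X = \mathbb{R}^3 \setminus \Gamma$, as the allowed singular set has Hausdorff dimension at most $n - 8 = -5$ and is therefore empty. At each $p \in \Gamma \subset \Sigma$, Theorem~\ref{stable boundary cone} provides varifold tangents of the form $\sum_{j=1}^N m_j V_{P_j}$ with $P_j$ half-planes hinged along $T_p\Gamma$ and $\sum_j m_j$ odd.

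The central step is to reduce this tangent-cone structure to $N = 1$ and $m_1 = 1$, so that Allard's boundary regularity theorem \cite{Allard_boundary} applies. Here the minimizing hypothesis is essential: the comparison construction above, applied to an arbitrary smooth competitor $\Sigma'$ with $\partial \Sigma' = \Gamma$, together with the lower semicontinuity built into (ii) of Theorem~\ref{main critical global}, gives
\[\|V\|(\mathbb{R}^3) \leq \liminf_k \frac{E_{\varepsilon_k}(u_k)}{2\sigma} \leq \mathcal{H}^2(\Sigma').\]
A localised version of the same comparison, obtained by gluing a heteroclinic profile over $\Sigma' \cap B_r$ to $u_k$ outside $B_r$, propagates the inequality to all balls, showing that $V$ is mass-minimizing relative to $\Gamma$. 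Blow-ups at $p$ inherit this property, and among cones $\sum m_j V_{P_j}$ with $\sum m_j$ odd the unique minimizer in any ball $B_r$ is a single half-plane of multiplicity one (mass $\pi r^2/2$); any other configuration has mass at least $3\pi r^2/2$. This forces $N = 1$ and $m_1 = 1$, so $\Sigma$ is a $C^{1,\alpha}$ surface-with-boundary near $p$ by \cite{Allard_boundary}, and smoothness follows from the minimal surface equation.

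To conclude, $\Sigma$ is a smooth embedded surface with $\partial \Sigma = \Gamma$ (the inclusion $\Gamma \subset \partial \Sigma$ is from the half-plane tangent structure, the reverse from interior smoothness on $\Sigma \setminus \Gamma$). The chain
\[\mathcal{H}^2(\Sigma) \leq \|V\|(\mathbb{R}^3) \leq \inf\{\mathcal{H}^2(\Sigma') : \partial \Sigma' = \Gamma\} \leq \mathcal{H}^2(\Sigma)\]
then consists of equalities (the rightmost using $\Sigma$ itself as a competitor); equality between $\mathcal{H}^2(\Sigma)$ and $\|V\|(\mathbb{R}^3)$ forces $V$ to have unit multiplicity $\mathcal{H}^2$-a.e. on $\Sigma$, hence everywhere by integer rectifiability. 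I expect the main obstacle to be the boundary blow-up step: formalising the local area-minimality of $V$ and propagating it to tangent cones, while keeping track of the odd-multiplicity constraint imposed by the nontriviality of the spanning bundle.
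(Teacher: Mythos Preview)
Your overall architecture matches the paper's: verify \eqref{uniform bounds} via truncation and a Seifert-surface competitor, apply Theorem~\ref{main critical global} and Corollary~\ref{stable regularity} for interior regularity, invoke Theorem~\ref{stable boundary cone} at boundary points, reduce the tangent cone to a single multiplicity-one halfplane, apply Allard, and conclude area-minimality by comparison. The paper carries out the first and last of these essentially as you describe (Lemmas~\ref{length}, \ref{global energy}, \ref{proof of plateau}), and it derives unit multiplicity of $V$ earlier, in Proposition~\ref{minimizer interior reg}, via the Hutchinson--Tonegawa cut-and-paste argument rather than from your final chain of equalities.

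The substantive gap is in your boundary step. You assert that the tangent cone $\sum_j m_j V_{P_j}$ minimizes mass among all such configurations with $\sum_j m_j$ odd, and then compare directly against a single halfplane. But the minimizing property of the rescaled sections $\tilde u_i$ only yields comparisons with sections having the \emph{same boundary values} on the boundary of a fixed region; a competitor whose nodal set is a single half-disc cannot match the trace of $\tilde u_i$ when that trace meets $\partial B_r$ in three or more arcs. Your phrase ``$V$ is mass-minimizing relative to $\Gamma$'' would have to be formulated as a statement about flat chains mod~2 for the halfplane comparison to be legitimate, and that is not what your localised gluing establishes. The paper instead proceeds in two concrete stages (Proposition~\ref{minimizer bdy reg}): first, the cut-and-paste argument applied to the rescaled minimizers forces each $m_j = 1$, so $N$ itself is odd; second, if $N \geq 3$ then two of the halfplanes meet at angle at most $2\pi/3$, and inside a large ball $B$ disjoint from $T_p\Gamma$ one replaces $(P_1 \cup P_2) \cap B$ by a necked surface $S$ with strictly smaller area but identical trace on $\partial B$, then builds a section competitor agreeing with $\tilde u_i$ outside $B$ and having strictly less energy. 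This neck construction, which respects the boundary data, is the missing ingredient in your reduction to $N=1$.
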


The smooth Plateau problem can also be posed in higher dimensions, where $\Gamma$ is now a compact codimension-two submanifold, and one seeks an area minimzing hypersurface $\Sigma$ with boundary $\partial \Sigma = \Gamma$. However even for $n = 4$ there may be no solutions. Suppose for example that $\Gamma$ is an embedded $\mathbb{RP}^2$ in $\mathbb{R}^4$. Since $\mathbb{RP}^2$ is not the boundary of any smooth 3-manifold, in this case $\Sigma$ must be singular on some subset of $\Gamma$. The situation is different, however, if $\Gamma$ lies in a strictly convex hypersurface---in this case there is a smooth solution if $4 \leq n \leq 7$ (see \cite[5.2]{Allard_boundary}). We also obtain a new proof of this statement. 

\begin{theorem}\label{Plateau theorem high dim}
Let $\Gamma$ denote a closed codimension-two submanifold of $\mathbb{R}^n$, $n \geq 4$. Fix a sequence $\varepsilon_k \to 0$, and let $u_k$ denote a sequence of sections of the spanning bundle over $X = \mathbb{R}^n \setminus \Gamma$ which minimize $E_{\varepsilon_k}$. The bounds \eqref{uniform bounds} then hold automatically, so (after passing to a subsequence) we may apply Theorem~\ref{main critical global} to extract a weak*-limit of the associated varifolds, denoted $V$. Let $\Sigma := \supp\|V\|$. We then have that $\Sigma \setminus \Gamma$ is a smooth minimal hypersurface outside a set of Hausdorff-dimension at most $n-8$, and $V$ is the unit-multiplicity varifold induced by $\Sigma$. Moreover, if $\Sigma'$ is any smooth hypersurface with boundary such that $\partial \Sigma' = \Gamma$, then 
    \[\mathcal H^{n-1}(\Sigma) \leq \mathcal H^{n-1}(\Sigma').\]
If we additionally assume that $\Gamma$ lies in a strictly convex hypersurface, then there is a tubular neighbourhood of $\Gamma$ in which $\Sigma$ is a smooth hypersurface with boundary, and $\partial \Sigma = \Gamma$. Consequently, if $4 \leq n \leq 7$ and $\Gamma$ lies in a strictly convex hypersurface, then $\Sigma$ is a smooth hypersurface with boundary which solves Plateau's problem for $\Gamma$. 
\end{theorem}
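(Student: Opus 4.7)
The proof decomposes into interior structure, area comparison, and boundary regularity under strict convexity, with the last being the main obstacle.

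Since minimizers are stable, $\ind(u_k, X) = 0$, and Corollary~\ref{stable regularity} applied with the ambient region equal to all of $X$ immediately gives that $\Sigma \setminus \Gamma$ is a smooth embedded minimal hypersurface away from a set of Hausdorff dimension at most $n-8$. To see that $V$ has unit multiplicity on the smooth part, I would invoke a sheet-cancellation argument: near any smooth point $p$ of $\Sigma \setminus \Gamma$ the sections $u_k$ are locally sections of the trivial bundle (a small ball about $p$ is simply connected and disjoint from $\Gamma$), and resemble $m$ parallel translates of the one-dimensional heteroclinic profile $\tanh(t/\varepsilon_k\sqrt{2})$ with alternating signs; if $m \geq 2$, a cutoff merging two adjacent sheets into the constant $\pm 1$ strictly reduces energy, contradicting minimality. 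Combined with integrality of $V$ (Theorem~\ref{main critical global}) and $\|V\|(\Gamma) = 0$, this gives that $V$ is the unit-multiplicity varifold induced by $\Sigma$.

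For the area comparison, given any smooth hypersurface-with-boundary $\Sigma'$ with $\partial \Sigma' = \Gamma$, I would construct a recovery sequence $w_k$ of sections of the spanning bundle via the standard Modica--Mortola template $w_k = \tanh(d_{\Sigma'}/\varepsilon_k\sqrt{2})$ in a tubular neighborhood of $\Sigma'$, extended by $\pm 1$ outside. The sign flip across $\Sigma'$ is consistent with the nontriviality of the bundle precisely because $\Sigma'$ spans $\Gamma$. A direct calculation gives $E_{\varepsilon_k}(w_k) = 2\sigma \mathcal{H}^{n-1}(\Sigma') + o(1)$, so minimality and lower semicontinuity, together with parts (ii) and (iv) of Theorem~\ref{main critical global}, yield $\mathcal{H}^{n-1}(\Sigma) \leq \|V\|(\mathbb{R}^n) \leq \mathcal{H}^{n-1}(\Sigma')$.

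The main obstacle is the boundary regularity under the convexity hypothesis. Suppose $\Gamma \subset K$ with $K$ a strictly convex hypersurface bounding a convex domain $U$; a standard maximum-principle argument using $K$ as a barrier gives $\supp\|V\| \subset \overline U$. The plan is to prove $\Theta^{n-1}(\|V\|, p) = 1/2$ at every $p \in \Gamma$, after which Allard's boundary regularity theorem supplies a smooth hypersurface-with-boundary near $p$. The lower bound $\Theta^{n-1}(\|V\|, p) \geq 1/2$ follows from the monotonicity formula applied to $V$ (stationary away from $\Gamma$ by Theorem~\ref{main critical global}(i)) combined with $p \in \Gamma \subset \supp\|V\|$. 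For the upper bound, strict convexity of $K$ allows the construction, for small $r > 0$, of a competitor that coincides with $\Sigma$ outside $B_r(p)$ and inside $B_r(p)$ equals a cap lying on $K$ (joined to $\Sigma$ by a thin collar near $\partial B_r(p)$) of $(n-1)$-area $\frac{1}{2}\omega_{n-1}r^{n-1} + o(r^{n-1})$; applying the area comparison locally gives $\|V\|(B_r(p)) \leq \frac{1}{2}\omega_{n-1}r^{n-1} + o(r^{n-1})$, hence $\Theta^{n-1}(\|V\|, p) \leq 1/2$. Equality forces a single half-plane tangent with multiplicity one, so Allard's theorem produces smoothness as a hypersurface with boundary $\Gamma$ throughout a tubular neighborhood of $\Gamma$. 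When $4 \leq n \leq 7$ the interior singular set has Hausdorff dimension at most $n-8 < 0$ and is therefore empty, so $\Sigma$ is globally a smooth hypersurface with boundary $\Gamma$ that solves Plateau's problem.
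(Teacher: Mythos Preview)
Your treatment of interior regularity, unit multiplicity, and the area comparison $\mathcal H^{n-1}(\Sigma)\le\mathcal H^{n-1}(\Sigma')$ matches the paper's route (Proposition~\ref{minimizer interior reg} and Lemma~\ref{proof of plateau}); the cut-and-paste multiplicity argument you sketch is exactly the one the paper cites from \cite{HT}, and your recovery sequence for $\Sigma'$ is the same construction as in Lemma~\ref{proof of plateau}.

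The substantive divergence is in the boundary step. The paper does \emph{not} use minimality here at all: once $\Gamma$ lies in a strictly convex hypersurface, the convex hull property for stationary varifolds forces $\supp\|V\|$ to lie on one side of the tangent hyperplane $T_pK$ at each $p\in\Gamma$, and Allard's own argument \cite[5.2]{Allard_boundary} then gives $\Theta^{n-1}(\|V\|,p)=1/2$ directly. Allard's boundary regularity theorem finishes. This is a purely varifold-theoretic statement about stationary varifolds with boundary on a convex hypersurface.

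Your density-upper-bound argument, by contrast, is circular as written. The competitor you describe, ``coinciding with $\Sigma$ outside $B_r(p)$ and equal to a cap on $K$ inside,'' is built out of $\Sigma$ itself, and at this stage you do not know that $\Sigma$ is a smooth hypersurface-with-boundary near $\Gamma$ (that is precisely what you are trying to prove). Hence the glued object is not a priori an admissible $\Sigma'$ for the global inequality $\mathcal H^{n-1}(\Sigma)\le\mathcal H^{n-1}(\Sigma')$, which was only established for \emph{smooth} hypersurfaces with $\partial\Sigma'=\Gamma$. The natural repair is to drop back to the Allen--Cahn level: modify $u_k$ inside $B_r(p)$ (not $\Sigma$), as in the proof of Proposition~\ref{minimizer bdy reg}, so that energy minimality of $u_k$ yields a local energy bound that passes to $\|V\|(B_r(p))$. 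Alternatively, and more cleanly, simply invoke \cite[5.2]{Allard_boundary} as the paper does, which needs only stationarity and the one-sidedness coming from convexity.
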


\subsection{Key steps in the proofs} Let us first describe some of the main steps in the proof of Theorem~\ref{main critical}. Here much of our analysis follows the work of Hutchinson and Tonegawa \cite{HT}, who dealt with the interior case, but there are key differences at the boundary. 

Suppose we are in the setting of Theorem~\ref{main critical}, and to ease notation let $\varepsilon_k = \varepsilon$ and $u = u_k$. In Section~\ref{critical sections}, we first recall (and generalise to higher dimensions) certain estimates from \cite{Struwe} which provide control on $u$ and its derivatives in a small tubular neighbourhood of $\Gamma$. In particular, writing $\rho(x)$ for the distance from $x$ to $\Gamma$, we have 
    \begin{equation}\label{bdy derivs intro}|u(x)|^2 \lesssim \varepsilon^{-1}\rho(x) \qquad \text{and} \qquad |\nabla u|^2 \lesssim \varepsilon^{-1} \rho(x)^{-1} \end{equation}
at points where $\rho(x) \lesssim \varepsilon$. Notice that the second estimate allows $|\nabla u|^2$ to blow up at a controlled rate as we approach $\Gamma$. These boundary estimates are then used to derive an almost-monotonicity formula for the rescaled energy
    \[\frac{1}{r^{n-1}}\int_{B_r(p)} \varepsilon \frac{|\nabla u|^2}{2} + \frac{W(u)}{\varepsilon}\,dx\]
in balls centered at a point $p \in \Gamma$. A similar formula for interior balls was derived in \cite{HT}. As in \cite{HT}, we find that the rescaled energy is not quite monotone in $r$, because of a term involving the \emph{discrepancy},  
    \[\xi := \varepsilon \frac{|\nabla u|^2}{2} - \frac{W(u)}{\varepsilon}\]

As a consequence of interior estimates proven in \cite{HT}, the discrepancy is bounded from above on compact subsets of $X$, and decays to $0$ in $L^1_{\loc}(X)$ as $\varepsilon \to 0$. To prove Theorem~\ref{main critical} we need more; it will be necessary to show that $\xi$ decays to $0$ in $L^1_{\loc}(B)$. This global statement is more delicate than its interior counterpart because, in our setting with boundary, $\xi$ is typically unbounded from above near $\Gamma$ for each $\varepsilon$ (see the example at the conclusion of this introduction). Despite this, by carefully combining an improved interior estimate for $\xi$ with the boundary estimates \eqref{bdy derivs intro}, we obtain the necessary $L^1_{\loc}$-decay over $B$. This ensures that the almost-monotonicity improves to a genuine monotonicity in the limit as $\varepsilon\to 0$. 

With this fact in hand, in Section~\ref{varifolds} we show that the diffuse varifolds associated with the sequence $u$ approach a limit which is stationary with respect to vector fields tangent to $\Gamma$ and, moreover, satisfies uniform upper and lower density bounds. The limiting varifold is then rectifiable by Allard's rectifiability theorem, and integral by an argument in \cite{HT}. This concludes our discussion of Theorem~\ref{main critical}. As mentioned above, Theorem~\ref{main critical global} follows from Theorem~\ref{main critical} and a covering argument. 

Theorem~\ref{stable boundary cone} is proven in Section~\ref{section dimension 3}. In the setting of that theorem, it is straightforward to show that every varifold tangent to $V$ at $p \in \Gamma$ is supported on a smooth minimal cone away from the tangent line $T_p \Gamma$. Since we are in ambient dimension 3, such a cone can only consist of a union of halfplanes meeting along $T_p \Gamma$, and we know that each of these occur with integer multiplicity by Theorem~\ref{main critical}. Showing that the sum of the multiplicities is odd is the most difficult step in the proof. Using the $C^{2}$-estimate for nodal sets of stable solutions due to Chodosh and Mantoulidis \cite{Chodosh-Mantoulidis}, we are able to show that the sum of the multiplicities is equal to the number of zeroes of $u_k$ on a small loop winding once around $\Gamma$, at least when $k$ is large. This number is odd for topological reasons---for example, one can see this by noting that the pullback of $L$ to the loop is the M\"{o}bius bundle. 

We now summarise the proofs of Theorem~\ref{Plateau theorem} and Theorem~\ref{Plateau theorem high dim}. These are given in Section~\ref{minimizers}. First, we use the direct method to show that for every $\varepsilon > 0$ and compact $(n-2)$-dimensional boundary $\Gamma$, there exists a smooth section $u$ of the spanning bundle $L$ over $X = \mathbb{R}^n \setminus \Gamma$ which minimizes $E_\varepsilon$. We show that these minimizers satisfy $|u| \leq 1$, and that their energy remains bounded as we send $\varepsilon \to 0$. Theorem~\ref{main critical global} can than be invoked to conclude that, after passing to a subsequence, the varifolds associated to $u$ weak*-converge to an integer rectifiable limit $V$. Moreover, the analysis we conduct in Section~\ref{critical sections} shows that $\Sigma := \|V\|$ contains $\Gamma$. The set $\Sigma \setminus \Gamma$ is smooth away from a set of Hausdorff dimension at most $n-8$ by Corollary~\ref{stable regularity}. 

Our claim concerning boundary regularity is that $\Sigma$ is a smooth hypersurface with boundary in a tubular neighbourhood of $\Gamma$, provided $\Gamma$ lies in a strictly convex hypersurface or $n = 3$. If $\Gamma$ lies in a strictly convex hypersurface, we can simply argue as in \cite[Section~5]{Allard_boundary}. For a general boundary curve in $\mathbb{R}^3$ the argument is more involved. By Theorem~\ref{stable boundary cone}, we know that for each $p \in \Gamma$, the limit $V$ has a varifold tangent supported on $N$ halfplanes. Since $V$ arises from a sequence of minimizers, a standard cut-and-paste argument shows that each of these occurs with unit multiplicity, so by Theorem~\ref{stable boundary cone}, $N$ is odd. We argue that an array of 3 or more halfplanes cannot arise from a sequence of minimizers, and so conclude that $N =1$. It then follows that $\Sigma$ is smooth in a neighbourhood of $p$, as a consequence of Allard's boundary regularity theorem \cite{Allard_boundary}. 

To complete the proofs of Theorem~\ref{Plateau theorem} and Theorem~\ref{Plateau theorem high dim}, it only remains to show that the support of the limiting varifold obtained from a sequence of minimizers has no more area than any smooth hypersurface with boundary $\Gamma$. This is easily proven by contradiction---if there is a competitor with less area, then the original sequence of sections could not have been minimizing, since one can construct a different sequence which has less energy. 

\subsection{An example} We conclude this introduction with the construction of a critical section for $E_\varepsilon$ which serves as an illustrative example; its nodal set is a half-line in $\mathbb{R}^2\setminus\{0\}$, and its discrepancy is unbounded from above. 

Let $L$ be the unique nontrivial line bundle over $\mathbb{R}^2\setminus\{0\}$. For each large $r > 0$ and small $\delta > 0$ we define domains
    \[A_{r, \delta} := \{\rho e^{i\theta} \in \mathbb{R}^2 : r^{-1} \leq \rho \leq r, \; |\theta| \geq \delta\}, \qquad A_r := A_{r,0}.\]
For each $\varepsilon > 0$ there is a minimizer of the Allen--Cahn energy in $W^{1,2}_0(A_{r,\delta})$. Let $w_{r,\delta}$ denote such a minimizer. Easy comparison arguments show that $0 < w_{r,\delta} \leq 1$ holds almost everywhere in $A_{r,\delta}$. Standard elliptic regularity theory then implies that $w_{r,\delta}$ is smooth away from the corners of $A_{r,\delta}$. In fact we get uniform estimates which allow us to send $\delta \to 0$ and extract a limit $w_r$ in $W^{1,2}_0(A_r)$. The limit is smooth in the interior and vanishes on the the positive $x_1$-axis in $A_r$. It is not difficult to show that $w_r$ is reflection-symmetric through the $x_1$-axis---this follows by applying the maximum principle to $w_r/\bar w_r$, where $\bar w_r(x_1, x_2) := \bar w_r (x_1,-x_2)$. It follows that we can define a $C^1$-section $u_r$ of $L$ over $A_r$ by choosing unit sections $e_\pm$ for $L$ over $S_{\pm} := \mathbb{R}^2\setminus\{(x_1,0) : \pm x_1 \geq 0\}$ and declaring that 
    \[u_r(x) \cdot e_+ = w_r(x), \;\; x \in S_+, \qquad 
        u_r(x) \cdot e_- = 
        \begin{cases}
            w_r(x), & x \in S_- \cap \{x_2 \geq 0\}\\
            -w_r(x), & x \in S_-\cap \{x_2 < 0\}.
        \end{cases}
    \]
One can then check directly that $u_r$ solves the Euler--Lagrange equation      \[\varepsilon^2 \Delta u_r = (|u_r|^2 - 1)u_r\]
in the weak sense, and hence is smooth in $A_r$. Moreover, we have uniform interior estimates which ensure that, upon sending $r \to \infty$, we obtain a smooth section $u$ of $L$ which is critical for $E_\varepsilon$ and whose nodal set contains the positive $x_1$-axis. In fact, for any ball $B \subset S_+$, we can use the positive minimizer in $W^{1,2}_0(B)$ as a barrier to ensure that $u$ is nonzero in $B$. So the nodal set of $u$ is precisely the positive $x_1$-axis. The energy of $u$ in $B_1(0)\setminus\{0\}$ is bounded uniformly as $\varepsilon \to 0$. This can be seen by comparing with a Lipschitz section which agrees with $u$ on $\partial B_1(0)$, vanishes inside $\{|x|\leq \varepsilon\}$ and equals 1 in $\{2\varepsilon \leq |x| \leq 1-\varepsilon\}$.

By \cite[Proposition~3.3]{HT}, the discrepancy $\xi$ of the solution just constructed satisfies an a~priori upper bound in each ball $\bar B \subset \mathbb{R}^2 \setminus \{0\}$. However, as we will now demonstrate, $\xi(x)$ becomes unbounded from above as $x \to 0$. This is one of the key differences in behaviour exhibited by critical sections at boundary points and in the interior, and is the main difficulty which we must overcome in our proof of Theorem~\ref{main critical}. 
    
To see that $\xi$ is unbounded, it is useful define $v(z) := u(z^2)$, where $z$ is the standard complex coordinate on $\mathbb{R}^2\setminus\{0\}$. The pullback of $L$ by $z \mapsto z^2$ is trivial, so we may view $v$ as a function on $\mathbb{R}^2 \setminus \{0\}$. Straightforward computations show that
    \begin{equation}\label{complex square eq}
        \varepsilon^2 \Delta v = 4|z|^2(|v|^2 - 1) v,
    \end{equation}
and that the discrepancy of $u$ satisfies
    \[\xi(z^2) = \varepsilon \frac{|\nabla v(z)|^2}{8|z|^2} - \frac{(1-|v(z)|^2)^2}{4\varepsilon}.\]
Moreover, since $E_\varepsilon(u) < \infty$, $v$ has finite $W^{1,2}$-norm in $B_1(0)\setminus\{0\}$, and can therefore be extended to a smooth solution of \eqref{complex square eq} in $B_1(0)$, by setting $v(0)=0$. From our construction of $u$ it follows that (up to a choice of signs) $v$ is positive in $\{x_2 > 0\}$ and negative in $\{x_2 < 0\}$. By the Hopf lemma, we conclude that $|\nabla v| > 0$ in a neighbourhood of the origin. From the formula for $\xi$ above we conclude that $\xi(z^2)\to\infty$ as $z \to 0$.

\subsection{Acknowledgements} The authors would like to express gratitude to M. Struwe for suggesting the problem and to T.~Bourni for a number of helpful conversations. 

%%%%%%%%%%%%%%%%%%%%%%%%%%%%%%%%%%%%%%%%%%%%%%%%%
%%%%%%%%%%%%%%%%%%%%%%%%%%%%%%%%%%%%%%%%%%%%%%%%%
%%%%%%%%%%%%%%%%%%%%%%%%%%%%%%%%%%%%%%%%%%%%%%%%%
%%%%%%%%%%%%%%%%%%%%%%%%%%%%%%%%%%%%%%%%%%%%%%%%%
%%%%%%%%%%%%%%%%%%%%%%%%%%%%%%%%%%%%%%%%%%%%%%%%%
%%%%%%%%%%%%%%%%%%%%%%%%%%%%%%%%%%%%%%%%%%%%%%%%%
%%%%%%%%%%%%%%%%%%%%%%%%%%%%%%%%%%%%%%%%%%%%%%%%%
%%%%%%%%%%%%%%%%%%%%%%%%%%%%%%%%%%%%%%%%%%%%%%%%%

\section{Real line bundles}\label{topology}

In this section we discuss the natural correspondence between the set of vector bundles of rank $1$ over a connected manifold $X$ and the cohomology group $H^1(X,\Z_2)$. Later on, we delve into the following two cases: \begin{enumerate}
    \item $X=M\setminus \Gamma$, where $H_1(M)=H_2(M)=0$ and $\Gamma\subset M$ is a codimension two embedded submanifold. This includes $M\simeq \R^n$.
    \item $X=M$ is a closed manifold.
\end{enumerate} We focus on these because of the applications we have in mind: case (1) is motivated by the search for a line bundle that provides a setting for solving the Plateau problem with boundary $
\Gamma$, while case (2) allows one to generate minimal hypersurfaces in arbitrary homology classes in closed manifolds. However, as we later discuss, some of the arguments for case (1) transfer to more general situations. 

\subsection{Real line bundles and $\operatorname{Hom}(H_1(M),\Z_2)\simeq H^1(M,\Z_2)$}

Given a real line bundle $\pi: L \to X$, we can use a partion of unity to equip $L$ with a bundle metric. This allows us to define $U(L) \subset L$, the set of unitary vectors with respect to such metrics. The restriction $\pi: U(L) \to X$ is a double cover of $X$, which is independent of the choice of metric modulo homeomorphisms. It is a simple exercise to see that $U(L)$ is connected if and only if $L$ is not the trival bundle. This observation is the first half of the correspondence between nontrivial line bundles over $X$ and connected double covers of $X$. In the other direction, a connected double cover $\widetilde{X}\to X$ is always a normal covering space whose group of deck transformations is $\Z_2$. In particular, such a covering is generated by a homemorphism $\rho: \widetilde{X}\to\widetilde{X}$, such that $\rho^2=\operatorname{id}|_{\widetilde{X}}$. It is another simple exercise to see that the quotient $L=\widetilde{X}\times \R/\sim$, where $(p,t)\sim (\rho(p),-t)$, is a nontrivial line bundle over $X$, and that the two constructions above are inverse modulo homeomorphisms. 

By the classification of covering spaces, connected double covers of $X$ are in correspondence with $\{H<\pi_1(X):[H:\pi_1(X)]=2\}$, i.e., the set of subgroups of $\pi_1(X)$ of index 2. Index two subgroups are normal, so the first isomorphism theorem for groups implies that such subgroups correspond to elements of $\operatorname{Hom}(\pi_1(X),\Z_2)$. Since $\Z_2$ is abelian, these homorphisms factor through the abelianisation of $\pi_1(X)$, which is $H_1(X)$. This gives the correspondence 
    \[\operatorname{Hom}(\pi_1(X),\Z_2)\simeq \operatorname{Hom}(H_1(X),\Z_2).\]
Finally, by the universal coefficient theorem for cohomology, we have that 
    \[\operatorname{Hom}(H_1(X),\Z_2)\simeq H^1(X,\Z_2).\]
Putting this all together, we have shown that there are as many real line bundles over $X$ as there are elements in $\operatorname{Hom}(H_1(X),\Z_2)\simeq H^1(X,\Z_2)$. We note that the element of $H^1(X,\Z_2)$ associated with a line bundle is known as its first Stiefel--Whitney class. In many applications, $H_1(X)$ is a finitely generated abelian group and an element of $\operatorname{Hom}(H_1(X),\Z_2)$ is uniquely determined by where it sends a set of generators. The relation between the generators of $H_1(X)$ and the global problem we want to solve in $X$ must be treated in a case by case fashion. 

\subsection{Real line bundles from classes of loops mod $2$}

We begin  with case (1). Let $X= M \setminus \Gamma$, where $M$ is diffeomorphic to $\R^n$ and $\Gamma$ is a closed submanifold of $M$ of codimension $2$. In this case, 
    \[H_1(M\setminus \Gamma)\simeq H^{n-2}(\Gamma)\simeq (\Z\oplus \cdots \oplus \Z) \oplus (\Z_2 \oplus \cdots \oplus \Z_2)\]
where the number of copies of $\Z$ and $\Z_2$ are equal to the number of orientable and non-orientable connected components of $\Gamma$, respectively. Moreover, a generator corresponding to a connected component $\Gamma_i\subset \Gamma$ is given by any loop $\gamma_i$ satisfying $\operatorname{link}_{\Z} (\gamma_i,\Gamma_i)=\pm 1$ and $\operatorname{link}_{\Z}(\gamma_i,\Gamma\setminus \Gamma_i)=0$. It is a simple exercise to check that there is exactly one line bundle $L$ over $X = M \setminus \Gamma$ such that the nodal set of a generic section intersects each generator $\gamma_i$ an odd number of times. Indeed, this is the bundle corresponding to the homomorphism $H_1(M)\to \Z_2$ which sends each loop in a set of generators to $1$. We call this bundle the \emph{spanning bundle} over $X$. The spanning bundle is the only line bundle where we can hope to solve Plateau's problem for $\Gamma$. It is worth noting that the pullback of the spanning bundle to any generator $\gamma_i$ is isomorphic to the unique nontrivial line bundle over $S^1$, namely the M\"{o}bius bundle. 

\begin{remark}
For completeness, we summarise how the first isomorphism above arises as a
composition of several maps. Let $T$ be a closed tubular neighborhood of $\Gamma$. The isomorphism then factors as 
    \[H_1(M\setminus\Gamma) \simeq H_2(M,M\setminus\Gamma) \simeq H_2(T,T\setminus \Gamma)\simeq H_2(T,\partial T)\simeq H^{n-2}(T)\simeq H^{n-2}(\Gamma).\]
The first map is obtained from the long exact sequence of the pair $(M,M\setminus \Gamma)$:  $$\cdots \to 0=H_2(M) \to H_2(M,M\setminus\Gamma) \to H_1(M\setminus\Gamma) \to H_1(M)=0\to \cdots$$ The second map is just the excision property. The third is a retraction of $T\setminus\Gamma$ onto $\partial T$. The fourth is Lefschetz duality, which generalises Poincar\'e duality to orientable manifolds with boundary. Finally, the fifth and last map is the retraction of $T$ onto $\Gamma$. 
\end{remark} 

\subsection{Real line bundles from classes of hypersurfaces mod $2$} Case (2) is a lot simpler. In fact, it follows directly from Poincar\'e duality that $H^1(M,\Z_2)\simeq H_{n-1}(M,\Z_2)$. In particular, for each nontrivial class of $(n-1)$-dimensional cycles mod $2$, there exists a unique surjective homomorphism from $H_1(M)$ to $\Z_2$. By what we disscused at the begining of the section, each one of these homomorphisms gives rise to a real line bundle over $M$. It is then a simple exercise to check that the nodal set of a generic section of this bundle is in the mod $2$ homology class which gave rise to the bundle in the first place. 

%%%%%%%%%%%%%%%%%%%%%%%%%%%%%%%%%%%%%%%%%%%%%%%%%
%%%%%%%%%%%%%%%%%%%%%%%%%%%%%%%%%%%%%%%%%%%%%%%%%
%%%%%%%%%%%%%%%%%%%%%%%%%%%%%%%%%%%%%%%%%%%%%%%%%
%%%%%%%%%%%%%%%%%%%%%%%%%%%%%%%%%%%%%%%%%%%%%%%%%
%%%%%%%%%%%%%%%%%%%%%%%%%%%%%%%%%%%%%%%%%%%%%%%%%
%%%%%%%%%%%%%%%%%%%%%%%%%%%%%%%%%%%%%%%%%%%%%%%%%
%%%%%%%%%%%%%%%%%%%%%%%%%%%%%%%%%%%%%%%%%%%%%%%%%
%%%%%%%%%%%%%%%%%%%%%%%%%%%%%%%%%%%%%%%%%%%%%%%%%

\section{A priori estimates for critical sections}\label{critical sections}

Let $L$ be a real line bundle over an open subset $X \subset \mathbb{R}^n$, where $n \geq 2$. We equip $L$ with a bundle metric $\langle \cdot,  \cdot \rangle$ and a flat metric connection $\nabla$. We denote the Euclidean inner product of two vectors $g, h \in \mathbb{R}^n$ by $g \cdot h$, and write $D$ for the Euclidean connection acting on vector fields tangent to $X$. The induced metric and connection on $L \otimes T^*M$ will also be denoted $\langle \cdot, \cdot \rangle$ and $\nabla$ respectively. We define a Laplacian $\Delta$ acting on sections of $L$ by 
    \[\Delta u := \tr(\nabla \nabla u).\]
With respect to an orthonormal frame $\{e_i\}_{i=1}^n$ for $\mathbb{R}^n$ we have 
    \[\langle \nabla u, \nabla \varphi\rangle = \sum_{i=1}^n \langle \nabla_i u, \nabla_i \varphi\rangle, \qquad \Delta u = \sum_{i=1}^{n} \nabla_i(\nabla_i u).\]

Having fixed a bundle $L \to X$ as above, the class of $C^k$ sections with compact support in $X$ will be denoted $C^{k}_{0}(X,L)$. A section lies in the space $W^{1,2}_{\loc}(X, L)$ if $|u| \in L^2_{\loc}(X)$ and there exists a section $\nabla u$ of $L \otimes T^*M$ with the following properties. Firstly, $|\nabla u| \in L^2_{\loc}(X)$ and, secondly, given an orthonormal frame $\{e_i\}_{i=1}^{n}$ for $\mathbb{R}^n$, we have
    \[\int_X \langle \nabla_i u, \varphi\rangle \, dx = - \int_X \langle u, \nabla_i \varphi \rangle \, dx\]
for every $1 \leq i \leq n$ and $\varphi \in C^1_0(X, L)$.  

Given a constant $\varepsilon > 0$ and a section $u \in W^{1,2}_{\loc}(X,L)$ we define the energy of $u$ in each Borel set $A \subset X$ by  
    \[E_\varepsilon(u, A) := \int_{A} \ \varepsilon\frac{|\nabla u|^2}{2} + \frac{W(u)}{\varepsilon} \, dx, \qquad W(u) := \frac{1}{4}(1-|u|^2)^2.\]
The total energy of $u$ is $E_\varepsilon(u) := E_\varepsilon(u, X)$. A section $u \in W^{1,2}_{\loc}(X,L)$ of $L$ is defined to be critical for $E_\varepsilon$ if $E_\varepsilon(u, K) < \infty$ for every compact $K \subset X$ and 
    \[\frac{d}{dt}\Big|_{t = 0} E_\varepsilon(u + t \varphi) = 0,\]
or equivalently 
    \[\varepsilon^2 \int_X \langle \nabla u , \nabla \varphi \rangle = \int_X \langle W'(u), \varphi \rangle, \qquad W'(u) := (|u|^2 - 1)u,\]
for every $\varphi \in C^{1}_{0}(X,L)$. Standard elliptic theory implies that any critical section $u$ is smooth, and hence satisfies
    \[\varepsilon^2 \Delta u = W'(u).\]

Let $\Gamma$ be a closed codimension-two submanifold of $\mathbb{R}^n$.
In this section we always assume either that $X = \mathbb{R}^n \setminus \Gamma$, or else
    \begin{gather}  X = B \setminus \Gamma, \text{ where $B \subset \mathbb{R}^n$ is an open ball, and} \notag \\ \text{$B \setminus \Gamma$ is diffeomorphic to the complement of an $(n-2)$-plane in $\mathbb{R}^n$.} \label{local} \tag{A1} \end{gather}
For $L \to X$ nontrivial and positive constants $C_0$ and $C_1$, we are interested in the situation where
    \begin{gather} u_k \text{ is a sequence of critical sections for } E_{\varepsilon_k} \text{ such that } \varepsilon_k \to 0 \text{ and } \notag\\
    \sup_k \bigg(\sup_X |u_k|\bigg) \leq C_0, \qquad \sup_k E_{\varepsilon_k}(u_k) \leq C_1. \label{sequence of sections} \tag{A2}
    \end{gather}
Our goal in this section and the next is to study how the energy of such a sequence concentrates as $\varepsilon_k \to 0$, and in particular to prove Theorems~\ref{main critical} and \ref{main critical global}. We mostly work in the local setting \eqref{local}, in which there is only one nontrivial line bundle $L \to X$. Much of our analysis is inspired by \cite{HT}.

\subsection{Derivative estimates at the boundary} Suppose we are in the setting of \eqref{local}, and denote by $L$ the nontrivial real line bundle over $X$. We recall some key estimates from \cite{Struwe} concerning the pointwise behaviour of critical sections for $E_\varepsilon$ near $\Gamma$. As we pointed out in the introduction, even if $u$ is bounded, its derivative $\nabla u$ may not be. However we have some control on the rate at which $\nabla u$ may blow up as we approach $\Gamma$.  

Let $\rho$ denote the distance function to $\Gamma$. There is a positive constant $\delta_0(\Gamma)$ such that $\Gamma$ admits a tubular neighbourhood of size $\delta_0$. This tubular neighbourhood is given by $\{\rho < \delta_0\}$, and $\rho$ is smooth there. Given a vector field $g$ defined in $\{\rho < \delta_0\}$, let us write
    \[g^\perp := (g \cdot \grad \rho) \grad \rho, \qquad g^\top := g - g^\perp.\]

\begin{proposition}\label{boundary derivatives}
Let $u \in C^\infty(X,L)$ be a critical section for $E_\varepsilon$ such that $|u| \leq C_0$ and $E_\varepsilon(u) < \infty$. There exists a constant $\delta_1(n,\Gamma) < \delta_0$ with the following property. If $\varepsilon R < \delta_1$ then for any nonnegative integers $m = k + l$, and unit vectors $\{e_i\}_{i=1}^k$ and $\{f_i\}_{i=1}^l$ such that $e_i^\top = 0$ and $f_i^\perp = 0$, we have
    \[|\nabla^{m} u(e_1, \dots, e_k, f_1, \dots, f_l)|^2 \leq C \varepsilon^{-1-2l} \rho^{1-2k}\]
at every point $x \in X$ satisfying $\rho(x) \leq R\varepsilon$ and $\dist(x, \partial B) \geq 2R_\varepsilon$, where $C = C(n, \Gamma, C_0,R, m)$. In particular, at every such $x \in X$ we have
    \[|u|^2 \leq C\varepsilon^{-1}\rho \qquad \text{and} \qquad |\nabla u|^2 \leq C \varepsilon^{-1} \rho^{-1},\]
where where $C = C(n, \Gamma, C_0,R)$. 
\end{proposition}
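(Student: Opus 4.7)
The plan is to adapt the Fr\"ohlich--Struwe strategy: pass to a branched double cover on which the pullback of $L$ is trivial so that $u$ lifts to a single-valued smooth function $v$, apply standard elliptic regularity on the cover, and translate the bounds back via the chain rule.

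Fix a point $x \in X$ with $\rho(x) \leq R\varepsilon$ and $\dist(x, \partial B) \geq 2R\varepsilon$, and let $p_0 \in \Gamma$ denote its nearest-point projection. I would work in Fermi coordinates $(y, w) \in \mathbb{R}^{n-2} \times \mathbb{C}$ around $p_0$, so that $\Gamma = \{w = 0\}$, $\rho = |w|$, and the Euclidean metric differs from the flat one by curvature corrections of size $O(|w|)$. Consider the branched double cover $(y,z) \mapsto (y, z^2)$. Because $L$ has $\mathbb{Z}_2$ monodromy around $\Gamma$, its pullback here is trivial, and $u$ lifts to a scalar function $v(y,z)$ that is odd in $z$, so in particular $v(y, 0) = 0$. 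A chain-rule calculation then gives
\[
\varepsilon^2 \Delta_z v + 4|z|^2 \varepsilon^2 \Delta_y v = 4|z|^2 (|v|^2 - 1) v + \mathcal{R},
\]
where $\mathcal{R}$ collects metric curvature corrections with coefficients of size $O(|z|^2)$ and at most one derivative of $v$. Rescaling via $\tilde v(\tilde y, \tilde z) := v(\varepsilon \tilde y, \sqrt\varepsilon\, \tilde z)$ absorbs the $\varepsilon^2$ factors, yielding an $\varepsilon$-independent equation of the form $\Delta_{\tilde z} \tilde v + 4|\tilde z|^2 \Delta_{\tilde y} \tilde v = 4|\tilde z|^2 (|\tilde v|^2 - 1) \tilde v + \widetilde{\mathcal R}$ on a fixed ball; choosing $\delta_1$ small enough (depending on the $C^m$-geometry of $\Gamma$) makes the perturbation $\widetilde{\mathcal R}$ uniformly small.

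Next I would apply standard elliptic regularity to $\tilde v$. The bound $|\tilde v| \leq C_0$ together with the $W^{1,2}$-bound inherited from $E_\varepsilon(u) < \infty$ (the branch locus $\{\tilde z = 0\}$ has codimension two and is therefore $W^{1,2}$-removable, so $\tilde v$ is a distributional solution across it) gives, via $W^{2,p}$ and Schauder bootstrap, uniform $C^m$-estimates for $\tilde v$ depending only on $C_0$, $R$, $m$, and $\Gamma$. Combining the oddness of $\tilde v$ in $\tilde z$ (so that $\partial_{\tilde y}^l \partial_{\tilde z}^j \tilde v$ is odd in $\tilde z$ when $j$ is even and Taylor expansion yields $|\partial_{\tilde y}^l \partial_{\tilde z}^j \tilde v| \leq C|\tilde z|$, bounded by $C$ when $j$ is odd) with the chain rule for $\tilde w = \tilde z^2$ (which expresses $\partial_{\tilde w}^k \tilde u$ as a linear combination of terms $\partial_{\tilde z}^a \tilde v / \tilde z^b$ with $a \leq 2k$ and $b \leq k$) then produces $|\nabla_{\tilde y}^l \nabla_{\tilde w}^k \tilde u|^2 \leq C \tilde\rho^{\,1-2k}$. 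Undoing the rescaling via $\tilde\rho = \rho/\varepsilon$ and $\partial^m \tilde u = \varepsilon^m \partial^m u$ recovers the claimed bound $|\nabla^m u|^2 \leq C \varepsilon^{-1-2l} \rho^{1-2k}$; the stated special cases correspond to $(k,l) = (0,0)$ and $(1,0)$.

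The main obstacle is securing the uniform $C^m$-estimates on $\tilde v$ despite the degenerate-looking operator $\Delta_{\tilde z} + 4|\tilde z|^2 \Delta_{\tilde y}$ and the curvature remainder $\widetilde{\mathcal R}$. The degeneracy is ultimately harmless because $\tilde v$ arises by construction as the pullback, through a smooth branched covering, of a solution $\tilde u$ of the non-degenerate Allen--Cahn equation: $\tilde v$ is smooth away from $\tilde z = 0$ by standard Schauder theory applied to $\tilde u$, and extends smoothly across $\tilde z = 0$ by the finite-energy removable-singularity argument illustrated in the introduction's example. Making all of this quantitative, uniform in $\varepsilon$, and iterating the bootstrap to arbitrary order $m$ (while keeping the curvature perturbation under control via the size condition $\varepsilon R < \delta_1$) is the technical heart of the proof.
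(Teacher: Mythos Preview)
Your outline is essentially the paper's proof: Fermi coordinates around a boundary point, pull back through the branched double cover $z\mapsto z^2$ so that $u$ becomes an odd scalar function $\tilde v$, exploit finite energy to remove the codimension-two singularity and obtain a global weak formulation, bootstrap to $C^m$ bounds on $\tilde v$, then translate back via the chain rule. Your anisotropic rescaling $(\tilde y,\tilde z)=(y/\varepsilon,\,z/\sqrt\varepsilon)$ is a harmless variant of the paper's choice (rescale isotropically by $R\varepsilon$ first, then cover); the scaling bookkeeping you do at the end is correct.

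One point to sharpen. You write that the $C^m$ bounds on $\tilde v$ follow from ``standard $W^{2,p}$ and Schauder bootstrap,'' but the operator $\Delta_{\tilde z}+4|\tilde z|^2\Delta_{\tilde y}$ is genuinely degenerate at $\tilde z=0$, so off-the-shelf Schauder/Calder\'on--Zygmund theory does not apply there. The mechanism actually used (following Struwe, which the paper cites) is an $L^2$ energy bootstrap: test the weak equation against $\partial_{y}(\eta^2\partial_{y}\tilde v)$ and iterates thereof to get $\|\partial_y^m\partial_{z_i}\tilde v\|_{L^2}\leq C(n,R,C_0,m)$, then invoke Sobolev embedding. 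Note also that the finite-energy hypothesis is used only qualitatively, to justify testing against odd $\tilde\varphi$ supported across $\{\tilde z=0\}$; the quantitative input to the bootstrap is just $|\tilde v|\leq C_0$, which is why the final constants depend on $C_0$ but not on $E_\varepsilon(u)$. You correctly flag this step as ``the technical heart,'' but it is worth being explicit that the route is energy estimates rather than Schauder theory.
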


Proposition~\ref{boundary derivatives} was proven for $n =3$ in \cite[Theorem~3.2]{Struwe}. The strategy is to conisder a small normal tubular neighbourhood of $\Gamma$, and lift $u$ by a map which covers this neighbourhood twice (the map acts by the complex square on normal discs). The lift of $u$, denoted $\tilde u$, can be represented by a function which is odd in normal directions. Finiteness of energy ensures that $\tilde u$ extends to a weak solution of the lifted Euler--Lagrange equation for all odd test functions, not just those which compactly supported away from $\Gamma$. Using this fact, one can derive $L^2$ derivative estimates for $\tilde u$ up to any order, and deduce pointwise bounds using the Sobolev embedding theorem. Passing these to the covering map yields the estimates for $u$ stated in Proposition~\ref{boundary derivatives}.

The same argument works in dimensions $n \geq 4$, except that some extra lower-order terms appear in the computations. The reason is that $N\Gamma$ may not admit a parallel orthonormal frame, even locally. However these extra terms can all be absorbed in a straightforward manner. We describe how this works, up to the point where the argument in \cite{Struwe} applies.

Let $\delta_1 < \delta_0$ be a small positive constant to be refined as we proceed. Fix an $\varepsilon >0$, and suppose $R$ is such that $R \varepsilon < \delta_1$. We consider an arbitrary point in $p \in \Gamma$ such that $\dist(p, \partial B) \geq 2R\varepsilon$. Let $u$ be a critical section for $E_\varepsilon$ such that $|u| \leq C_0$ and $E_\varepsilon(u) < \infty$.

It will be convenient to rescale. We define $D(x) := \tfrac{x-p}{R\varepsilon}$ and set
    \[\hat B = D(B), \qquad \hat \Gamma := D(\Gamma), \qquad \hat X := \hat B \setminus \hat \Gamma,\]
and denote by $\hat L$ the pullback of $L$ to $\hat X$ via $D^{-1}$. We equip $\hat L$ with the pullback metric and connection, still denoted $\langle \cdot, \cdot \rangle$ and $\nabla$ respectively. Let $\hat u$ denote the pullback of $u$, namely $\hat u(x) = u(R\varepsilon x+p)$. The section $\hat u$ is critical for $E_{\hat \varepsilon}$, where $\hat \varepsilon := R^{-1}$. Moreover, $\hat u$ satisfies $|\hat u| \leq C_0$ and 
    \[E_{\hat \varepsilon}(\hat u) = \frac{1}{(R\varepsilon)^{n-1}} E_\varepsilon(u) <\infty.\]
By taking $\delta_1$ sufficiently small we can assume that $\hat \Gamma$ is as close as we like in $C^\infty$ to the linear subspace $T_p\Gamma$ in any neighbourhood of the origin. In particular, we may then choose Gaussian coordinates for the unit tubular neighbourhood of $\hat \Gamma$ near 0, via a map $\Phi : B^2_{1} \times B^{n-2}_1 \to \mathbb{R}^{n}$ defined as follows:    
    \begin{itemize}
        \item The restriction of $\Phi$ to $\{0\} \times B^{n-2}_1$ gives a system of normal coordinates on $\hat \Gamma$.
        \item For all $y = (z_1,z_2,x)$ in $B^2_{1} \times B^{n-2}_1$,
            \[\Phi(y) = z_1 \nu_1(\Phi(0,x)) + z_2\nu_2(\Phi(0,x)) +  \Phi(0,x),\]
        where $\{\nu_1, \nu_2\}$ is a local orthonormal frame for $N\hat \Gamma$ which is parallel (with respect to the induced connection on $N \hat \Gamma$) along every radial geodesic emanating from 0. 
    \end{itemize}
We write $\Omega = (B_{1}^{2}\setminus\{0\}) \times B_1^{n-2}$. Let $S:\Omega \to \Omega$ be defined by
    \[S(z_1,z_2,x) := (z_1^2 - z_2^2, 2z_1z_2, x),\]
so that $S$ acts on $z$ by taking the complex square. We define $P = S \circ \Phi$ and consider the pullback bundle $P^* \hat L$, equipped with the pullback metric and connection, still denoted $\langle \cdot, \cdot \rangle$ and $\nabla$ respectively. It is not difficult to show that $P^* \hat L$ is trivial, and hence admits a globally defined unit section, which we consider to be fixed from now on. Sections of $\hat L$ over $\Phi(\Omega)$ are then in one-to-one correspondence, via the pullback $P^*$, with functions on $\Omega$ which are odd in $z$. In particular, we may view $\tilde u := P^* \hat u$ as a function on $\Omega$ which satisfies $\tilde u(z,x) = -\tilde u(-z,x)$.

Let $\varphi$ denote an arbitrary section in $C^1_0(\Phi(\Omega), \hat L)$, and define $\tilde \varphi := P^* \varphi$. We write $g$ for the pullback of the Euclidean metric on $\mathbb{R}^n$ via $P$, so that 
    \[g_{ij}(z,x) :=  \partial_{y_i} P(z,x) \cdot \partial_{y_j} P(z,x),\]
and write $g^{ij}$ for the inverse matrix of $g_{ij}$. We then have 
    \[\langle \hat u(P(z,x)), \varphi(P(z,x))\rangle = \tilde u(z,x) \tilde \varphi(z,x),\] 
and
    \[\langle \nabla \hat u (P(z,x)) , \nabla \varphi(P(z,x)) \rangle = g^{ij}(z,x)\partial_{y_i} \tilde u (z,x) \partial_{y_j} \tilde \varphi (z,x).\]
Since $\hat u$ is critical for $E_{\hat \varepsilon}$,
    \[0 = \int_{\Phi(\Omega)} \langle \nabla \hat u , \nabla \varphi \rangle + \hat \varepsilon^{-2}(|\hat u|^2 - 1) \langle \hat u , \varphi \rangle \,dx.\]
Equivalently, $\tilde u$ satisfies 
    \begin{align}\label{z squared weak y}
        0 =\int_{\Omega} \Big(g^{ij}\partial_{y_i} \tilde u \partial_{y_j} \tilde \varphi + \hat \varepsilon^{-2}(|\tilde u|^2 - 1) \tilde u \tilde \varphi\Big) \sqrt{|\det(g)|}\,dy.
    \end{align}
As $\delta_1 \to 0$, $\Phi$ converges smoothly to the identity, so $P$ converges smoothly to $S$. A straightforward computation shows that we may write 
    \[g = \begin{bmatrix}4|z|^2 & 0 & 0\\0 & 4|z|^2 & 0\\ 0 & 0 & 1\end{bmatrix} + a(z,x),\]
where $|z|^{-2}a(z,x) \to 0$ in $C^\infty$, uniformly in $x$, as $\delta_1 \to 0$. It follows that
    \[\sqrt{|\det(g)|} = 4|z|^2h(z,x), \qquad g^{-1} = h(z,x)^{-1}\begin{bmatrix}\frac{1}{4|z|^2} & 0 & 0\\0 & \frac{1}{4|z|^2} & 0\\ 0 & 0 & 1\end{bmatrix} + b(z,x)\]
where where $h(z,x) \to 1$ and $|z|^2b(z,x) \to 0$ in $C^\infty$, uniformly in $x$, as $\delta_1 \to 0$. With this notation \eqref{z squared weak y} becomes
    \begin{equation}\label{z squared weak}
        0 = \int_{\Omega} \partial_{z_i} \tilde u \partial_{z_i} \tilde \varphi + 4|z|^2 \Big(\partial_{x_i} \tilde u \partial_{x_i} \tilde \varphi + h b^{ij}\partial_{y_i} \tilde u \partial_{y_j} \tilde \varphi + \hat \varepsilon^{-2} h (|\tilde u|^2 - 1) \tilde u \tilde \varphi\Big) \, dz dx.
    \end{equation}
Moreover, the energy of $u$ in $\Phi(\Omega)$ can be expressed as 
    \begin{align*}
        \int_{\Phi(\Omega)} \hat \varepsilon & \frac{|\nabla \hat u|^2}{2} + \frac{W(\hat u)}{\hat \varepsilon} \, dx\\
        &= \frac{1}{2}\int_{\Omega} \hat \varepsilon \frac{|\partial_{z} \tilde u|^2}{2} + 4|z|^2 \bigg(\hat \varepsilon \frac{|\partial_{x} \tilde u|^2}{2} + \hat \varepsilon h b^{ij}\frac{\partial_{y_i} \tilde u \partial_{y_j} \tilde u}{2} + h \frac{W(\tilde u)}{\hat \varepsilon}\bigg) \, dz dx,
    \end{align*}
where
    \[|\partial_z \tilde u|^2 := \sum_{i=1,2}|\partial_{z_i} \tilde u|^2, \qquad |\partial_x \tilde u|^2 := \sum_{i=1}^{n-2} |\partial_{x_i} \tilde u|^2.\]
We may assume $\delta_1$ is so small that 
    \[ -\frac{1}{2}|\xi|^2 \leq 4|z|^2hb^{ij}\xi_i \xi_j \leq \frac{1}{2} |\xi|^2 \]
for all $\xi \in \mathbb{R}^n$, so that we obtain 
    \[\int_{\Omega} \hat \varepsilon |\partial_{z} \tilde u|^2 + 4|z|^2 \bigg(\hat \varepsilon|\partial_{x} \tilde u|^2 + h \frac{W(\tilde u)}{\hat \varepsilon} \bigg) \, dz dx \leq 2\int_{\Phi(\Omega)} \hat \varepsilon \frac{|\nabla \hat u|^2}{2} + \frac{W(\hat u)}{\hat \varepsilon} \, dx.\] 
The right-hand side is finite by assumption, so we have
    \[ \int_{\Omega} |\partial_{z} \tilde u|^2 + 4|z|^2 |\partial_{x} \tilde u|^2 \, dz dx < \infty.\]
Using this fact one can extend $\tilde u$ to a function in $W^{1,2}(B_1^2\times B^{n-2})$ such that \eqref{z squared weak} holds for all $\tilde \varphi \in C^1_0(B_1^2\times B^{n-2}_1)$ satisfying $\tilde \varphi(-z,x)=-\tilde \varphi(z,x)$. This is achieved by introducing cutoffs in $C^1_0(\Omega)$ which approximate the characteristic function of $B_1^2\times B^{n-2}_1$ and taking a limit. 

We may now proceed exactly as in \cite[Theorem~3.2]{Struwe} to establish that, for every $m \geq 0$ and partial derivative $\partial^m_y$ of order $m$ in the variables $y$,
    \[\sum_{i=1,2} \|\partial_y^m(\partial_{z_i} \tilde u)\|_{L^2(B^2_{1/2} \times B^{n-2}_{1/2})} \leq C(n, R, C_0, m).\]
By the Sobolev embedding theorem, we conclude that $\partial_{z_i} \tilde u \in C^\infty(B^2_{1/4} \times B^{n-2}_{1/4})$, and we have 
    \begin{equation}\label{z derivatives}
    \sum_{i=1,2} \|\partial_{z_i} \tilde u\|_{C^m(B^2_{1/4} \times B^{n-2}_{1/4})} \leq C(n, R, C_0, m)
    \end{equation}
for a larger constant $C$. It follows that $\tilde u$ is smooth in $z$. Consequently, $\tilde u(z, x) = - \tilde \varphi(-z,x)$ implies that $\tilde u(0, x) = 0$ for each $x \in B^{n-2}_{1/4}$. From this we conclude that $\tilde u$ is smooth also in $x$, since we may write 
    \[\tilde u(z,x) = \int_0^1 \frac{d}{dt} \tilde u(tz, x)\, dx = \int_0^1 z_1\partial_{z_1} \tilde u(tz, x) + z_2 \partial_{z_2} \tilde u(tz, x)\, dt.\]
Moreover, for any $m \geq 0$ and partial derivative $\partial^m_x$ of order $m$ in the variables $x$, we have 
    \begin{equation}\label{x derivatives}
    |\partial^m_x \tilde u(z,x)| \leq |z| \sum_{i=1,2} \|\partial_{z_i} \tilde u\|_{C^m(B^2_{1/4} \times B^{n-2}_{1/4})} \leq C(n, R, C_0, m)|z|. 
    \end{equation}
for $(z,x) \in B^{2}_{1/4} \times B^{n-2}_{1/4}$. Now, the estimates \eqref{z derivatives} and \eqref{x derivatives} can be rewritten as estimates for the original critical section $u$. This gives Proposition~\ref{boundary derivatives}.

\subsection{Energy monotonicity}
We continue working in the setting of \eqref{local}. A crucial step in the proof of Theorem~\ref{main critical} will be to control how the energy of critical sections concentrates in small balls around points in the boundary $\Gamma$. To facilitate this analysis we establish an almost-monotonicity formula for the rescaled energy 
    \[\frac{1}{r^{n-1}} E_\varepsilon(u, B_r(p))\]
at boundary points $p \in \Gamma$. The computations follow \cite{HT}, but various error terms arising from the boundary need to be dealt with. This is achieved using Proposition~\ref{boundary derivatives}. 

We first derive an inner variation identity for vector fields which are tangential on $\Gamma$.

\begin{lemma}\label{first variation}
Let $u \in C^\infty(X,L)$ be a critical section for $E_\varepsilon$. Suppose $\sup |u| < \infty$ and $E_\varepsilon(u) < \infty$, and let $g \in C^1_0(B, \mathbb{R}^n)$ be such that $g|_{\Gamma}$ is tangent to $\Gamma$.  We then have 
    \[\int_X \bigg(\varepsilon \frac{|\nabla u|^2}{2} + \frac{W(u)}{\varepsilon}\bigg)\Div g - \varepsilon \langle \nabla_{Dg} u , \nabla u \rangle  \, dx  = 0.\]
\end{lemma}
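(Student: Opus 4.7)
My plan is to derive the identity first for vector fields compactly supported inside $X$, where standard inner-variation techniques apply, and then pass to $g$ via a cutoff near $\Gamma$. The error introduced by the cutoff will be controlled using the boundary derivative estimates of Proposition~\ref{boundary derivatives} together with the tangency of $g$ to $\Gamma$.

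First I would establish the identity for any $h \in C^1_0(X, \mathbb{R}^n)$ compactly supported inside $X$. This follows from the divergence-free property of the stress-energy tensor
$$T_{ij} := \varepsilon \langle \nabla_i u, \nabla_j u\rangle - \bigg(\varepsilon \frac{|\nabla u|^2}{2} + \frac{W(u)}{\varepsilon}\bigg)\delta_{ij}.$$
A direct computation, using metric compatibility and flatness of the connection $\nabla$ on $L$ (which give $\nabla_i \nabla_j u = \nabla_j \nabla_i u$), reduces $\partial_j T_{ij}$ to $\langle \nabla_i u, \varepsilon\Delta u - W'(u)/\varepsilon\rangle$, which vanishes by criticality. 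Integrating $h^i \partial_j T_{ij}$ over $X$ and integrating by parts (with no boundary term, since $h$ has compact support in $X$) then yields the identity with $g$ replaced by $h$.

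Next, let $\eta : [0, \infty) \to [0, 1]$ be a smooth cutoff with $\eta \equiv 0$ on $[0, 1]$ and $\eta \equiv 1$ on $[2, \infty)$, and set $\eta_\delta(x) := \eta(\rho(x)/\delta)$, where $\rho$ denotes distance to $\Gamma$. For small $\delta > 0$ the field $h_\delta := \eta_\delta g$ is compactly supported in $X$, so the previous step applies; expanding $\Div(\eta_\delta g) = \eta_\delta\Div g + g \cdot D\eta_\delta$ and $\partial_j(\eta_\delta g^i) = \eta_\delta\partial_j g^i + g^i\partial_j \eta_\delta$ yields the desired identity for $g$ multiplied by $\eta_\delta$, plus a remainder
$$R_\delta = \int_X \bigg(\varepsilon \frac{|\nabla u|^2}{2} + \frac{W(u)}{\varepsilon}\bigg)(g \cdot D\eta_\delta)\,dx \;-\; \int_X \varepsilon\, g^i\partial_j \eta_\delta \, \langle \nabla_i u, \nabla_j u\rangle \,dx.$$
Since $\eta_\delta \to 1$ pointwise in $X$ and both terms in the main identity are dominated by integrable functions, dominated convergence reduces the lemma to proving $R_\delta \to 0$ as $\delta \to 0$.

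The tangency of $g$ at $\Gamma$, combined with its $C^1$-regularity, yields $|g \cdot \nabla\rho| \leq C\rho$ in a neighborhood of $\Gamma$ (indeed, a Taylor expansion gives $g(x)\cdot(x-x_0) = O(\rho(x)^2)$ when $x_0$ is the closest point of $\Gamma$). Since $D\eta_\delta = \frac{1}{\delta}\eta'(\rho/\delta)\nabla\rho$ is radial and supported in the annulus $A_\delta := \{\delta \leq \rho \leq 2\delta\}$ of volume $O(\delta^2)$, this gives $|g \cdot D\eta_\delta| \leq C$ uniformly in $\delta$; the first term of $R_\delta$ is then bounded by $C\int_{A_\delta}(\varepsilon|\nabla u|^2/2 + W(u)/\varepsilon)\,dx$ and vanishes by absolute continuity of the Lebesgue integral. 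The main obstacle is the second term of $R_\delta$, whose integrand I would rewrite as $\frac{\varepsilon \eta'(\rho/\delta)}{\delta}\langle \nabla_g u, \nabla_{\nabla\rho} u\rangle$: a naive Cauchy--Schwarz bound gives only $|\langle \nabla_g u, \nabla_{\nabla\rho} u\rangle| \leq |g||\nabla u|^2 \leq C\varepsilon^{-1}\rho^{-1}$, which integrates to $O(1)$ over $A_\delta$ and is not small. To gain smallness I decompose $g = g^\top + g^\perp$: the sharp tangential estimate $|\nabla_f u|^2 \leq C\varepsilon^{-3}\rho$ for unit tangential $f$ from Proposition~\ref{boundary derivatives}, combined with $|\nabla_{\nabla\rho} u|^2 \leq C\varepsilon^{-1}\rho^{-1}$, gives $|\langle \nabla_{g^\top} u, \nabla_{\nabla\rho} u\rangle| \leq C\varepsilon^{-2}$, while $|g^\perp| \leq C\rho$ and the pointwise gradient bound control $|\langle \nabla_{g^\perp} u, \nabla_{\nabla\rho} u\rangle| \leq C\varepsilon^{-1}$. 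Since $\vol(A_\delta) \leq C\delta^2$, both contributions to the second term of $R_\delta$ are $O(\delta/\varepsilon) \to 0$, completing the argument.
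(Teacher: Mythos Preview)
Your proof is correct and follows essentially the same strategy as the paper: the divergence-free stress-energy tensor, the tangency bound $|g^\perp| \leq C\rho$, and the anisotropic derivative estimates of Proposition~\ref{boundary derivatives}. The only difference is cosmetic---the paper integrates $\Div(T(g))$ over $\{\rho > \delta\}$ and shows the resulting surface integral on $\{\rho = \delta\}$ vanishes, whereas you mollify with a cutoff $\eta_\delta$ and control a volume remainder; the estimates invoked are identical.
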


\begin{proof}
We define the stress-energy tensor $T$ acting on vector fields $f$ and $h$ by 
    \[T(f,h) := e_\varepsilon(u) f \cdot h - \varepsilon \langle \nabla_f u, \nabla_h u\rangle,\]
where 
    \[e_\varepsilon(u) = \varepsilon \frac{|\nabla u|^2}{2} + \frac{W(u)}{\varepsilon}\]
is the energy density. Using the Euler--Lagrange equation
    \[\varepsilon \Delta u = \frac{1}{\varepsilon} W'(u),\]
we compute 
    \[0 = \Div T = \sum_{i=1}^n (D_i T)(e_i, \cdot),\]
with respect to any orthonormal frame $\{e_i\}$ for $\mathbb{R}^n$. It follows that 
    \[\Div T(g)= \sum_{i =1}^n T(D_i g, e_i ) =: T \cdot Dg,\]
where $T(g)$ denotes the vector field $\sum_{i=1}^n T(g, e_i) e_i$. Integrating this identity over the region $\{\rho > \delta\}$ and applying the divergence theorem yields 
    \begin{align*}
        \int_{\{\rho > \delta\}} T \cdot Dg \, dx = - \int_{\{\rho = \delta\}} T(g) \cdot \grad \rho \, d\mathcal H^{n-1}.
    \end{align*}
Here we assume $\delta < \delta_0$ so that $\{\rho = \delta\}$ is a smooth hypersurface. Since $u$ has finite energy, the dominated convergence theorem implies that the left-hand side converges to  
    \[\int_{X} T \cdot Dg \, dx = \int_X e_\varepsilon(u)\Div g - \varepsilon \langle \nabla_{Dg} u , \nabla u \rangle  \, dx\]
as $\delta \to 0$. Therefore, to prove the claim, it suffices to establish that
    \begin{equation}\label{first var error}
    \int_{\{\rho = \delta\}} T(g) \cdot \grad \rho \, d\mathcal H^{n-1} \to 0
    \end{equation}
as $\delta \to 0$. 

In the following $C$ is a positive constant which does not depend on $\delta$. We have 
    \[T(g) \cdot \grad \rho = e_\varepsilon(u) g \cdot \grad \rho - \varepsilon \langle \nabla_g u, \nabla_{\grad \rho} u\rangle.\]
Since $g|_{\Gamma}$ is tangent to $\Gamma$, in $\{\rho \leq \delta\}$ we may estimate the normal component of $g$ by 
    \[|g^\perp| \leq C \rho.\]
Applying Proposition~\ref{boundary derivatives}, we find that for all sufficiently small $\delta$ we have 
    \begin{align*}
    \varepsilon |\langle \nabla_g u, \nabla_{\grad \rho} u\rangle| &\leq \varepsilon |\nabla_g u| |\nabla_{\grad \rho} u|\\
    &\leq  \varepsilon (|\nabla_{g^\perp} u| + |\nabla_{g^\top} u|)|\nabla_{\grad \rho} u|\\
    &\leq C (1+\varepsilon^{-1})
    \end{align*}
and
    \[e_\varepsilon(u)|g \cdot \grad \rho| \leq  C(1+\varepsilon^{-2} \rho^2 + \varepsilon^{-1}\rho)\]
in $\{\rho \leq \delta\}$. These two estimates imply \eqref{first var error}.
\end{proof}

We are now ready to derive the almost-monotonicity formula for the rescaled energy at boundary points. 

\begin{proposition}\label{energy monotonicity}
Let $u \in C^\infty(X,L)$ be a critical section for $E_\varepsilon$, and suppose $\sup_X |u| < \infty$ and $E_\varepsilon(u, B) < \infty$. Fix a point $p \in \Gamma$ and let $B_r = B_r(p)$. We then have 
%{\small
    \begin{align*}
        \frac{e^{\Lambda r}}{r^{n-1}} E(u, B_r) - \frac{e^{\Lambda s}}{s^{n-1}} E(u, B_s) &\geq - \int_s^r \frac{e^{\Lambda \tau}}{(1+n^{-1}\Lambda \tau)\tau^n} \int_{B_\tau} \bigg(\varepsilon \frac{|\nabla u|^2}{2} - \frac{W(u)}{\varepsilon} \bigg)\,dx d\tau\\
        &+\varepsilon\int_s^r \frac{1}{(1+n^{-1}\Lambda \tau)\tau^{n-1}} \int_{\partial B_\tau} |\nabla_{\nu} u|^2 \, d\mathcal{H}^{n-1} d\tau
    \end{align*}
    %}
for all $s < r < \min\{r_0, \dist(p, \partial B)\}$, where $r_0$ and $\Lambda$ are constants which depend only on $n$ and $\Gamma$, and $\nu$ is the outward unit normal to $\partial B_r$. 
\end{proposition}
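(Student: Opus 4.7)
The plan is to adapt the interior Hutchinson--Tonegawa monotonicity calculation to a boundary point $p \in \Gamma$. In the interior, one plugs the radial field $\phi(|x-p|)(x-p)$ into the first variation identity and differentiates in the cutoff parameter; here that field fails the tangency hypothesis of Lemma~\ref{first variation} because $\Gamma$ is curved. Instead I would use a surrogate $Y \in C^\infty(B, \R^n)$ that is tangent to $\Gamma$ on $\Gamma$ while deviating from $x-p$ only to second order at $p$. The associated deviations of $\Div Y$ from $n$ and of $DY$ from $I$ are then of order $|x-p|$, and it is precisely these corrections that give rise to the factors $e^{\Lambda r}$ and $(1+n^{-1}\Lambda\tau)^{-1}$ in the stated inequality. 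Concretely, let $\pi$ denote the nearest point projection onto $\Gamma$ in $\{\rho < \delta_0\}$, let $(v)^\perp$ mean the component of $v \in \R^n$ perpendicular to $T_{\pi(x)}\Gamma$, and fix a smooth cutoff $\eta$ equal to $1$ on $[0,\delta_0/2]$ and vanishing outside $[0,\delta_0)$. Setting $W(x) := (\pi(x)-p)^\perp$ and $Y(x) := (x-p) - \eta(\rho(x)) W(x)$, one checks that on $\Gamma$ the point $\pi(x) = x$, so $Y(x)$ reduces to the tangential part of $x-p$ and is indeed tangent to $\Gamma$. The smoothness of $\Gamma$ and $\pi$, together with the vanishing of $W$ and of its tangential derivative at $y = p$, yield
\[|Y(x) - (x-p)| \leq C|x-p|^2, \qquad |DY(x) - I| + |\Div Y(x) - n| \leq C|x-p|,\]
uniformly for $x \in B_\tau(p)$ with $\tau < r_0(n,\Gamma)$, where $C = C(n,\Gamma)$; here I use $\rho(x) \leq |x-p|$ since $p \in \Gamma$.

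Next I would apply Lemma~\ref{first variation} to $g_\tau := \phi(|x-p|) Y$ with $\phi$ a smooth cutoff approximating $\chi_{[0,\tau]}$. Writing $r := |x-p|$, the resulting identity splits into a \emph{principal part} (obtained by replacing $Y, DY, \Div Y$ by $x-p, I, n$), which reproduces the interior computation of \cite{HT} and equals
\[\int \phi\bigl((n-1)e_\varepsilon(u) - \xi\bigr)\,dx + \int \phi'\bigl(r\, e_\varepsilon(u) - \varepsilon r |\nabla_\nu u|^2\bigr)\,dx,\]
plus a remainder bounded in absolute value by $C\int \phi\, r\, e_\varepsilon(u)\,dx + C\int |\phi'|\, r^2\, e_\varepsilon(u)\,dx$. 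Letting $\phi \to \chi_{[0,\tau]}$ (justified exactly as in the proof of Lemma~\ref{first variation}, using Proposition~\ref{boundary derivatives} to control the integrand near $\Gamma$) and setting $F(\tau) := E_\varepsilon(u, B_\tau)$, one arrives at
\[\tau F'(\tau) - (n-1)F(\tau) \geq -\int_{B_\tau} \xi \, dx + \varepsilon\tau \int_{\partial B_\tau}|\nabla_\nu u|^2\, d\mathcal{H}^{n-1} - C\tau F(\tau) - C\tau^2 F'(\tau).\]

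Finally, choosing $\Lambda := nC$ and rearranging transforms the last inequality into $\tau F'(\tau) + \Lambda\tau F(\tau) - (n-1) F(\tau) \geq (1+n^{-1}\Lambda\tau)^{-1} A(\tau)$, where $A(\tau) := -\int_{B_\tau}\xi\, dx + \varepsilon\tau\int_{\partial B_\tau}|\nabla_\nu u|^2\,d\mathcal{H}^{n-1}$. Multiplying by the integrating factor $e^{\Lambda\tau}/\tau^n$ rewrites the left-hand side as $\tfrac{d}{d\tau}\bigl(e^{\Lambda\tau} F(\tau)/\tau^{n-1}\bigr)$, and integrating from $s$ to $r$ delivers the claimed monotonicity; in the $|\nabla_\nu u|^2$ term the factor $e^{\Lambda\tau} \geq 1$ produced by the integration may be harmlessly discarded because the integrand is nonnegative, matching the form stated in the proposition. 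The main obstacle is the construction of $Y$ and the verification of its sharp estimates uniformly up to $\Gamma$; the essential ingredient making the scheme work is the quadratic vanishing of the normal part of $y-p$ at $y = p \in \Gamma$, which forces the monotonicity error to be $O(\tau)$ and thus absorbable via the Gronwall-type integrating factor $e^{\Lambda\tau}$.
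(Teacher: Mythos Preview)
Your proposal is correct and follows essentially the same route as the paper. Both arguments apply Lemma~\ref{first variation} to a vector field that agrees with $x-p$ up to a second-order correction and is tangent to $\Gamma$, then pass to the sharp cutoff, estimate the resulting $O(\tau)$ errors, and absorb them via the integrating factor $e^{\Lambda\tau}/\tau^{n-1}$ with $\Lambda = nC$. The paper's choice of tangential field is slightly different from yours: it takes $g = \grad G$ with $G(x) = \tfrac{1}{2}\dist_\Gamma(\pi(x),p)^2 + \tfrac{1}{2}\rho(x)^2$, whereas you subtract off the normal component of the chord $\pi(x)-p$ directly. Both constructions yield the same key bounds $|Y-(x-p)| \leq C|x-p|^2$ and $|DY - I| \leq C|x-p|$, and from that point on the two calculations are identical, including the use of $\tfrac{n-1-C\tau}{1+C\tau} \geq n-1-\Lambda\tau$ and the observation that $e^{\Lambda\tau} \geq 1$ may be dropped from the nonnegative $|\nabla_\nu u|^2$ term.
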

\begin{proof}
We may assume $p = 0$. Let $r_0 < \delta_0$ be a small constant to be refined as we proceed. We have that $\rho$ is smooth in $\{\rho < r_0\}$ and the nearest point projection
    \[\pi : \{\rho < r_0\} \to \Gamma\]
is well defined and smooth. We define 
    \[G(x) := \frac{1}{2}\dist_{\Gamma}(\pi(x),0)^2 + \frac{1}{2}\rho(x)^2\]
for $x \in B_{r_0}$, where $\dist_\Gamma$ is the geodesic distance on $\Gamma$. This function is smooth in $B_{r_0}$ provided $r_0$ is smaller than the injectivity radius of $\Gamma$. Note that if $\Gamma$ were affine, we would simply have $G(x) = \frac{1}{2} |x|^2$. Let 
    \[g(x) = \grad G (x).\]
Then $g|_{\Gamma}$ is tangential to $\Gamma$. Moreover, performing a Taylor expansion at the origin shows that we can write $g(x) = x + h(x)$ where $h$ satisfies  
    \begin{equation}\label{h bounds}
    |h(x)| \leq C|x|^2, \qquad |D h(x)| \leq C |x|
    \end{equation}
for $x \in B_{r_0}$, where $C$ is a constant depending only on $n$ and $\Gamma$.

Now suppose $0 < r < \min\{r_0, \dist(p, \partial B)\}$. By Lemma~\ref{first variation}, for any $\varphi \in C^1_0(B_{r})$, we have 
        \[0 = \int_X e_{\varepsilon}(u)\Div (\varphi g) - \varepsilon \langle \nabla_{D(\varphi g)} u , \nabla u \rangle \, dx.\]
Expanding and rearranging gives
    \begin{align*}
        \int_X e_\varepsilon(u) \varphi \Div g - \varepsilon \varphi \langle \nabla_{Dg} u ,\nabla u \rangle \, dx = -\int_X e_\varepsilon(u)\grad \varphi \cdot  g - \varepsilon   \langle \nabla_{g} u , \nabla_{\grad \varphi} u\rangle \,dx.
    \end{align*}
If we allow $\varphi$ to increase to the characteristic function of $B_{r}$ in an appropriate manner, we obtain
    \begin{align*}
        \int_{B_r} e_\varepsilon(u)\Div g - \varepsilon \langle \nabla_{Dg} u ,\nabla u \rangle  \, dx = \int_{\partial B_r} e_\varepsilon(u) (\nu \cdot g) - \varepsilon \langle \nabla_g u, \nabla_\nu u\rangle \, d\mathcal{H}^{n-1}.
    \end{align*}
    
We now insert $g = x + h = r \nu + h$, and so find that
%{\small 
    \begin{align*}
        (n-1)&\int_{B_r} e_\varepsilon(u) - \int_{B_r} \bigg(\varepsilon \frac{|\nabla u|^2}{2} - \frac{W(u)}{\varepsilon} \bigg) + \int_{B_r} \bigg(e_\varepsilon(u)\Div h - \varepsilon \langle \nabla_{Dh} u ,\nabla u \rangle \bigg)\\
        & = r\int_{\partial B_r} e_\varepsilon(u) - \varepsilon r \int_{\partial B_r} |\nabla_\nu u|^2 + \int_{\partial B_r} \bigg(e_\varepsilon(u) (\nu \cdot h) - \varepsilon \langle \nabla_h u, \nabla_\nu u\rangle\bigg).
    \end{align*}
%}
Appealing to \eqref{h bounds} we obtain
    \begin{align*}
        -\int_{B_r} \bigg(e_\varepsilon(u)\Div h - \varepsilon \langle \nabla_{Dh} u ,\nabla u \rangle \bigg) \leq C r\int_{B_r} e_\varepsilon(u)
    \end{align*}
and 
    \begin{align*}
        \int_{\partial B_r} \bigg(e_\varepsilon(u) (\nu \cdot h) - \varepsilon \langle \nabla_h u, \nabla_\nu u\rangle\bigg) \leq Cr^2 \int_{\partial B_r} e_\varepsilon(u),
    \end{align*}
and so conclude that 
    \begin{align*}
        \frac{n-1-C r}{1+Cr}&\int_{B_r} e_\varepsilon(u) - \frac{1}{1+Cr}\int_{B_r} \bigg(\varepsilon \frac{|\nabla u|^2}{2} - \frac{W(u)}{\varepsilon} \bigg) \\
        & \leq r\int_{\partial B_r} e_\varepsilon(u) - \frac{1}{1+Cr}\varepsilon\int_{\partial B_r} |\nabla_\nu u|^2.
    \end{align*}
Let $\Lambda = n C$. Combining the last estimate with
    \begin{align*}
        \frac{d}{dr} \bigg(\frac{e^{\Lambda r}}{r^{n-1}} E(u, B_r)\bigg) &= -(n-1 - \Lambda r) \frac{e^{\Lambda r}}{r^n} E(u,B_r) + \frac{e^{\Lambda r}}{r^{n-1}} \int_{\partial B_r} e_\varepsilon(u),
    \end{align*}
and the inequality
    \[\frac{n-1-Cr}{1+Cr} \geq n-1-\Lambda r,\]
we obtain
    \begin{align*}
        \frac{d}{dr} \bigg(\frac{e^{\Lambda r}}{r^{n-1}} E(u, B_r)\bigg) &\geq - \frac{e^{\Lambda r}}{(1+Cr)r^n} \int_{B_r} \bigg(\varepsilon \frac{|\nabla u|^2}{2} - \frac{W(u)}{\varepsilon} \bigg)\\
        &+ \frac{e^{\Lambda r}}{(1+Cr)r^{n-1}} \varepsilon  \int_{\partial B_r} |\nabla_\nu u|^2.
    \end{align*}
Integrating from $s$ to $r$ now gives the claim.
\end{proof}

\subsection{Interior estimates} We will eventually show that, in the setting of \eqref{sequence of sections}, the almost-monotonicity formula established in Proposition~\ref{energy monotonicity} improves, so that the rescaled energy becomes genuinely monotone as $\varepsilon \to 0$. This requires showing that the terms on the right-hand of the almost-monotonicity formula become nonnegative as $\varepsilon \to 0$. To this end, we use the maximum principle to establish supremum and gradient estimates for solutions of the scalar Allen--Cahn equation defined in a ball in $\mathbb{R}^n$. Up to rescaling and choosing a unit section, these apply to critical sections of $L$ in any ball $B' \subset X$. Our first estimate is slightly stronger than \cite[Proposition~3.2]{HT}. 

\begin{lemma}\label{interior sup}
Let $u$ be a $C^2$ function defined on $\bar B_{r+kR}$, where $k \geq 1$ is an integer and $R \geq 2$. Suppose $u$ solves the scalar Allen--Cahn equation $\Delta u = W'(u)$, and that $|u| \leq C_0$. We then have 
    \[\sup_{B_{r}} |u| \leq 1 + 2^{1+k}C_0R^{-2k}.\]
\end{lemma}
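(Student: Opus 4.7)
The plan is to prove the bound by iteration, with each step shrinking the ball by $R$ and reducing $(|u|-1)_+$ by a factor of $2/R^2$. Specifically, I aim to establish the per-step estimate
\[
\sup_{B_{r+(j-1)R}}(|u|-1)_+ \leq \frac{2}{R^2}\sup_{B_{r+jR}}(|u|-1)_+
\]
for each $j = 1, \ldots, k$. Starting from the trivial base $\sup_{B_{r+kR}}(|u|-1)_+ \leq C_0$ and composing $k$ steps then gives $\sup_{B_r}(|u|-1)_+ \leq 2^k C_0 R^{-2k}$, and the additional factor of $2$ in the statement is absorbed as slack.

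For the per-step estimate I would observe that on the open set $\{|u| > 1\}$ the function $w := |u| - 1$ is smooth and satisfies
\[
\Delta w \;=\; |u|\bigl(|u|^2 - 1\bigr) \;\geq\; 2w,
\]
since $\Delta u = u(u^2 - 1)$ and $|u|(|u|+1)\geq 2$ whenever $|u|\geq 1$. Hence $w_+$ is a non-negative weak subsolution of the linear operator $\Delta - 2$ on $B_{r+kR}$. Fix any $x_0 \in \bar B_{r+(j-1)R}$ so that $B_R(x_0) \subset B_{r+jR}$ and set $M_j := \sup_{B_{r+jR}} w_+$. I would construct a non-negative radial super-solution $\phi$ of $\Delta\phi = 2\phi$ on $B_R(x_0)$ taking the boundary value $M_j$ on $\partial B_R(x_0)$. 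The strong maximum principle for $\Delta - 2$, applied to $\phi - w_+$ on each connected component of $\{w > 0\}\cap B_R(x_0)$, will give $w_+(x_0) \leq \phi(x_0)$: indeed, $w_+ = 0$ on the relative boundary of any such component inside $B_R(x_0)$, while $w_+ \leq M_j = \phi$ on $\partial B_R(x_0)$, so an interior negative minimum of $\phi - w_+$ would contradict the supersolution inequality $-\Delta(\phi - w_+) + 2(\phi - w_+) \geq 0$.

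The main technical obstacle is choosing $\phi$ so that $\phi(x_0) \leq (2/R^2) M_j$, with no hidden dimensional constants. A naive quadratic barrier $\phi(x) = \alpha + \beta |x - x_0|^2$ super-solving $\Delta - 2$ forces $\alpha \geq n\beta$ and only achieves the ratio $n/(n+R^2)$, which suffices when $n \leq 4$ and $R = 2$ but loses in general. A cleaner choice is the rescaled modified Bessel profile $\phi(\rho) := M_j\, \rho^{1-n/2} I_{n/2 - 1}(\sqrt 2\, \rho)\big/\bigl[R^{1-n/2} I_{n/2-1}(\sqrt 2\, R)\bigr]$, which satisfies $\Delta\phi = 2\phi$ identically and dominates $w_+$ on $\partial B_R(x_0)$ by construction. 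Using the power-series asymptotics $I_\nu(z) \sim (z/2)^\nu/\Gamma(\nu + 1)$ at $0$ and $I_\nu(z) \sim e^z/\sqrt{2\pi z}$ at infinity, one checks that $\phi(x_0)/M_j$ is actually much smaller than $2/R^2$ for every $R \geq 2$ and $n \geq 2$, so the desired per-step ratio is secured. Once the per-step estimate is in hand, the iteration and the passage from $w$ to $|u|$ are routine arithmetic.
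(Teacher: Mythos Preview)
Your approach---observe that $(|u|-1)_+$ is a weak subsolution of $\Delta-2$ and dominate it by the exact radial solution built from $I_{n/2-1}$---is genuinely different from the paper's, which instead applies the maximum principle to $\psi = u - \zeta - 1$ for an ad~hoc cutoff $\zeta$ with $|\Delta\zeta|\leq 2C_0R^{-2}$. Both routes exploit $W''\geq 2$ on $\{|u|\geq 1\}$ and then iterate; yours isolates the linear mechanism more transparently and avoids constructing $\zeta$.

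The gap is your assertion that $\phi(x_0)/M_j \leq 2/R^2$ for every $R\geq 2$ and $n\geq 2$. The expansion $I_\nu(z)\sim (z/2)^\nu/\Gamma(\nu+1)$ holds not only as $z\to 0$ but also as $\nu\to\infty$ with $z$ fixed; in that regime $\phi(R)$ and $\phi(0)$ agree to leading order and the ratio tends to $1$. Concretely, for $n=10$ and $R=2$ one finds $I_4(2\sqrt 2)\approx 0.246$ and hence $\phi(0)/\phi(R)\approx 0.68 > 2/R^2 = 0.5$, so the per-step inequality already fails. No cleverer barrier repairs this: the Bessel profile is the \emph{sharp} radial supersolution of $\Delta\phi=2\phi$, and your own quadratic computation giving $n/(n+R^2)$ already captured the correct order. (Probabilistically, $\phi(0)/\phi(R)=\mathbb E_0[e^{-2\tau_R}]$ with $\tau_R\sim R^2/n$ the exit time of Brownian motion from $B_R$, hence close to $1$ when $n\gg R^2$.) The paper's argument runs into the same obstruction in its claim that $\zeta$ with $|\Delta\zeta|\leq 2C_0R^{-2}$ exists: integrating $\Delta\zeta$ over $B_{r+kR}$ and using $\zeta=C_0$ on the boundary forces $(r+kR)^2\geq n(R^2-2)$, which fails for small $r$, $k=1$, $R=2$ once $n\geq 3$. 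Either argument becomes correct after replacing $R\geq 2$ by $R\geq R_0(n)$, and that is all the downstream application (the next proposition, whose constants already depend on $n$) actually requires.
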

\begin{proof}
Let $\zeta$ be a smooth function on $\mathbb{R}^n$ such that $\zeta = 2C_0R^{-2}$ in $B_{r+(k-1)R}$ and $\zeta \geq C_0$ on $\partial B_{r+kR}$. We may assume $2C_0R^{-2} \leq \zeta \leq C_0$ and $|\Delta \zeta| \leq 2C_0R^{-2}$. With the aim of deriving a contradiction, suppose
    \[\sup_{B_{r+(k-1)R}} u \geq 1 + 4C_0R^{-2}.\]
Let $\psi := u - \zeta - 1$. Since $\psi$ is negative on $\partial B_{r+kR}$, we have that $\psi$ attains a maximum of at least $2C_0R^{-2}$ at some point $x_0 \in B_{r+kR}$. At the point $x_0$,
    \begin{align*}
        0 &\geq \Delta \psi\\
        &=W'(u) - \Delta \zeta\\
        &\geq \psi \int_0^1 W''(tu+(1-t)(\zeta+1))\,dt + W'(\zeta+1) - 2C_0R^{-2}\\
        &> 2\psi - 2C_0R^{-2},
    \end{align*}
where $W''(u) = 3u^2 - 1$. This is a contradiction, so in fact
    \[\sup_{B_{r+(k-1)R}} u \leq 1+4C_0R^{-2}.\]

We now repeat the same argument, but with $\zeta$ chosen so that $\zeta = C_0$ on $\partial B_{r+(k-1)R}$, $\zeta = 2C_0R^{-2}$ in $B_{r+(k-2)R}$, and $|\Delta \zeta| \leq 2C_0R^{-2}$. If $\sup_{B_{r+(k-2)R}} u \geq 1+8C_0R^{-4}$, we obtain a contradiction by applying the maximum principle to $\psi:=u - 4R^{-2}\zeta - 1$, exactly as before. Continuing in this way, we obtain the desired upper bound for $u$ after $k$ iterations. The lower bound is analogous.
\end{proof}

The next estimate slightly improves \cite[Proposition~3.3]{HT}.

\begin{proposition}\label{discrepancy upper}
Let $u \in C^2(\bar B_{r+3R})$ be a solution of $\Delta u = W'(u)$ such that $|u| \leq C_0$. Define
    \[\xi := \frac{|\nabla u|^2}{2} - W(u), \qquad \xi_+ := \max\{\xi, 0\}.\]
There are constants $R_0$ and $C$ depending only on $n$ and $C_0$ such that if $R \geq R_0$ then
    \[\sup_{B_{r}} \xi \leq C \max\bigg\{ R^{-2/3} \sup_{\partial B_{r+R}} \xi_+^{2/3}, R^{-2}\bigg\}.\]
Moreover, for every $\delta > 0$ there is an integer $k_0 = k_0(\delta)$ with the following property. If $u \in C^\infty(\bar B_{r+(k_0 + 2)R})$ is a solution of $\Delta u = W'(u)$ such that $|u| \leq C_0$, and $R \geq R_0$, then we have 
    \[\sup_{B_r} \xi \leq C R^{-2+\delta},\]
where $C = C(n,C_0,\delta)$. 
\end{proposition}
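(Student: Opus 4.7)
The strategy is to follow and lightly refine \cite[Proposition 3.3]{HT}. The starting point is a Modica-type differential inequality for $\xi$. From $\Delta u = W'(u)$ one computes directly
$$\Delta \xi = |\nabla^2 u|^2 - (W'(u))^2.$$
Combining the refined Kato inequality $|\nabla^2 u|^2 \geq |\nabla^2 u \cdot \nabla u|^2/|\nabla u|^2$ with the identity $\nabla^2 u \cdot \nabla u = \nabla \xi + W'(u)\nabla u$, one obtains on $\{|\nabla u|>0\}$ the inequality
$$\Delta \xi \geq \frac{|\nabla \xi|^2}{|\nabla u|^2} + \frac{2 W'(u)}{|\nabla u|^2}\,\nabla u \cdot \nabla \xi.$$
On $\{\xi > 0\}$ one has $|\nabla u|^2 \geq 2\xi > 0$, so this degenerate elliptic inequality is non-singular wherever $\xi$ is positive.

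Next I would prove the first (sup-on-sphere) estimate. By Lemma \ref{interior sup} with $k=1$ (and $R$ large) one has $|u| \leq 1 + C R^{-2}$ on $B_{r+2R}$, and standard interior gradient estimates for the scalar semilinear equation $\Delta u = W'(u)$ then give $|\nabla u|, |W'(u)| \leq C(n,C_0)$ on $B_{r+R}$. Hence on the set $\{\xi \geq c R^{-2}\}$ the coefficients of the above inequality are bounded, and $\xi_+$ is a subsolution of a uniformly elliptic equation with bounded coefficients. A subsolution mean-value/barrier argument (comparing $\xi_+$ with an explicit barrier of the form $A(r+R-|x|)^{-\alpha}$) then produces
$$\sup_{B_r}\xi \leq C\,\max\bigl\{R^{-2/3}\sup_{\partial B_{r+R}}\xi_+^{2/3},\; R^{-2}\bigr\};$$
the exponent $2/3$ and the scale factor $R^{-2/3}$ come out of balancing the barrier's boundary data against its interior value, while the floor $R^{-2}$ reflects the regime in which the Modica-type inequality degenerates (the coefficient $W'(u)/|\nabla u|^2$ ceases to be controlled as $|\nabla u|\to 0$).

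The ``moreover'' statement then follows by iteration of the first estimate. Setting $a_k := \sup_{B_{r+kR}}\xi_+$ for $k=0,1,\dots,k_0$, each pair of nested balls gives
$$a_{k-1} \leq C\,\max\bigl\{R^{-2/3} a_k^{2/3},\; R^{-2}\bigr\}.$$
Starting from the a priori bound $a_{k_0+2}\leq C(n,C_0)$ (Lemma \ref{interior sup} together with the standard gradient estimate), $k_0$ iterations of the non-trivial branch yield
$$a_0 \leq C\,R^{-2(1-(2/3)^{k_0})}\,a_{k_0}^{(2/3)^{k_0}} \leq C\,R^{-2 + 2(2/3)^{k_0}}.$$
Choosing $k_0 = k_0(\delta)$ so that $2(2/3)^{k_0} \leq \delta$ produces $\sup_{B_r}\xi \leq C R^{-2+\delta}$, as required.

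The main obstacle is the first estimate with the sharp exponent $2/3$: one has to manage the degeneracy of the Modica-type inequality at $\{|\nabla u|=0\}$ and choose the barrier so as to extract precisely a $2/3$-power of the boundary supremum rather than the weaker $1/2$-power one would get from a crude application of the maximum principle. Once the first estimate is in place the iteration is routine, using only the uniform a priori bound on $\xi$ from Lemma \ref{interior sup} and elementary gradient estimates.
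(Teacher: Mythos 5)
Your computation of the Modica-type inequality is correct, and your iteration at the end (the recursion $a_{k-1}\leq C\max\{R^{-2/3}a_k^{2/3},\,R^{-2}\}$, giving exponents $p_l=\tfrac{2}{3}(1+p_{l-1})\to 2$) is exactly the paper's second step. The gap is in the first estimate, which is the heart of the proposition: you assert that a ``subsolution mean-value/barrier argument'' with a barrier of the form $A(r+R-|x|)^{-\alpha}$ produces the factor $R^{-2/3}$ and the exponent $2/3$, but no mechanism is given, and the one concrete claim you make on the way is false. On $\{\xi\geq cR^{-2}\}$ you only know $|\nabla u|^2\geq 2cR^{-2}$, so the drift coefficient $2W'(u)/|\nabla u|^2$ is bounded only by $CR^{2}$, not by a constant independent of $R$; hence $\xi_+$ is not a subsolution of a uniformly elliptic equation with $R$-independent coefficients, and any mean-value or barrier constant extracted that way degenerates as $R\to\infty$, which is fatal since the whole point is decay in $R$. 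Moreover the raw inequality $\Delta\xi\geq \frac{|\nabla\xi|^2}{|\nabla u|^2}+\frac{2W'(u)}{|\nabla u|^2}\nabla u\cdot\nabla\xi$ has no zeroth-order positivity, so in the regime where $|u(x)|$ is close to $1$ (where $W(u)$ and $W'(u)$ are tiny) it gives essentially no information; that is precisely the regime one must control to get any rate in $R$.

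The paper resolves both difficulties with an auxiliary function rather than a barrier: it considers $\xi_G=\tfrac12|\nabla u|^2-W(u)-G(u)$ with $G(u)=\lambda(1-u^2/Q)$, $\lambda=Q(\mu/R)^{2/3}$, $\mu=\max\{\sup_{\partial B_{r+R}}\xi_+,\,125R^{-2}\}$, and applies the maximum principle to $\tilde\xi=\xi_G+\mu\zeta$ for a cutoff $\zeta$ with $|\nabla\zeta|\leq 2R^{-1}$. At an interior maximum $x_0$ one has $|\nabla\xi_G|\leq 2\mu R^{-1}$ and $|\nabla u|\geq\lambda^{1/2}$, so the large drift coefficient is offset by the smallness of $|\nabla\xi_G|$ there; the $2/3$ exponent then comes out of balancing the error term $\mu|W'(u)|\lambda^{-1/2}R^{-1}$ against the zeroth-order positivity created by $G$ (terms such as $Q^{-1}\lambda W(u)$ and $Q^{-2}\lambda^2u^2$), which forces the choice $\lambda\sim(\mu/R)^{2/3}$. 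The case $|u(x_0)|\geq 1$ additionally requires the refined bound $|u|\leq 1+8C_0R^{-4}$, i.e.\ two iterations of Lemma~\ref{interior sup} (this is why the hypothesis provides the larger ball $\bar B_{r+3R}$); your sketch uses only $k=1$, giving $|u|\leq 1+CR^{-2}$, which is not small enough to make the term involving $uW'(u)$ negligible against $Q^{-2}\lambda^{2}$ when $\mu$ sits at the floor $125R^{-2}$. So while your outline is directionally reasonable and the iteration is fine, the key estimate is not actually proved, and the step you do spell out would not survive the dependence on $R$.
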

\begin{proof}
First observe that for $R \geq 2$, Lemma~\ref{interior sup} implies
    \[\sup_{B_{r+R}}|u|\leq1+8C_0R^{-4}.\]

Let us define
    \[\xi_G := \frac{|\nabla u|^2}{2} - W(u) - G(u),\]
where $G:\mathbb{R}\to\mathbb{R}$ is a smooth function to be chosen later. At any point where $|\nabla u| > 0$, we have 
    \begin{align*}
        \Delta \xi_G - 2&\frac{(W'(u) + G'(u))}{|\nabla u|^2} \nabla \xi_G \cdot \nabla u + 2G''(u)\xi_G\\
            &\geq G'(u)^2 + G'(u)W'(u) - 2G''(u)(W(u) + G(u)).
    \end{align*}
We set
    \[\mu := \max \bigg\{\max_{\partial B_{r+R}} \xi_+, \frac{125}{R^2}\bigg\}\]
and
    \[G(u) := \lambda (1-u^2/Q), \qquad \lambda := Q\bigg(\frac{\mu}{R}\bigg)^{2/3},\]
where $Q \geq 8$ will be chosen later. Let $\zeta \in C^\infty(\bar B_{r+R})$ be such that 
    \[\zeta = 1 \text{ in } B_{r}, \;\;\;\; 0 \leq \zeta \leq 1 \text{ in } B_{r+R}, \;\;\;\; |\nabla \zeta| \leq 2R^{-1}, \;\;\;\; |\Delta \zeta| \leq 2R^{-2}.\]
Define $\tilde \xi := \xi_G + \mu \zeta$. 

With the aim of deriving a contradiction, suppose $\sup_{B_r} \xi_G \geq \lambda$. It follows that $\sup_{B_r} \tilde \xi \geq \lambda + \mu$. On the other hand, $\tilde \xi \leq \mu$ on $\partial B_{r+R}$, so we conclude that $\tilde \xi$ attains an interior mamximum at some point $x_0 \in B_{r+R}$. At the point $x_0$ we have 
    \[|\nabla u| \geq \lambda^{1/2}, \qquad |\nabla \xi_G| = \mu|\nabla \zeta| \leq 2\mu R^{-1}, \qquad \Delta \xi_G = - \mu \Delta \zeta \leq 2\mu R^{-2}.\]
Therefore, at $x_0$, 
    \begin{align*}
        \Delta \xi_G - 2&\frac{(W'(u) + G'(u))}{|\nabla u|^2} \nabla \xi_G \cdot \nabla u + 2G''(u)\xi_G\\
            &\leq 2 \mu R^{-2} + 4 \mu |W'(u)| \lambda^{-1/2}R^{-1} + 8 \mu Q^{-1} \lambda^{1/2}R^{-1},
    \end{align*}
and
    \begin{align*}
        G'(u)^2 + G'(u)&W'(u) - 2G''(u)(W(u) + G(u)) \\
        &\geq 4Q^{-2} \lambda^{2}u^2 - 2Q^{-1}\lambda u W'(u) + 4Q^{-1}\lambda W(u).
    \end{align*}
Consequently, at the point $x_0$,
        \begin{align}\label{discrepancy max}
        2 \mu R^{-2} + 4 \mu & |W'(u)| \lambda^{-1/2}R^{-1} + 8 \mu Q^{-1} \lambda^{1/2}R^{-1} \notag \\
        &\geq 4Q^{-2} \lambda^{2}u^2 - 2Q^{-1}\lambda u W'(u) + 4Q^{-1}\lambda W(u).
    \end{align}
We consider three cases. 

\textit{Case 1.} If $|u(x_0)| \in [0,1/2]$ then \eqref{discrepancy max} gives 
        \[2 \mu R^{-2} + 4 \mu \alpha \lambda^{-1/2}R^{-1} + 8 \mu Q^{-1} \lambda^{1/2}R^{-1} \geq 2Q^{-1}\lambda W(u) \geq 4\beta Q^{-1} \lambda,\]
where
    \[\alpha := \max_{s \in [0,1/2]}|W'(s)|, \qquad \beta := W(1/2).\]
Inserting $\lambda = Q(\mu/ R)^{2/3}$, we obtain
        \[2 \mu R^{-2} + 4 \alpha \mu^{2/3} Q^{-1/2} R^{-2/3} + 8 \mu^{4/3} Q^{-1/2} R^{-4/3} \geq 4\beta \mu^{2/3} R^{-2/3},\]
and hence
    \begin{equation*}%\label{discrepancy case 1}
        2\mu^{1/3} R^{-4/3} + 4 \alpha Q^{-1/2} + 8 Q^{-1/2}\mu^{4/3} R^{-2/3} \geq 4\beta.
    \end{equation*}
Since $u$ is smooth and bounded in $B_{r+3R}$, standard elliptic estimates imply that $\mu$ can be bounded from above in terms of $n$ and $C_0$, so we obtain a contradiction provided that $Q$ and $R$ are sufficiently large depending on $n$, $C_0$, $\alpha$ and $\beta$.
    
\textit{Case 2.} If instead $|u(x_0)| \in [1/2,1]$, we estimate 
    \begin{align*}
        4Q^{-2} \lambda^{2}u^2 - 2Q^{-1}\lambda u W'(u) + 4Q^{-1}\lambda W(u) \geq Q^{-2} \lambda^{2} + Q^{-1}\lambda |W'(u)|,
    \end{align*}
and so obtain
    \[2 \mu R^{-2} +  |W'(u)| (4 \mu\lambda^{-1/2}R^{-1} - Q^{-1}\lambda) + 8 \mu Q^{-1} \lambda^{1/2}R^{-1} \geq Q^{-2} \lambda^{2}\]
from \eqref{discrepancy max}. Inserting $\lambda = Q (\mu/R)^{2/3}$ yields 
    \[2\mu R^{-2} + |W'(u)| (4Q^{-1/2} - 1)\mu^{2/3}R^{-2/3} + 8 Q^{-1/2} \mu^{4/3} R^{-4/3} \geq \mu^{4/3} R^{-4/3}.\]
For $Q \geq 16^2$ we obtain
    \[4\mu R^{-2} \geq \mu^{4/3}R^{-4/3}\]
and hence
    \[4 R^{-2/3} \geq \mu^{1/3}.\]
Recall however that $\mu \geq 125/R^2$. We have therefore arrived at a contradiction.

\textit{Case 3.} Finally, we consider the case that $|u(x_0)| \geq 1$. Recall we used Lemma~\ref{interior sup} to conclude that $|u(x_0)| \leq 1+8C_0R^{-4}$. At $x_0$ we have 
    \[2 \mu R^{-2} + 4 \mu W'(u) \lambda^{-1/2}R^{-1} + 8 \mu Q^{-1} \lambda^{1/2}R^{-1} \geq 4Q^{-2} \lambda^{2} - 2Q^{-1}\lambda u W'(u).\]
Since $|u| \in [1, 1+8C_0R^{-4}]$, provided $R$ is large enough so that $8C_0R^{-4} \leq 1$ we have 
    \[W'(u) \leq 48C_0R^{-4}, \qquad -uW'(u)\geq-480C_0R^{-4},\]
and hence 
    \[2 \mu R^{-2} + 184 C_0 \mu \lambda^{-1/2}R^{-5} + 8 \mu Q^{-1} \lambda^{1/2}R^{-1} \geq 4Q^{-2} \lambda^{2} - 960 C_0 Q^{-1}\lambda R^{-4}.\]
Inserting $\lambda = Q\mu^{2/3}R^{-2/3}$ we obtain
    \[2 \mu R^{-2} + (184 Q^{-1/2} + 960) C_0 \mu^{2/3} R^{-4-2/3}  \geq (4 - 8 Q^{-1/2})\mu^{4/3}R^{-4/3}.\]
For $Q \geq 16^2$, since $\mu \geq 125R^{-2}$,
    \[(4 - 8 Q^{-1/2})\mu^{4/3}R^{-4/3} \geq \mu^{1/3} R^{-4/3} \mu  + \mu^{2/3}R^{-4/3} \mu^{2/3} \geq 5 \mu R^{-2} + 25 \mu^{2/3}R^{-2-2/3},\]
and hence 
    \[2 \mu R^{-2} + (184 Q^{-1/2} + 960) C_0 \mu^{2/3} R^{-4-2/3} \geq 5 \mu R^{-2} + 25 \mu^{2/3}R^{-2-2/3}.\]
This is a contradiction provided $R$ is sufficiently large depending on $C_0$.

Combining the three cases, we obtain $\sup_{B_r} \xi_G \leq \lambda$. This in turn gives
    \[\sup_{B_{r}} \xi \leq C \max\bigg\{ R^{-2/3} \sup_{\partial B_{r+R}} \xi_+^{2/3}, R^{-2}\bigg\},\]
with $C := 50Q$.

Now suppose $u$ is defined in $\bar B_{r+(k+2)R}$ and satisfies $|u| \leq C_0$. The estimate just derived (applied with $r + (k-l)R$ in place of $r$) tells us that
    \[\sup_{B_{r+(k-l)R}} \xi \leq C \max\bigg\{ R^{-2/3} \sup_{\partial B_{r+(k-l+1)R}} \xi_+^{2/3}, R^{-2}\bigg\}.\]
Applying this estimate iteratively, we obtain
    \[\sup_{B_{r+(k-l)R}} \xi \leq C_l R^{-p_l}\]
for each $l \leq k$, where $p_l$ satisfies the relation $p_l = \tfrac{2}{3}(1+p_{l-1})$. It follows that as $l\to\infty$ we have $p_l \to 2$. So if $k$ is sufficiently large depending on $\delta$, then $p_k \geq 2 - \delta$, and hence
    \[\sup_{B_r} \xi \leq C R^{-2+\delta},\]
where $C = C(n,C_0,\delta)$. This completes the proof. 
\end{proof}

\subsection{$L^1$-convergence of the discrepancy} We assume \eqref{local} and \eqref{sequence of sections}. We write $\xi_k$ for the discrepancy of $u_k$, namely 
   \[\xi_k := \varepsilon_k \frac{|\nabla u_k|^2}{2} - \frac{W(u_k)}{\varepsilon_k}\]
The main result we want to prove is the following (cf.~\cite[Proposition~4.3]{HT}).

\begin{proposition}\label{discrepancy integral}
As $k \to \infty$ the discrepancy $\xi_k \to 0$ in $L^1_{\loc}(B)$. 
\end{proposition}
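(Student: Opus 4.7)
The strategy combines the boundary derivative bounds (Proposition~\ref{boundary derivatives}), the pointwise upper bound on the discrepancy (Proposition~\ref{discrepancy upper}), the almost-monotonicity formula (Proposition~\ref{energy monotonicity}), and the interior $L^1$-decay of the discrepancy established in \cite[Proposition~4.3]{HT}. The new difficulty relative to the interior setting of \cite{HT} is that $\xi_k$ can be unbounded from above near $\Gamma$, as shown by the example in the introduction.

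The first step is to establish $\xi_k^+ \to 0$ in $L^1_{\loc}(B)$. Fix a compact $K \subset B$ and a small $\sigma \in (0,1)$. An iterated application of Proposition~\ref{discrepancy upper}, after rescaling by $\varepsilon_k$, gives the pointwise estimate $\xi_k^+(x) \leq C\rho(x)^{-2+\sigma}\varepsilon_k^{1-\sigma}$ for $x$ with $\rho(x) \geq R\varepsilon_k$, where $R$ and $C$ depend on $n$, $C_0$, and $\sigma$. Working in Fermi coordinates around $\Gamma$ (volume element $d\mathcal{H}^{n-2}(y)\cdot r\,dr\,d\theta$ in the normal $2$-disc), the function $\rho^{-2+\sigma}$ is locally integrable, and integration yields $\int_{K\cap\{\rho>R\varepsilon_k\}} \xi_k^+\,dx \leq C\varepsilon_k^{1-\sigma}$. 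In the tube $\{\rho \leq R\varepsilon_k\}$, Proposition~\ref{boundary derivatives} bounds $\varepsilon_k|\nabla u_k|^2 \leq C\rho^{-1}$, whose integral is $O(R\varepsilon_k)$. Together these imply $\int_K \xi_k^+\,dx \to 0$.

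The second step upgrades the almost-monotonicity formula of Proposition~\ref{energy monotonicity} into a uniform upper density bound for the energy measure at boundary points. Setting $\Phi_k(r) := e^{\Lambda r}r^{-(n-1)}E_{\varepsilon_k}(u_k,B_r(p))$ for $p \in \Gamma$ with $\overline{B_{r_0}(p)} \subset B$, the formula reads
\[
\Phi_k(s) \leq \Phi_k(r) + \int_s^r \frac{e^{\Lambda\tau}}{(1+n^{-1}\Lambda\tau)\tau^n}\int_{B_\tau(p)} \xi_k\,dx\,d\tau
\]
for $0 < s < r \leq r_0$. Bounding $\int_{B_\tau(p)}\xi_k \leq \int_{B_\tau(p)}\xi_k^+$ and applying the decay from the first step uniformly in $p \in \Gamma \cap K$ and in $\tau$ varying in compact subintervals of $(0,r_0]$, together with the crude bound $\Phi_k(r_0)\leq C_1 e^{\Lambda r_0} r_0^{-(n-1)}$, one concludes $\Phi_k(\delta) \leq C$ for each fixed $\delta > 0$ and all $k$ sufficiently large, with $C$ independent of $\delta$ and $p$. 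Covering $\{\rho<\delta\}\cap K$ by $O(\delta^{-(n-2)})$ balls of radius $\delta$ centered at points of $\Gamma \cap K$ then produces the uniform density bound $\limsup_{k\to\infty} E_{\varepsilon_k}(u_k, K \cap \{\rho < \delta\}) \leq C\delta$.

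To conclude, I split and combine: $\int_K |\xi_k|\,dx \leq \int_{K\cap\{\rho<\delta\}} |\xi_k|\,dx + \int_{K\cap\{\rho\geq\delta\}} |\xi_k|\,dx$. Since $|\xi_k| \leq e_{\varepsilon_k}(u_k)$, the first term is bounded above by $C\delta$ in the limit. The second term vanishes as $k \to \infty$ by the interior $L^1$-decay of \cite[Proposition~4.3]{HT}, since $K\cap\{\rho\geq\delta\}$ is compactly contained in $X$. Hence $\limsup_k \int_K|\xi_k|\,dx \leq C\delta$ for every $\delta > 0$, and letting $\delta \to 0$ finishes the proof. The main obstacle I anticipate is the middle step: obtaining decay of $\int_{B_\tau(p)} \xi_k^+$ that is uniform in $p \in \Gamma \cap K$ and in $\tau$ bounded away from $0$, so that the almost-monotonicity formula transforms cleanly into an upper density bound with constant independent of $\delta$. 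This uniformity depends crucially on the quantitative interior estimate of Proposition~\ref{discrepancy upper} together with the boundary bounds of Proposition~\ref{boundary derivatives}.
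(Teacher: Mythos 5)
Your argument is correct, and its first half coincides with the paper's: the pointwise bound $\xi_{k,+}\lesssim \varepsilon_k^{1-\sigma}\rho^{-2+\sigma}$ outside the tube $\{\rho\le R\varepsilon_k\}$ (from iterating Proposition~\ref{discrepancy upper} at the largest available scale) together with $\varepsilon_k|\nabla u_k|^2\lesssim\rho^{-1}$ from Proposition~\ref{boundary derivatives} inside the tube is exactly how Lemma~\ref{discrepancy integral upper} disposes of the positive part, and your bookkeeping of the integrable weight $\rho^{-2+\sigma}$ is if anything more careful than the write-up there. Where you genuinely diverge is the negative part. The paper (Lemma~\ref{discrepancy integral lower}) adapts the Hutchinson--Tonegawa measure-theoretic argument up to the boundary: it passes to weak* limits $\xi_-$ and $\mu$, invokes the \emph{lower} density bound of Lemma~\ref{local energy bounds} (which rests on Lemma~\ref{support on loops}), feeds the boundary and interior monotonicity formulas into a contradiction showing $\liminf_{r\to 0}\xi_-(B_r(p))/\mu(B_r(p))=0$ at every $p\in\supp\mu$, and concludes with a differentiation theorem. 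You never analyse $\xi_{k,-}$ near $\Gamma$ at all: you bound $|\xi_k|\le e_{\varepsilon_k}(u_k)$ there, derive the \emph{upper} density bound $E_{\varepsilon_k}(u_k,B_\delta(p))\le C\delta^{n-1}$ at boundary points for $k\ge k(\delta)$ from Proposition~\ref{energy monotonicity} plus the already-proved decay of $\xi_{k,+}$ (this is essentially Case~1 of the upper bound in Lemma~\ref{local energy bounds}, carried out at the level of $\mu_k$ rather than of the limit $\mu$), cover the $\delta$-tube by $O(\delta^{-(n-2)})$ balls to get tube energy $O(\delta)$, and quote the interior $L^1_{\loc}$ decay of \cite[Proposition~4.3]{HT} on $K\cap\{\rho\ge\delta\}$. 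The quantifier order is sound (fix $\delta$, take $k\ge k(\delta)$, then send $\delta\to0$), there is no circularity since Proposition~\ref{energy monotonicity} and the positive-part decay are independent of the statement being proved, and the dependence of your constant on $\dist(K,\partial B)$ through the anchoring radius is harmless for a local statement. The trade-off: your route avoids both the lower density bound and the differentiation-theorem step at boundary points, making the boundary half shorter, at the price of using the interior Hutchinson--Tonegawa result (including its negative-part decay) as a black box; the paper instead reproves that argument inline on all of $B$, which keeps the treatment self-contained and in any case sets up density machinery it needs later.
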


We need different arguments for the positive and negative parts of $\xi_k$. We first consider the positive part, $\xi_{k,+} := \max\{\xi_k, 0\}$. 

\begin{lemma}\label{discrepancy integral upper}
As $k \to \infty$ we have $\xi_{k,+} \to 0$ in $C^0_{\loc}(X)$ and $L^1_{\loc}(B)$.
\end{lemma}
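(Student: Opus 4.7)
The plan is to combine the interior decay of Proposition~\ref{discrepancy upper} with the near-boundary gradient bound of Proposition~\ref{boundary derivatives}. Fix $\delta \in (0,1)$ and let $k_0 = k_0(\delta)$ be as in Proposition~\ref{discrepancy upper}. For any $p \in X$, any ball $B_r(p) \subset X$ is simply connected, so $L$ trivialises over it; after choosing a unit section, $u_k$ becomes a scalar solution of $\varepsilon_k^2 \Delta u_k = W'(u_k)$, and the rescaled function $\tilde u_k(y) := u_k(p + \varepsilon_k y)$ solves $\Delta \tilde u_k = W'(\tilde u_k)$ on a Euclidean ball of radius $r/\varepsilon_k$. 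A direct computation gives $\xi_k(p) = \varepsilon_k^{-1}\tilde\xi_k(0)$, where $\tilde\xi_k$ denotes the discrepancy of $\tilde u_k$. Applying the ``moreover'' half of Proposition~\ref{discrepancy upper} with parameter $R = r/((k_0+3)\varepsilon_k)$ then yields
    \[\xi_k(p) \leq C\,\varepsilon_k^{1-\delta}\,r^{-(2-\delta)},\qquad C = C(n,C_0,\delta),\]
provided $r/\varepsilon_k$ exceeds a fixed threshold. The $C^0_\loc(X)$ conclusion follows at once: for a compact $K \subset X$ set $d := \min\{\dist(K,\Gamma),\dist(K,\partial B)\} > 0$ and apply this bound uniformly with $r = d/2$ to get $\sup_K \xi_{k,+} \leq C(d)\,\varepsilon_k^{1-\delta}\to 0$.

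For the $L^1_\loc(B)$ conclusion, fix a compact $K \subset B$, let $\delta_1 = \delta_1(n,\Gamma)$ be as in Proposition~\ref{boundary derivatives}, and let $R_0''$ denote the threshold from the pointwise estimate above. Choose a small fixed $\tau \in (0,\delta_1)$ so that $\{\rho \leq \tau\}\cap K$ lies in a fixed compactly supported tubular neighbourhood of $\Gamma$ inside $B$. Decompose $K$ as the disjoint union
    \[K_1 := K\cap\{\rho\geq\tau\},\quad K_2 := K\cap\{R_0''\varepsilon_k < \rho < \tau\},\quad K_3 := K\cap\{\rho \leq R_0''\varepsilon_k\}.\]
The integral over $K_1$ vanishes in the limit by the uniform $C^0$ estimate just obtained. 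For $k$ large the condition $\dist(\cdot,\partial B) \geq 2R_0''\varepsilon_k$ in the hypothesis of Proposition~\ref{boundary derivatives} is satisfied on $K$, so that proposition applied with $R = R_0''$ gives $\xi_{k,+} \leq \tfrac{1}{2}\varepsilon_k|\nabla u_k|^2 \leq C\rho^{-1}$ on $K_3$. In tubular coordinates about $\Gamma$ the Euclidean volume element is comparable to $r\,dr\,d\theta\,d\mathcal{H}^{n-2}(y)$, hence
    \[\int_{K_3}\xi_{k,+}\,dx \leq C\int_{K_3}\rho^{-1}\,dx \leq C' R_0''\varepsilon_k \to 0.\]
On $K_2$, the pointwise decay applied with $r = \rho(p)/2$ gives $\xi_{k,+}(p) \leq C\varepsilon_k^{1-\delta}\rho(p)^{-(2-\delta)}$, and integrating in the same coordinates,
    \[\int_{K_2}\xi_{k,+}\,dx \leq C\varepsilon_k^{1-\delta}\int_{R_0''\varepsilon_k}^\tau r^{-(1-\delta)}\,dr \leq C\varepsilon_k^{1-\delta}\tau^\delta \to 0.\]
Summing the three contributions gives the claim.

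The main obstacle is the boundary behaviour of $\xi_k$: as the example at the end of the introduction shows, $\xi_{k,+}$ cannot be bounded uniformly up to $\Gamma$ for any fixed $\varepsilon_k$. The pointwise interior estimate derived above degenerates like $\rho^{-(2-\delta)}$ as $\rho \to 0$ and is applicable only in the regime $\rho \gtrsim \varepsilon_k$; on the complementary thin tube $\rho \lesssim \varepsilon_k$ we must instead exploit the boundary derivative bound $|\nabla u_k|^2 \lesssim \varepsilon_k^{-1}\rho^{-1}$ of Proposition~\ref{boundary derivatives}, which caps $\xi_{k,+}$ at order $\rho^{-1}$ precisely where the interior argument breaks down. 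Both of these singularities are integrable thanks to the codimension-two volume factor $r\,dr$ in tubular coordinates around $\Gamma$, which is what ultimately makes the decomposition succeed.
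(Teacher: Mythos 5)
Your proof is correct, and it uses the same two ingredients as the paper (the iterated interior decay of Proposition~\ref{discrepancy upper} after blowing up at scale $\varepsilon_k$, and the near-boundary bound $|\nabla u_k|^2 \lesssim \varepsilon_k^{-1}\rho^{-1}$ of Proposition~\ref{boundary derivatives}), but the way you assemble them differs from the paper's write-up. The paper splits $K$ into only two regions, $\{\rho \leq R\varepsilon_k\}$ and $\{\rho > R\varepsilon_k\}$ with $R$ a \emph{fixed} constant, and asserts the uniform bound $\xi_{k,+} \leq C_2\varepsilon_k^{1-\delta}$ on the outer region; you instead record the position-dependent estimate $\xi_{k,+}(p) \leq C\varepsilon_k^{1-\delta}\rho(p)^{-(2-\delta)}$ (which is exactly what the rescaling argument yields, since applying Proposition~\ref{discrepancy upper} at scale $\varepsilon_k$ with parameter $R \sim \rho/\varepsilon_k$ and undoing the scaling $\xi_k = \varepsilon_k^{-1}\tilde\xi_k$ produces the factor $\rho^{-(2-\delta)}$) and integrate it over the intermediate annular region $\{R_0''\varepsilon_k < \rho < \tau\}$ against the tubular weight $\rho\,d\rho$. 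This three-region decomposition is arguably the more careful version: at distances $\rho$ comparable to $\varepsilon_k$ the rescaling with fixed $R$ only gives $\xi_{k,+} \lesssim \varepsilon_k^{-1}R_0^{-2+\delta}$, not $\varepsilon_k^{1-\delta}$, so the paper's uniform claim on all of $\{\rho > R\varepsilon_k\}$ is really shorthand for the argument you spell out, and your observation that both singular profiles $\rho^{-1}$ (inner tube) and $\rho^{-(2-\delta)}$ (intermediate region) are integrable against the codimension-two volume factor is precisely what closes the estimate. The only cosmetic point is that with $r = \rho(p)/2$ your threshold $R_0''$ should be taken as twice the threshold of the pointwise estimate, a trivial adjustment of constants.
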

\begin{proof}
Fix a constant $\delta \in (0,1)$. Suppose $x \in X$ is such that $\rho(x) > R \varepsilon_k$ and $\dist(x,\partial B) > R\varepsilon_k$, where 
    \[R := (1+ (k_0+2) R_0),\]
and $k_0=k_0(\delta)$ and $R_0=R_0(n,C_0)$ are the constants referred to in Proposition~\ref{discrepancy upper}. We then have
    \[B_{\varepsilon_k R}(x) \subset X.\]
After rescaling and applying Proposition~\ref{discrepancy upper}, we find that 
    \[\sup_{B_{\varepsilon_k }(x)} \xi_{k,+} \leq C_2(n,C_0,\delta) \varepsilon_k^{1-\delta}.\]
Since every compact subset of $X$ is in the set $\{\rho > \varepsilon_k R\} \cap \{\dist(\cdot, \partial B) > R\varepsilon_k\}$ for sufficiently $k$, we conclude that $\xi_{k,+} \to 0$ in $C^0_{\loc}(X)$.

In the set $\{\rho \leq \varepsilon_k R\}\cap\{\dist(\cdot, \partial B) > 2R\varepsilon_k\}$ we can apply the boundary derivative estimates established in Proposition~\ref{boundary derivatives}. These provide a constant $C$ which is independent of $k$ such that
    \[|\nabla u_k|^2 \leq C(\varepsilon_k^{-1} \rho^{-1} + \varepsilon_k^{-3}\rho),\]
and hence
    \[\xi_{k,+} \leq C( \rho^{-1} + \varepsilon_k^{-2}\rho)\]
in $\{\rho \leq \varepsilon_k R\}\cap\{\dist(\cdot, \partial B) > 2R\varepsilon_k\}$. By the coarea formula, for large $k$,
    \[\int_{\{\rho \leq \varepsilon_k R\} \cap \{\dist(\cdot, \partial B) > 2R\varepsilon_k\} } ( \rho^{-1} + \varepsilon_k^{-2}\rho) \leq C \int_0^{\varepsilon_k R}( 1 + \varepsilon_k^{-2}\rho^2) \,d\rho \leq C \varepsilon_k R^3,\]
where we have increased $C$ as necessary. For any compact set $K$ in $B$ we have
    \begin{align*}\int_K \xi_{k,+} &= \int_{K \cap \{\rho\leq \varepsilon_k R\} \cap \{\dist(\cdot, \partial B) > 2R\varepsilon_k\}} \xi_{k,+} + \int_{K \cap \{\rho > \varepsilon_k R\} \cap \{\dist(\cdot, \partial B) > 2R\varepsilon_k\}} \xi_{k,+} \\
    &\leq C \varepsilon_k  R^3 + C_2 \mathcal H^{n}(K) \varepsilon_k^{1-\delta}\end{align*}
for all large $k$. In particular, $\xi_{k,+} \to 0$ in $L^1_{\loc}(B)$ as $\varepsilon_k \to 0$. 
\end{proof}

We now turn to the negative part of $\xi_{k}$. That this quantity converges to 0 in $L^1_{\loc}(B)$ is established below in Lemma~\ref{discrepancy integral lower}. For the proof we need some notation and further lemmas. 

For our fixed sequence of critical sections $u_k$, let us define a sequence of energy measures $\mu_k$ by
    \[\mu_k = \bigg(\varepsilon_k \frac{|\nabla u_k|^2}{2} + \frac{W(u_k)}{\varepsilon_k}\bigg) \, dx.\]
Then $\mu_k(A) = E_{\varepsilon_k}(u_k, A)$ for each Borel set $A \subset B$. The energy bound $E_{\varepsilon_k}(u_k) \leq C_1$ implies that $\mu_k$ is a sequence of uniformly bounded Radon measures, so after passing to a subsequence, we can assume that $\mu_k$ weak*-converges to some limiting Radon measure $\mu$. That is, 
    \[\lim_{k\to\infty} \int \varphi \, d\mu_k = \int \varphi \, d\mu\]
for every $\varphi \in C_0(B)$.

Consider a loop $\gamma$ in $X$ which satisfies $\operatorname{link}_{\mathbb{Z}}(\gamma, \Gamma) = \pm 1$. Then every smooth section of $L$ must vanish at at least one point on $\gamma$. This has the following consequence for the limiting energy measure $\mu$. 

\begin{lemma}\label{support on loops} Suppose $\mu_k \to \mu$ in the weak* sense. Let $\gamma$ be a smooth loop in $X$ such that $\operatorname{link}_{\mathbb{Z}}(\gamma, \Gamma) = \pm 1$. There is then a point $p \in \gamma$ such that 
    \[\frac{1}{r^{n-1}}\mu(B_r(p)) \geq c\]
for all $r < \dist(\gamma, \Gamma \cup \partial B)/2$, where $c = c(n,C_0)$. In particular, $p \in \supp \mu$.
\end{lemma}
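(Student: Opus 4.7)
The plan is to exploit the topological hypothesis on $\gamma$ to force a zero of each $u_k$ on $\gamma$, use this to obtain an interior density lower bound at that zero, and then pass to the weak$^*$ limit.

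First I would observe that the pullback bundle $\gamma^*L$ is classified by the image of $[\gamma] \in H_1(X)$ under the $\mathbb{Z}_2$-homomorphism corresponding to $L$. Since $\operatorname{link}_{\mathbb{Z}}(\gamma,\Gamma) = \pm 1$, the discussion in Section~\ref{topology} identifies $[\gamma]$ with a generator on which this homomorphism is nontrivial, so $\gamma^*L$ is the M\"obius bundle over $S^1$ and admits no nowhere-vanishing section. Hence each $u_k$ has a zero at some $p_k \in \gamma$, and after extracting a subsequence I may assume $p_k \to p \in \gamma$.

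Set $R := \dist(\gamma,\Gamma \cup \partial B)/2$, so that for large $k$ the ball $B_R(p_k)$ is compactly contained in $X$. The standard interior gradient estimate for solutions of $\varepsilon_k^2 \Delta u_k = W'(u_k)$ with $|u_k| \leq C_0$ yields $\sup_{B_{R/2}(p_k)} |\nabla u_k| \leq C(n,C_0)\varepsilon_k^{-1}$. Combined with $u_k(p_k)=0$, this produces a constant $c_0 = c_0(n,C_0)$ such that $|u_k| \leq 1/2$ throughout $B_{c_0\varepsilon_k}(p_k)$, whence
\[
\mu_k(B_{c_0\varepsilon_k}(p_k)) \;\geq\; \int_{B_{c_0\varepsilon_k}(p_k)} \frac{W(u_k)}{\varepsilon_k}\,dx \;\geq\; c_1(n,C_0)\,\varepsilon_k^{n-1}.
\]
To propagate this base density to macroscopic scales, I would invoke the interior analogue of Proposition~\ref{energy monotonicity} (the Hutchinson--Tonegawa monotonicity, obtained by testing the stress-energy identity against $g(x) = x - p_k$; there is no boundary remainder because $B_R(p_k) \subset X$). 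The discrepancy error is controlled via Proposition~\ref{discrepancy upper}: after rescaling by $\varepsilon_k$, interior points of $B_{R/2}(p_k)$ satisfy $\xi_{k,+} \leq C\varepsilon_k^{1-\delta}$, and consequently $\int_{c_0\varepsilon_k}^{r} \tau^{-n}\int_{B_\tau(p_k)} \xi_k \, dx\, d\tau = o(1)$ as $k \to \infty$, uniformly in $r \leq R$. Combined with the base bound this gives, for large $k$ and all $r \in [c_0\varepsilon_k, R]$,
\[
r^{-(n-1)} \mu_k(B_r(p_k)) \;\geq\; 2c
\]
for some $c = c(n,C_0) > 0$.

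Finally, for any $r < R$ and $s < r' < r$, once $k$ is large enough that $|p_k - p| < r' - s$ we have $B_s(p_k) \subset \bar B_{r'}(p) \subset B_r(p)$, so $\mu_k(\bar B_{r'}(p)) \geq 2c s^{n-1}$. Upper semicontinuity of weak$^*$ convergence on compact sets gives $\mu(\bar B_{r'}(p)) \geq 2c s^{n-1}$; sending first $s \nearrow r'$ and then $r' \nearrow r$ yields $\mu(B_r(p)) \geq c r^{n-1}$. The hard step is the monotonicity propagation above, where the discrepancy error must be absorbed uniformly in $k$; this is precisely what the sharpened interior estimate of Proposition~\ref{discrepancy upper} was designed to achieve. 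The remaining pieces—the topological observation producing $p_k$ and the passage to the weak$^*$ limit—are routine.
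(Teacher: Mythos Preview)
Your argument is correct and follows essentially the same route as the paper: both produce a zero $p_k\in\gamma$ of $u_k$ from the nontriviality of $\gamma^*L$, obtain an energy lower bound of order $\varepsilon_k^{n-1}$ at scale $\varepsilon_k$ around $p_k$, propagate this to macroscopic scales via the interior Hutchinson--Tonegawa monotonicity (with the discrepancy error absorbed using the pointwise estimate of Proposition~\ref{discrepancy upper}, equivalently the $C^0_{\loc}$ convergence of Lemma~\ref{discrepancy integral upper}), and then pass to the weak$^*$ limit. The only cosmetic differences are that you spell out the gradient-estimate argument for the base-scale lower bound (the paper writes ``standard elliptic estimates'') and you handle the weak$^*$ passage via closed balls and monotone exhaustion rather than the simpler inclusion $B_{r/2}(p_k)\subset B_r(p)$ used in the paper.
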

\begin{proof}
Suppose $r < \dist(\gamma, \Gamma \cap \partial B)/2$, so that $\bar B_{r}(x)$ is contained in $X$ for every $x \in \gamma$. We know that $u_k$ vanishes at some point $p_k$ in $\gamma$ for every $k$. After passing to a subsequence, we may assume $p_k \to p \in \gamma$. We have 
    \[\frac{2^{n-1}}{r^{n-1}}\mu(B_r(p)) \geq \limsup_{k\to \infty}\frac{2^{n-1}}{r^{n-1}}\mu_k(B_{r/2}(p_k)),\]
so the interior energy monotonicity formula in \cite{HT} and Proposition~\ref{discrepancy integral upper} imply    
    \[\frac{2^{n-1}}{r^{n-1}}\mu(B_r(p)) \geq \liminf_{k\to \infty} \frac{1}{\varepsilon_k^{n-1}}E_{\varepsilon_k}(u_k, B_{\varepsilon_k}(p_k)).\]
After rescaling and applying standard elliptic estimates, we find that there is a positive constant $c$ depending only on $n$ and $C_0$ such that 
    \[\frac{1}{\varepsilon_k^{n-1}} E(u_k, B_{\varepsilon_k}(p_k)) \geq c\]
for all sufficiently large $k$. Consequently,     
    \[\frac{1}{r^{n-1}}\mu(B_r(p)) \geq \frac{c}{2^{n-1}}.\]
\end{proof}

We note that the conclusion of Lemma~\ref{support on loops}, together with the fact that $\supp \mu$ is closed, implies that $\Gamma \cap B \subset \supp \mu$. To see this, approximate any point in $\Gamma \cap B$ by a sequence of contracting loops and apply the lemma. 

Using Lemma~\ref{support on loops}, we can now prove density bounds for $\mu$, as in \cite[Proposition~4.1]{HT}. 

\begin{lemma}\label{local energy bounds}
Given $p \in \supp \mu$, there are positive constants $r_1 = r_1(n,\Gamma)$ and $D = D(n, \Gamma, C_0, C_1)$ such that 
    \begin{equation}\label{limit measure bounds}
        D^{-1} \leq \frac{1}{r^{n-1}}\mu(B_r(p)) \leq D\bigg(1 + \frac{1}{\dist(p,\partial B)^{n-1}}\bigg)
    \end{equation}
for all $r < \min\{r_1, \dist(p, \partial B)/2\}$. 
\end{lemma}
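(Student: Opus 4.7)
The plan is to derive both density bounds by combining the almost-monotonicity formula (Proposition~\ref{energy monotonicity} at boundary points and its interior analogue from \cite{HT} at interior points) with the discrepancy control from Lemma~\ref{discrepancy integral upper} and the concentration-on-loops mechanism from Lemma~\ref{support on loops}. The key observation is that, upon sending $k \to \infty$, the discrepancy error $\int \xi_k$ in the almost-monotonicity vanishes from above, because $\xi_k \le \xi_{k,+} \to 0$ in $L^1_{\loc}$; the almost-monotonicity therefore improves to a one-sided estimate good enough to be transferred to $\mu$.

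For the upper bound, set $\tilde r := \min\{r_0, \dist(p,\partial B)/2\}$ and take $r_1 := r_0/2$. For $s < r < \tilde r$, Proposition~\ref{energy monotonicity} (if $p \in \Gamma$) or its interior counterpart from \cite{HT} (if $p \in X$) yields
\[
\frac{e^{\Lambda s}}{s^{n-1}} E(u_k, B_s(p)) \le \frac{e^{\Lambda \tilde r}}{\tilde r^{n-1}} E(u_k, B_{\tilde r}(p)) + \int_s^{\tilde r} \frac{e^{\Lambda \tau}}{(1+n^{-1}\Lambda \tau)\tau^n} \int_{B_\tau(p)} \xi_k \, dx \, d\tau,
\]
after dropping the nonnegative normal-gradient term. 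The weight in the error integral is bounded on $[s,\tilde r]$ for fixed $s > 0$, so Lemma~\ref{discrepancy integral upper} together with $\xi_k \le \xi_{k,+}$ forces the error to $0$ as $k \to \infty$. The first term on the right is at most $C_1 e^{\Lambda r_0}/\tilde r^{n-1} \le C(1 + \dist(p,\partial B)^{-(n-1)})$. Using the weak-$*$ inequality $\mu(B_s(p)) \le \liminf_k \mu_k(B_s(p))$ for open balls then gives the desired upper bound for every $s < r_1$.

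For the lower bound we distinguish two cases. If $p \in \Gamma$, for each $r < r_1$ we take a circle $\gamma_r$ in the normal plane $N_p\Gamma$ centered at $p$ of radius $r/2$; after choosing $r_1$ smaller than the injectivity radius of $\Gamma$, this loop lies in $X \cap B_{r/2}(p)$, satisfies $\link_\Z(\gamma_r, \Gamma) = \pm 1$, and keeps distance at least $r/3$ from $\Gamma$. Lemma~\ref{support on loops} then delivers a point $p_r \in \gamma_r$ with $\mu(B_{r/4}(p_r)) \ge c(r/4)^{n-1}$, and since $B_{r/4}(p_r) \subset B_r(p)$ we obtain $\mu(B_r(p))/r^{n-1} \ge c'$ with $c'$ depending only on $n$ and $C_0$. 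If instead $p \in X \cap \supp \mu$, then for $r < \dist(p,\Gamma)$ the ball $B_r(p)$ lies in $X$ with $L|_{B_r(p)}$ trivial, and we reduce to the scalar setting handled in \cite[Proposition~4.1]{HT}: extract $p_k \to p$ with $|u_k(p_k)| \le 1/2$ (which must exist if $\mu(B_r(p)) > 0$), invoke the elliptic lower bound $E(u_k, B_{\varepsilon_k}(p_k)) \ge c\, \varepsilon_k^{n-1}$, and propagate via the interior monotonicity.

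The main obstacle is that, at this stage of the paper, the negative part $\xi_{k,-}$ of the discrepancy is not yet known to vanish in $L^1_{\loc}$---this is precisely the content of the still-unproved Lemma~\ref{discrepancy integral lower}. The delicate point is therefore to observe that the upper bound needs only one-sided control, which is provided by $\xi_k \le \xi_{k,+}$ together with Lemma~\ref{discrepancy integral upper}, so the sign of the error in the monotonicity works in our favour. A secondary subtlety is that the loop argument at a boundary point requires a uniform linking number and a uniform distance $\dist(\gamma_r,\Gamma) \gtrsim r$, which forces $r_1$ to be chosen relative to the geometry of $\Gamma$ and not merely in terms of $n$.
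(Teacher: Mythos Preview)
Your overall strategy matches the paper's: one-sided discrepancy control via $\xi_k \le \xi_{k,+}$ for the upper bound, and the loop mechanism of Lemma~\ref{support on loops} for the lower bound. Your remark that only Lemma~\ref{discrepancy integral upper} is needed at this stage (not the still-unproved Lemma~\ref{discrepancy integral lower}) is exactly right and is precisely how the paper proceeds.

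There is, however, a genuine gap in both bounds when $p \in X \cap \supp\mu$ lies close to $\Gamma$. For the upper bound you invoke the interior almost-monotonicity from \cite{HT} on the interval $s < \tilde r = \min\{r_0, \dist(p,\partial B)/2\}$, but that formula is only valid for balls $B_\tau(p) \subset X$, i.e.\ for $\tau < \dist(p,\Gamma)$. If $\dist(p,\Gamma) \ll r_1$ your argument only reaches scale $\dist(p,\Gamma)$, and comparing against $E(u_k, B_{\dist(p,\Gamma)}(p))/\dist(p,\Gamma)^{n-1}$ gives no uniform control. The same issue afflicts your lower bound: you explicitly restrict to $r < \dist(p,\Gamma)$, whereas the lemma demands the bound for all $r < \min\{r_1, \dist(p,\partial B)/2\}$.

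The paper closes this gap by a further case split at interior points. For the upper bound (Case~2 there): if some $q \in B_{d/2}(p) \cap \Gamma$ exists, one uses the already-established boundary estimate at $q$ together with $B_r(p) \subset B_d(q)$ and $\dist(q,\partial B) \ge \tfrac{3}{4}\dist(p,\partial B)$; if instead $B_{d/2}(p)\cap\Gamma = \emptyset$, the interior monotonicity legitimately runs up to scale $d/2$. The lower bound is handled analogously: if $B_{r/2}(p)$ meets $\Gamma$ at some $q$, then $B_{r/2}(q) \subset B_r(p)$ and the boundary lower bound at $q$ does the job; otherwise the interior argument applies. You should add this dichotomy to make the proof complete.
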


\begin{proof}
Let $D$ be a large positive constant, which may depend only on $n$, $\Gamma$, $C_0$ and $C_1$, and may increase as we proceed. We first prove the lower bound in \eqref{limit measure bounds}. We consider two cases. 

\emph{Case 1.} Suppose first that $p \in \Gamma$. Let $\gamma$ denote the set of points in the $2$-plane normal to $\Gamma$ at $p$ which lie at distance $r/2$ from $p$. There is a positive constant $r_1 = r_1(n,\Gamma)$ such that, for $0 < r < \min\{r_1, \dist(p, \partial B)\}$,
    \begin{itemize}
        \item $\gamma$ is smooth loop in $X$,
        \item $\operatorname{link}_{\mathbb Z}(\gamma, \Gamma) = 1$,
        \item and $B_{r/4}(x) \subset B_r(p)$ for each $x \in \gamma$. 
    \end{itemize}
By Lemma~\ref{support on loops}, there exists a point $q \in \gamma$ such that 
    \[\frac{4^{n-1}}{r^{n-1}}\mu(B_{r/4}(q)) \geq c(n,C_0).\]
Therefore,
    \begin{equation}\label{density lower bdy}
    \frac{1}{r^{n-1}}\mu(B_r(p)) \geq \frac{1}{r^{n-1}} \mu(B_{r/4}(q)) \geq \frac{c}{4^{n-1}}
    \end{equation}
for $0 < r < \min\{r_1, \dist(p, \partial B)\}$.

\emph{Case 2.} Next, suppose $p \in \supp \mu \setminus \Gamma$. Consider a constant $0 < r < \min\{r_1, \dist(p, \partial B)\}$. If $B_{r/2}(p)$ intersects $\Gamma$, then there is a point $q \in \Gamma$ such that $B_{r/2}(q) \subset B_{r}(p)$, and hence by \eqref{density lower bdy} we have
    \[\frac{1}{r^{n-1}}\mu(B_r(p)) \geq \frac{1}{r^{n-1}}\mu(B_{r/2}(q)) \geq  \frac{c}{8^{n-1}}.\]
If instead $B_{r/2}(p)$ is disjoint from $\Gamma$ then we argue differently. Since $p \in \supp \mu$, there is a sequence $p_k \to p$ such that $|u_k(p_k)| \geq 3/4$ for all sufficiently large $k$ (see e.g. the proof of \cite[Proposition~4.1]{HT}). Moreover, by the interior almost-monotonicity formula and Proposition~\ref{discrepancy integral upper}, we have 
    \[\frac{2^{n-1}}{r^{n-1}}\mu(B_{r/2}(p)) \geq \liminf_{k\to\infty} \frac{1}{\varepsilon_k^{n-1}} E_{\varepsilon_k}(u_k, B_{\varepsilon_k}(p_k)).\]
After rescaling by $\varepsilon_k$ and using standard elliptic estimates, we find that the right-hand side of the last inequality is bounded from below by a positive constant $c = c(n,C_0)$. Therefore, 
    \[\frac{1}{r^{n-1}}\mu(B_r(p)) \geq \frac{1}{r^{n-1}}\mu(B_{r/2}(p)) \geq \frac{c}{2^{n-1}}.\]

Combining the two cases, we obtain 
    \[\frac{1}{r^{n-1}}\mu(B_r(p)) \geq D^{-1} \]
for all $p \in \supp \mu$ and $0 < r < \min\{r_1, \dist(p, \partial B)\}$. 

We now turn to the upper bound in \eqref{limit measure bounds}. Once again, we consider two cases. 

\emph{Case 1.} Suppose first that $p \in \Gamma$. By Proposition~\ref{energy monotonicity}, provided $r_0$ is sufficiently small depending on $n$ and $\Gamma$, we have 
    \begin{align*}
        \frac{e^{\Lambda r}}{r^{n-1}} &E_{\varepsilon_k}(u_k, B_r(p)) - \frac{e^{\Lambda s}}{s^{n-1}} E_{\varepsilon_k}(u_k, B_s(p)) \\
        &\geq - \int_s^r \frac{e^{\Lambda \tau}}{(1+n^{-1}\Lambda\tau)} \frac{1}{\tau^n} \int_{B_\tau(p)} \bigg(\varepsilon_k \frac{|\nabla u_k|^2}{2} - \frac{W(u_k)}{\varepsilon} \bigg)_+\,dx d\tau
    \end{align*}
for all $s < r < \min\{r_0, \dist(p,\partial B)\} =: d$. The inner integrand on the right is $\xi_{k,+}$, which tends to 0 in $L^1_{\loc}(B)$ as $k \to \infty$ by Lemma~\ref{discrepancy integral upper}. Therefore, after passing to the limit, we find that 
    \[\frac{e^{\Lambda r}}{r^{n-1}} \mu(B_r(p)) \leq \frac{e^{\Lambda d}}{d^{n-1}} \mu(B_{d}(p)) \leq \frac{e^{\Lambda d}}{d^{n-1}} C_1\]
for $0 < r < d$. In particular, 
    \begin{equation}\label{density upper}
        \frac{1}{r^{n-1}}\mu(B_r(p)) \leq D\bigg(1+\frac{1}{\dist(p, \partial B)^{n-1}}\bigg)
    \end{equation}
holds for every $0 < r < d$. 

\emph{Case 2.} Suppose next that $p \in \supp \mu \setminus \Gamma$, and let $d = \min\{r_0, \dist(p,\partial B)/2\}$. If there is a point $q \in B_{d/2}(p) \cap \Gamma$ then for $0 < r < d/2$, using \eqref{density upper} we obtain
    \[\frac{1}{r^{n-1}} \mu(B_{r}(p)) \leq \frac{1}{r^{n-1}} \mu(B_d(q)) \leq D \bigg(1+\frac{1}{\dist(q, \partial B)^{n-1}}\bigg).\]
It is easy to check that $\dist(p, \partial B) \leq \tfrac{4}{3} \dist(q,\partial B)$, and hence
    \[\frac{1}{r^{n-1}} \mu(B_{r}(p)) \leq D \bigg(1+\frac{1}{\dist(p, \partial B)^{n-1}}\bigg)\]
for $0 < r < d/2$. If instead $B_{d/2}(p)$ is disjoint from $\Gamma$, using the interior almost-monotonicity formula and Proposition~\ref{discrepancy integral upper} we obtain
    \[\frac{1}{r^{n-1}}\mu(B_r(p)) \leq \frac{2^{n-1}}{d^{n-1}}\mu(B_{d/2}(p)) \leq D\bigg(1+\frac{1}{\dist(p, \partial B)^{n-1}}\bigg)\]
for $0 < r < d/2$. 

Combining the two cases yields the upper bound in \eqref{limit measure bounds}. Note that we can decrease $r_1$ if necessary to ensure $r_1 \leq r_0/2$.
\end{proof}

We may now conclude that the negative part of the discrepancy decays to $0$ in $L^1_{\loc}$. Combining this result with Lemma~\ref{discrepancy integral upper} gives Proposition~\ref{discrepancy integral}. 

\begin{lemma}\label{discrepancy integral lower}
The negative part of the discrepancy, $\xi_{k,-} := \max\{-\xi_k, 0\}$, decays to $0$ in $L^1_{\loc}(B)$ as $k \to \infty$.
\end{lemma}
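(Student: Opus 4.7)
The plan is to argue that $\xi_{k,-}\,dx$, regarded as a sequence of nonnegative Radon measures on $B$ (uniformly bounded in total variation by $C_1$, since $\xi_{k,-} \leq W(u_k)/\varepsilon_k \leq \mu_k$), weak*-converges to zero. Once this is established, upper semicontinuity of weak* convergence on compact sets immediately yields $\limsup_k \xi_{k,-}(K) \leq 0$ for every compact $K \subset B$, which is the desired $L^1_{\loc}(B)$ decay. So by Banach--Alaoglu, it suffices to show that any subsequential weak* limit $\nu$ of $(\xi_{k,-}\,dx)$ must be the zero measure.

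Suppose for contradiction that $\nu \neq 0$. Since $\xi_{k,-} \leq \mu_k$, the Portmanteau theorem gives $\nu \leq \mu$ as Radon measures; in particular, $\nu \ll \mu$. Write $d\nu = f\,d\mu$ with $f \in [0,1]$, and pick a point $q \in \supp \nu$ (necessarily in $\supp \mu$) at which $f(q) > 0$ and the $\mu$-density quotient $\mu(B_r(q))/r^{n-1}$ is bounded below by a positive constant for small $r$ (this is possible by Lemma~\ref{local energy bounds} combined with Lebesgue differentiation for $\nu$ w.r.t. $\mu$). Combining these two facts yields a constant $c>0$ and a radius $r_2>0$ such that $\nu(B_\tau(q)) \geq c\,\tau^{n-1}$ for all $0 < \tau < r_2$.

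Now apply the monotonicity formula at $q$: Proposition~\ref{energy monotonicity} if $q \in \Gamma$, or the interior formula of \cite{HT} if $q \in X$. In either case, after discarding the nonnegative boundary integral and splitting $\xi_k = \xi_{k,+} - \xi_{k,-}$, we can rearrange to obtain, for $s < r$ small,
\begin{align*}
\int_s^r \frac{e^{\Lambda \tau}}{(1+n^{-1}\Lambda\tau)\,\tau^n}\,\xi_{k,-}(B_\tau(q))\,d\tau
&\leq \frac{e^{\Lambda r}}{r^{n-1}}\mu_k(B_r(q)) - \frac{e^{\Lambda s}}{s^{n-1}}\mu_k(B_s(q))\\
&\quad + \int_s^r \frac{e^{\Lambda \tau}}{(1+n^{-1}\Lambda\tau)\,\tau^n}\,\xi_{k,+}(B_\tau(q))\,d\tau.
\end{align*}
For the $\xi_{k,+}$ term I would use Lemma~\ref{discrepancy integral upper} (so $\xi_{k,+}(B_\tau(q)) \to 0$ for every fixed $\tau$) together with the uniform bound $\xi_{k,+}(B_\tau(q)) \leq \mu_k(B_\tau(q)) \leq D\tau^{n-1}$ (itself a consequence of the almost-monotonicity formula and Lemma~\ref{discrepancy integral upper}, the argument being parallel to Lemma~\ref{local energy bounds}). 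Dominated convergence on the fixed interval $[s,r]$ then makes this term vanish as $k \to \infty$. Passing to the limit, via Fatou on the left and weak* convergence $\mu_k \to \mu$ on continuity scales of $\mu$, we conclude
\[
\int_s^r \frac{e^{\Lambda \tau}}{(1+n^{-1}\Lambda\tau)\,\tau^n}\,\nu(B_\tau(q))\,d\tau \leq \frac{e^{\Lambda r}}{r^{n-1}}\mu(B_r(q)) - \frac{e^{\Lambda s}}{s^{n-1}}\mu(B_s(q)).
\]
Using the lower bound $\nu(B_\tau(q)) \geq c\tau^{n-1}$ from Step~2, the left-hand side dominates $c'\log(r/s)$ (for small $s$), which diverges as $s \to 0$. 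The right-hand side, however, stays uniformly bounded by the two-sided density estimates of Lemma~\ref{local energy bounds}. This contradiction forces $\nu = 0$, completing the argument.

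The main technical obstacle I anticipate is handling the $\xi_{k,+}$ integral term in the limit when $q \in \Gamma$: away from $\Gamma$, $\xi_{k,+}$ vanishes uniformly on compacta, but at boundary points one only has the $L^1_{\loc}(B)$ statement, so the dominated-convergence step relies crucially on a uniform upper density bound $\mu_k(B_\tau(q)) \leq D\tau^{n-1}$ for balls centred on $\Gamma$, which in turn is obtained from Proposition~\ref{energy monotonicity} applied directly to $\mu_k$ (rather than to the limit $\mu$).
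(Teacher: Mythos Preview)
The proposal is correct and follows essentially the same route as the paper: pass to a subsequential weak*-limit $\nu$ of $\xi_{k,-}\,dx$, feed the almost-monotonicity formula with the $L^1_{\loc}$-decay of $\xi_{k,+}$ to show that $\nu(B_\tau(q))/\tau^{n-1}$ cannot stay bounded below at any $q\in\supp\mu$ (else the $\log$-divergent integral contradicts the energy bound), and conclude $\nu=0$ by differentiation of $\nu$ with respect to $\mu$. Your anticipated obstacle is not real: on the fixed interval $[s,r]$ one simply has $\xi_{k,+}(B_\tau(q)) \leq \xi_{k,+}(B_r(q)) \to 0$ with $\tau^{-n}$ bounded, so no uniform density bound on $\mu_k$ is needed---the paper just drops the $s$-term and bounds the right-hand side by $e^{\Lambda r} r^{1-n} C_1$.
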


\begin{proof}
The proof follows \cite[Proposition~4.3]{HT}. Let us define a Radon measure on $B$ by
    \[\xi_{k,-}(A) := \int_A \xi_{k,-}.\]
Suppose the claim is false, so that there is a Borel set $A \subset B$ for which $\xi_{k,-}(A)$ does not converge to 0. After passing to a subsequence, we may assume that $\liminf_{k\to\infty}\xi_{k,-}(A)>0$. We may also assume that $\xi_{k,-}$ and $\mu_k$ weak*-converge to Radon measures $\xi_-$ and $\mu$.

We first claim that, for each $p \in A \cap \supp \mu$, the quantity 
    \[\delta := \liminf_{r\to\infty} \frac{\xi_{-}(B_r(p))}{\mu(B_r(p))}\]
is zero. Suppose to the contrary that there is a $p \in A \cap \supp \mu$ for which $\delta > 0$. Then, for all sufficiently small values of $r$, we have
    \[\xi_-(B_r(p)) > \frac{\delta}{2} \mu(B_r(p)),\]
and hence
    \[\xi_{-} (B_r(p)) > \frac{\delta}{2 D} r^{n-1},\]
where $D$ is independent of $k$, by Lemma~\ref{local energy bounds}. We derive a contradiction using energy monotonicity. If $p \in \Gamma$ then we use the almost-monotonicity formula derived in Proposition~\ref{energy monotonicity}, whereas if $p \in X$ we use the corresponding interior formula from \cite{HT}. The argument is essentially the same in both cases, so we only describe the boundary case in detail. For all $k$ (and sufficiently small $s < r$) we have 
    \begin{align*}
        \frac{e^{\Lambda r}}{r^{n-1}} &E_{\varepsilon_k}(u_k, B_r(p)) \geq - \int_s^{r/2} \frac{e^{\Lambda \tau}}{(1+n^{-1}\Lambda\tau)} \frac{1}{\tau^n} \int_{B_\tau(p)} \xi_{k}\, d\tau
    \end{align*}
Sending $k$ to infinity, and using the fact that the positive part of the discrepancy goes to 0 in $L^1_{\loc}(B)$ (by Lemma~\ref{discrepancy integral upper}), we obtain 
    \[\frac{e^{\Lambda r}}{r^{n-1}}C_1 \geq \int_s^{r/2} \frac{e^{\Lambda \tau}}{(1+n^{-1}\Lambda \tau)}  \frac{\xi_{-}(B_\tau(p))}{\tau^n} \, d\tau \geq \frac{\delta}{2D} \int_s^{r/2} \frac{e^{\Lambda \tau}}{(1+n^{-1}\Lambda\tau)} \frac{1}{\tau} \, d\tau\]
for sufficiently small $s < r$. The right-hand side becomes unbounded as $s \to 0$, so this is a contradiction. That is, $\delta = 0$ for every $p \in A \cap \supp \mu$. 

By a standard result in measure theory (see eg. Lemma~1.2 on p. 47 of Evans--Gariepy), we conclude that
    \[\xi_-(A \cap \supp \mu) = 0.\]
But the inequality $\xi_{k,-}(A) \leq \mu_k(A)$ implies $\supp \xi_- \subset \supp \mu$, so in fact $\xi_-(A) = 0$, contrary to our initial assumption. 
\end{proof} 

%%%%%%%%%%%%%%%%%%%%%%%%%%%%%%%%
%%%%%%%%%%%%%%%%%%%%%%%%%%%%%%%%
%%%%%%%%%%%%%%%%%%%%%%%%%%%%%%%%
%%%%%%%%%%%%%%%%%%%%%%%%%%%%%%%%
%%%%%%%%%%%%%%%%%%%%%%%%%%%%%%%%
%%%%%%%%%%%%%%%%%%%%%%%%%%%%%%%%
%%%%%%%%%%%%%%%%%%%%%%%%%%%%%%%%

\section{Stationary varifolds from critical sections}\label{varifolds}

Given an open subset $U \subset \mathbb{R}^{n}$, we write $G(U)$ for the Grassmannian bundle of unoriented $(n-1)$-planes over $U$. Each point in $G(U)$ is of the form $(x,S)$, with $x \in U$ and $S \in T_x U$.

An $(n-1)$-varifold on $U$ is a nonnegative Radon measure on $G(U)$. A sequence of varifolds $V_k$ weak*-converges to a varifold $V$ if for every $\varphi \in C_0(G(U))$ we have 
    \[\int_{U} \varphi(x,S)\, dV_k(x,S) \to \int_U \varphi(x,S) \, dV(x,S).\]
Given a varifold $V$, we define its mass $\|V\|$ to be the Radon measure on $U$ such that 
    \[\int_U \varphi(x) \, d\|V\|(x) := \int_{G(U)} \varphi(x) \, dV(x,S)\]
for all $\varphi \in C_0(U)$. Given a vector field $g \in C^1_0(U, \mathbb{R}^n)$, the first variation of $V$ with respect to $g$ is 
    \[\delta V (g) = \int Dg(x) \cdot S \, dV(x,S),\]
where 
    \[Dg(x) \cdot S := \sum_{i=1}^{n-1}  D_{e_i} g(x) \cdot e_i\]
and $\{e_i\}_{i=1}^{n-1}$ is any orthonormal basis for $S$.

Given a smooth hypersurface $\Sigma$ in $U$, we may define a measure on $G(\mathbb{R}^n)$ by the requirement that
    \begin{equation} \label{smooth varifold} V_{\Sigma}(A) := \mathcal H^{n-1}(\{x \in \Sigma : (x, T_x\Sigma) \in A\})
    \end{equation}
for every open $A$. If $\mathcal H^{n-1}(\Sigma, K) < \infty$ for every compact $K \subset \mathbb{R}^n$, then $V_\Sigma$ is an $(n-1)$-varifold on $\mathbb{R}^n$. 

\subsection{The associated varifolds} We assume either that $M = \mathbb{R}^n$ or $M = B$, and write $X = M \setminus \Gamma$, where $\Gamma$ is a compact codimension-two submanifold of $\mathbb{R}^n$. Let $L \to X$ be some real line bundle, and suppose $u$ is a smooth section of $L$ with $E_\varepsilon(u) < \infty$. We then define $w : X \to \mathbb{R}$ by
    \[w := \Phi \circ |u|, \qquad \Phi(t) := \int_0^t \sqrt{\frac{(1-s^2)^2}{8}} \,ds.\]
Let $\sigma := 2 \Phi(1)$, and observe that $2\sigma$ is precisely the Allen--Cahn energy of the 1-dimensional heteroclinic solution $z \mapsto \tanh(\tfrac{z}{\varepsilon\sqrt{2}})$. Sard's theorem implies that $\Sigma_t := w^{-1}(t)$ is a smooth hypersurface in $X$ for almost every $t \in \mathbb{R}$. Moreover, by the coarea formula, 
    \[\int_{\mathbb{R}} \mathcal H^{n-1}(\Sigma_t) \, dt = \int_X |d w| \, dx = \int_{X} \sqrt{\frac{W(u)}{2}}|\nabla u| \leq \frac{1}{2} E_\varepsilon(u).\]
We conclude that $\Sigma_t$ is smooth and satisfies $\mathcal H^{n-1}(\Sigma_t) < \infty$ for almost every $t \in \mathbb{R}$. For each such $t$ we define a varifold $V_{\Sigma_t}$ on $M$ as in \eqref{smooth varifold}. We then define a measure on $G(M)$ by setting
    \[V(A) := \frac{1}{\sigma}\int_{\mathbb R} V_{\Sigma_t}(A) \, dt\]
for the open sets $A \subset G(M)$. Note that, by definition, $\|V\|(\Gamma \cap B) = 0$. Therefore, by the computation using the coarea formula above, we have 
    \[\|V\|(M) = \|V\|(X) = \frac{1}{\sigma}\int_{X} \sqrt{\frac{W(u)}{2}}|\nabla u| \leq \frac{1}{2\sigma} E_\varepsilon(u).\]
Consequently, $\|V\|(M) < \infty$, and so $V$ is a varifold on $M$. We refer to $V$ as the varifold associated with $u$. 

For a vector field $g \in C^1_0(M, \mathbb{R}^n)$ we have 
    \[\delta V (g) = \int_{G(M)} Dg(x) \cdot S \, dV(x,S) = \frac{1}{\sigma} \int_\mathbb{R} \int_{G(M)} Dg(x) \cdot S \, dV_{\Sigma_t}(x,S) dt.\]
For each $t$ such that $\Sigma_t$ is smooth with $\mathcal H^{n-1}(\Sigma_t) < \infty$ we have
    \[\int_{G(M)} Dg(x) \cdot S \, dV_{\Sigma_t}(x,S) = \int_{G(X)} Dg(x) \cdot S \, dV_{\Sigma_t}(x,S) = \int_{\Sigma_t} \Div_{\Sigma_t} g(x) \, d\mathcal H^{n-1}(x),\]
and for $x \in \Sigma_t$
   \[\Div_{\Sigma_t} g = \tr(P(w) Dg), \qquad  P(w) := I - \frac{1}{|dw|^2} (\grad w \otimes dw),\]
so we may write
    \[\delta V (g) = \frac{1}{\sigma}\int_\mathbb{R} \int_{\Sigma_t} \tr(P(w) Dg) \, d\mathcal H^{n-1} dt.\]
By the coarea formula, 
    \begin{align*}
        \int_\mathbb{R} \int_{\Sigma_t} \tr(P(w) Dg) \, d\mathcal H^{n-1} dt = \int_{X} \tr(P(w) Dg) |d w| \, dx,
    \end{align*}
and hence
    \[\delta V (g) = \frac{1}{\sigma}\int_{X} \tr(P(w) Dg) |d w|.\]
At points where $|dw|$ is nonzero we have 
    \[\tr(P(w)Dg) = \Div(g) - \frac{1}{|\nabla u|^2} \langle \nabla_{Dg} u, \nabla u\rangle,\]
so we may also write
    \begin{equation}\label{first variation V}
        \delta V (g) = \frac{1}{\sigma}\int_{X} \bigg(\Div(g) - \frac{1}{|\nabla u|^2} \langle \nabla_{Dg} u, \nabla u\rangle\bigg)|d w|.
    \end{equation}

\subsection{Convergence to a stationary varifold}\label{subsec convergence to varifold} Suppose now that we are in the setting of \eqref{local}, and let $u_k$ be a sequence of critical sections as in \eqref{sequence of sections}. Then, possibly after passing to a subsequence, we may assume that the energy measures $\mu_k$ weak*-converge to a Radon measure $\mu$ on $B$. Moreover, since $\|V_k\| \leq \frac{1}{2\sigma}\mu_k$, we may assume that the associated varifolds $V_k$ weak*-converge to a varifold $V$ on $\mathbb{R}^n$. 

In the remainder of this section we study the limiting varifold $V$. In particular, after establishing some lemmas, we prove Theorem~\ref{main critical}.

\begin{lemma}\label{limit monotonicity}
There are constants $\Lambda=\Lambda(n,\Gamma)$ and $r_0 = r_0(n,\Gamma)$ such that for every $p \in \Gamma$ we have
    \begin{equation}\label{limit monotonicity ineq}
        \frac{e^{\Lambda s}}{s^{n-1}}\|V\|(B_s(p)) \leq \frac{e^{\Lambda r}}{r^{n-1}}\|V\|(B_r(p)),
    \end{equation}
for every $0 < s < r <\min\{r_0, \dist(p,\partial B)\}$. The same holds for $p \in X$, for every $0 < s < r < \min\{\dist(p, \Gamma), \dist(p, B)\}$, with $\Lambda = 0$. 
\end{lemma}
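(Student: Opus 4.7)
The plan is to take the weak* limit of the almost-monotonicity formula from Proposition~\ref{energy monotonicity}, using Proposition~\ref{discrepancy integral} to eliminate the error term arising from the discrepancy. As a preliminary step I would establish the identification $\mu = 2\sigma \|V\|$, which follows at once from $\|\xi_k\|_{L^1_{\loc}(B)} \to 0$. Indeed, writing $a_k := \tfrac{\varepsilon_k}{2}|\nabla u_k|^2$ and $b_k := \varepsilon_k^{-1}W(u_k)$, the density of $\mu_k - 2\sigma\|V_k\|$ equals $(\sqrt{a_k}-\sqrt{b_k})^2$, which is bounded by $|a_k - b_k| = |\xi_k|$. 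Hence $0 \leq \mu_k(U) - 2\sigma\|V_k\|(U) \leq \|\xi_k\|_{L^1(U)}$ for every precompact open $U\subset B$, and passing to the weak* limit yields $\mu = 2\sigma\|V\|$.

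Next, fix $p \in \Gamma$ and let $\Lambda, r_0$ be as in Proposition~\ref{energy monotonicity}. Dropping the nonnegative normal-derivative term in that proposition's inequality gives
\begin{equation*}
\frac{e^{\Lambda r}}{r^{n-1}} \mu_k(B_r(p)) - \frac{e^{\Lambda s}}{s^{n-1}} \mu_k(B_s(p)) \geq -\int_s^r \frac{e^{\Lambda \tau}}{(1+n^{-1}\Lambda\tau)\tau^n}\int_{B_\tau(p)} \xi_k\,dx\,d\tau
\end{equation*}
for all $0 < s < r < \min\{r_0, \dist(p,\partial B)\}$. For fixed $0 < s < r$ in this range, the absolute value of the right-hand side is bounded by $\|\xi_k\|_{L^1(B_r(p))}\int_s^r \tau^{-n}\,d\tau$, which vanishes as $k \to \infty$ by Proposition~\ref{discrepancy integral}. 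Choosing $s$ and $r$ to lie in the conull set of radii for which $\mu(\partial B_\bullet(p)) = 0$, weak* convergence of $\mu_k$ gives $\mu_k(B_s(p)) \to \mu(B_s(p))$ and similarly at $r$. Passing to the limit and dividing by $2\sigma$ using the first step yields the desired inequality for such radii.

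Finally, to extend to arbitrary admissible $0 < s < r$, I would approximate by continuity points from below, using monotonicity of $r \mapsto \|V\|(B_r(p))$ and left-continuity, which holds for any Radon measure evaluated on increasing families of open balls. The interior statement for $p \in X$ is proven in exactly the same way, substituting the interior almost-monotonicity formula of \cite{HT} for Proposition~\ref{energy monotonicity}; the absence of any lower-order curvature correction in that case means there is no exponential factor, and one obtains $\Lambda = 0$. The substantive analytic work has already been carried out in Proposition~\ref{discrepancy integral}, so the main challenge here is bookkeeping: one must be careful to take the weak* limit only on balls whose boundaries are $\mu$-null before extending by monotonicity, and to check that the $\tau^{-n}$ singularity in the error integral is integrable on the fixed interval $[s,r]$ with $s > 0$.
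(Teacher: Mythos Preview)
Your proposal is correct and follows essentially the same approach as the paper: combine Proposition~\ref{energy monotonicity} with Proposition~\ref{discrepancy integral} and pass to the limit (the interior case via the formula from \cite{HT}). The paper's proof is only two sentences and leaves implicit exactly the bookkeeping you spell out---the identification $\mu = 2\sigma\|V\|$ (which the paper in fact postpones to Step~1 of the proof of Theorem~\ref{main critical}), the restriction to $\mu$-continuity radii before taking the weak* limit, and the extension to all radii by left-continuity of $r\mapsto \|V\|(B_r(p))$; your added care on these points is appropriate and does not depart from the intended argument.
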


\begin{proof}
Consider some fixed $p \in \Gamma$. Let $r_0$ be the constant appearing in Proposition~\ref{energy monotonicity}. Combining that proposition with Proposition~\ref{discrepancy integral} yields \eqref{limit monotonicity ineq}.

If $p \in X$ we apply the same argument, but using the almost-monotonicity formula for interior points proven in \cite{HT}.
\end{proof}

Next we observe that Lemma~\ref{local energy bounds} implies upper and lower bounds for the rescaled energy at points in its support.  

\begin{lemma}\label{limit density}
There are constants $r_1$ and $D$ depending only on $n$, $\Gamma$, $C_0$ and $C_1$ such that 
    \begin{equation}
        D^{-1} \leq \frac{1}{r^{n-1}}\|V\|(B_r(p)) \leq D\bigg(1+\frac{1}{\dist(p,\partial B)^{n-1}}\bigg)
    \end{equation}
for every $p \in \supp \mu$ and $0 < r < \min\{r_1, \dist(p, \partial B)/2\}$. In particular, $\|V\|(\Gamma \cap B) = 0$. 
\end{lemma}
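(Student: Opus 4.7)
The plan is to show that, in the limit, the associated varifold mass $\|V\|$ and the energy measure $\mu$ differ by a constant factor: $\mu = 2\sigma \|V\|$. Once this identification is in hand, the bounds on $\|V\|(B_r(p))$ follow immediately from the corresponding bounds on $\mu(B_r(p))$ already established in Lemma~\ref{local energy bounds}.

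To prove the identification, I would use the pointwise algebraic identity: writing $a_k := \sqrt{\varepsilon_k|\nabla u_k|^2/2}$ and $b_k := \sqrt{W(u_k)/\varepsilon_k}$, we have
\begin{equation*}
\mu_k - 2\sigma \|V_k\| = (a_k^2 + b_k^2 - 2 a_k b_k)\,dx = (a_k - b_k)^2\,dx,
\end{equation*}
while the discrepancy is $\xi_k = a_k^2 - b_k^2 = (a_k - b_k)(a_k + b_k)$. Since $a_k, b_k \geq 0$ we have $|a_k - b_k| \leq a_k + b_k$, hence the pointwise bound $(a_k - b_k)^2 \leq |\xi_k|$. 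Testing against any $\varphi \in C_0(B)$ and using Proposition~\ref{discrepancy integral},
\begin{equation*}
\left| \int \varphi\, d\mu_k - 2\sigma \int \varphi\, d\|V_k\| \right| \leq \|\varphi\|_\infty \int_{\supp \varphi} |\xi_k|\,dx \to 0.
\end{equation*}
Passing to the weak$^*$ limit on each side then gives $\mu = 2\sigma \|V\|$ as Radon measures on $B$.

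With this identification, the bounds claimed for $\|V\|(B_r(p))/r^{n-1}$ are exactly Lemma~\ref{local energy bounds} applied to $\mu$, up to the harmless constant $2\sigma$, provided we know $p \in \supp \mu$ whenever $p \in \supp \mu$ (trivially). Actually we want them for $p$ in the statement's intended range; since $\|V\| = \mu/2\sigma$ their supports agree, so the same $p$ work. Finally, to deduce $\|V\|(\Gamma \cap B) = 0$ I would run a standard covering argument: fix any compact $K \subset B$ and cover $\Gamma \cap K$ by $N_r \lesssim r^{-(n-2)}$ balls of radius $r$ centered on $\Gamma$, all contained in some slightly larger compact set where $\dist(\cdot, \partial B)$ is bounded below. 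The upper bound just derived gives $\|V\|(\Gamma \cap K) \leq N_r \cdot D' r^{n-1} \lesssim r \to 0$ as $r \to 0$.

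The main obstacle is really confined to the discrepancy identification: everything downstream is routine once $\mu = 2\sigma\|V\|$ is in hand, and that identification uses only the $L^1_{\loc}$-decay of $\xi_k$ established in Proposition~\ref{discrepancy integral}. The covering step for the $\|V\|(\Gamma\cap B) = 0$ consequence is where the codimension-two hypothesis on $\Gamma$ enters, combining with the $(n-1)$-dimensional density bound to win an extra factor of $r$.
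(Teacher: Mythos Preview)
Your proposal is correct and follows essentially the same route as the paper. The paper's own proof of this lemma is a one-liner invoking Lemma~\ref{local energy bounds}, which only makes sense once the identification $\mu = 2\sigma\|V\|$ is in hand; the paper establishes that identification separately (in Step~1 of the proof of Theorem~\ref{main critical}) via exactly the algebraic inequality $(a_k-b_k)^2 \leq |a_k^2-b_k^2| = |\xi_k|$ that you wrote down, so you have simply folded that step into the present proof and made the argument self-contained. Your covering argument for $\|V\|(\Gamma\cap B)=0$ is likewise what the paper has in mind when it says ``since $\mathcal H^{n-1}(\Gamma)=0$.''
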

\begin{proof}
The upper and lower bounds for the rescaled mass follow immediately from Lemma~\ref{local energy bounds}. That $\|V\|(\Gamma \cap B) = 0$ follows from a simple covering argument, since $\mathcal H^{n-1}(\Gamma) = 0$. 
\end{proof}

We can now proceed with proving Theorem~\ref{main critical}. 

\begin{proof}[Proof of Theorem~\ref{main critical}]
\emph{Step 1: Equidistribution of energy.} The function $w_k$ is locally Lipschitz, hence differentiable almost everywhere in $X$, and satisfies 
    \[|dw_k| = \sqrt{\frac{W(u_k)}{2}} |\nabla u_k|\]
almost everywhere in $X$. Consequently,
    \begin{align*}
    \bigg|\varepsilon_k \frac{|\nabla u_k|^2}{2} + \frac{W(u_k)}{\varepsilon_k} - 2|d w_k|\bigg| &= \Bigg( \sqrt{\frac{\varepsilon_k}{2}} |\nabla u_k|  - \sqrt{\frac{W(u_k)}{\varepsilon_k}}\Bigg)^2 \leq |\xi_k|.
    \end{align*}
The right-hand side converges to 0 in $L^1_{\loc}(B)$ by Proposition~\ref{discrepancy integral}. It follows that
\begin{equation}\label{nabla u nabla w}
    \bigg|\varepsilon_k \frac{|\nabla u_k|^2}{2} - |d w_k|\bigg| \to 0, \qquad \bigg|\frac{W(u_k)}{\varepsilon_k} - |d w_k|\bigg| \to 0
    \end{equation}
in $L^1_{\loc}(B)$. Furthermore, since for every open $A \subset B$ we have
    \[\|V_k\|(A) = \frac{1}{\sigma}\int_{A\setminus\Gamma} \sqrt{\frac{W(u_k)}{2}} |\nabla u_k| = \frac{1}{2\sigma} \mu_k(A) -  \frac{1}{2\sigma} \int_{A} \Bigg( \sqrt{\frac{\varepsilon_k}{2}} |\nabla u_k|  - \sqrt{\frac{W(u_k)}{\varepsilon_k}}\Bigg)^2,\]
we find that $\|V\|$ and $\mu$ are related by
    \[\|V\| = \frac{1}{2\sigma}\mu.\]
Combining this identity with Lemma~\ref{support on loops}, we obtain 
    \[\Gamma \subset \supp\mu = \supp \|V\|.\]

\emph{Step 2: Density bounds.} Let us define 
    \[\Sigma := \supp \|V\|.\]
By Step~1, $\Sigma = \supp \mu$. In light of Lemma~\ref{limit monotonicity}, for each $p \in \Sigma$ the density 
    \[\Theta^{n-1}(\|V\|, p) := \lim_{r \to 0} \frac{\|V\|(B_r(p))}{ \omega_{n-1} r^{n-1}}, \qquad \omega_{n-1} := |B^{n-1}_1(0)|,\]
exists and is finite. Lemma~\ref{limit density} implies that
    \begin{equation} \label{density} D^{-1} \leq \Theta^{n-1}(\|V\|, p) \leq D\bigg(1+\frac{1}{\dist(p,\partial B)^{n-1}}\bigg)\end{equation}
for $p \in \Sigma$, where the constant $D >0$ depends only on $n$, $\Gamma$, $C_0$ and $C_1$. 

\emph{Step 3: $V$ is stationary.} Let $g \in C^1_0(B, \mathbb{R}^n)$ be such that $g|_{\Gamma}$ is tangent to $\Gamma$. We claim that $\delta V(g) = 0$. Since $\delta V_k(g) \to \delta V(g)$, it suffices to show that $\delta V_k(g) \to 0$. We first recall the identity \eqref{first variation V},
    \[\delta V_k (g) = \frac{1}{\sigma}\int_{X} \bigg(\Div(g) - \frac{1}{|\nabla u_k|^2} \langle \nabla_{Dg} u_k, \nabla u_k\rangle\bigg)|d w_k|.\]
By Lemma~\ref{first variation}, for all $k$ we have 
    \begin{align*}
        \int_X \bigg(\Div(g) - \frac{1}{|\nabla u_k|^2} \langle \nabla_{Dg} u_k, \nabla u_k\rangle\bigg) \varepsilon_k|\nabla u_k|^2 = \int_X \xi_k \Div g,
    \end{align*}
and hence 
    \begin{align*}
        \delta V_k(g) &= \frac{1}{2\sigma} \int_X \xi_k \Div g + \frac{1}{\sigma}\int_X \bigg(\Div(g) - \frac{1}{|\nabla u_k|^2} \langle \nabla_{Dg} u_k, \nabla u_k\rangle\bigg) \bigg(|d w_k| - \frac{\varepsilon_k}{2}|\nabla u_k|^2\bigg).
    \end{align*}
The right-hand side tends to 0 as $k\to\infty$ by \eqref{nabla u nabla w} and Proposition~\ref{discrepancy integral}, so $\delta V(g) = 0$. 

\emph{Step 4: $V$ is integer rectifiable.} We know that $V$ is stationary with respect to vector fields that are compactly supported in $X$. Moreover, $\Theta^{n-1}(\|V\|,p)$ is uniformly positive for $p \in \Sigma$ by Step~2. Therefore, we may apply Allard's rectifiability theorem \cite[5.5 (1)]{Allard} to conclude that $V \mres G(X)$ is a rectifiable $(n-1)$-varifold on $X$. Since $\|V\|(B\cap\Gamma) = 0$, the density upper bound in \eqref{density} ensures that $V$ is a rectifiable varifold on $B$. Finally, $V$ has integer multiplicity $\mathcal H^{n-1}$-almost everywhere in $X$, and hence $\mathcal H^{n-1}$-almost everywhere in $B$, by \cite[Section 5]{HT}. 

\emph{Step 5: Hausdorff convergence of level-sets.} Let us define $\Sigma := \supp \|V\|$. Theorem~1 in \cite{HT} implies that $|u_k| \to 1$ in $C^0_{\loc}(B \setminus \Sigma)$. 

Fix some $b\in(0,1)$. We prove that the closure of 
    \[\{x \in X : |u_k(x)| \leq 1-b\}\]
in $B$, which (by Proposition~\ref{boundary derivatives}) is precisely
    \[\{x \in X : |u_k(x)| \leq 1-b\} \cup \Gamma,\]
converges to $\Sigma$ in the local Hausdorff sense. Let us write 
    \[S_{k,b} := \{x \in X : |u_k(x)| \leq 1-b\} \cup \Gamma\]
and, for each $r > 0$, define 
    \[\Sigma^r := \{x \in B: \dist(x,\Sigma) < r\}, \qquad S^r_{k,b} := \{x \in B : \dist(x,S_{k,b}) < r\}.\]
The claim is then that, for every compact $K$ and positive constant $r$, when $k$ is sufficiently large we have 
    \[\Sigma \cap K \subset S_{k,b}^r \cap K, \qquad \text{and} \qquad S_{k,b} \cap K \subset \Sigma^r \cap K.\]

We first show that, for each $r > 0$, $\Sigma \subset S^r_{k,b}$ for all large $k$. If not let $p \in \Sigma$ be such that 
    \[\limsup_{k \to \infty} \dist(p, S_{k,b}) \geq r.\]
We then have $\dist(p, \Gamma) \geq r$ by Lemma~\ref{support on loops}. Using \cite[Proposition~4.2]{HT} we conclude that
    \[E(u_k, B_s(p)) \to 0\]
for every $s < r$, but this contradicts $p \in \Sigma$.

Next we show that, for each $r > 0$, 
    \[S_{k,b} \cap K \subset \Sigma^r \cap K\]
for all large $k$. If not, let $p_k \in S_{k,b} \cap K$ be a sequence of points such that 
    \[\limsup_{k \to \infty} \dist(p_k, \Sigma) \geq r.\]
Since $\Gamma \subset \Sigma$ it follows that $\dist(p_k, \Gamma) \geq r$. But then we may apply \cite[Proposition~4.2]{HT} to conclude that every accumulation point of $p_k$ is in $\Sigma$. This is a contradiction.
\end{proof}

As we mentioned in the introduction, Theorem~\ref{main critical} (and the interior theory of \cite{HT}) implies a corresponding global statement, for critical sections of the spanning bundle over $\mathbb{R}^n \setminus \Gamma$; see Theorem~\ref{main critical global}. In that theorem, $\supp\|V\|$ is claimed to be compact. This follows easily from Lemma~\ref{limit monotonicity} and Lemma~\ref{limit density}, since $\|V\|$ is bounded.

\subsection{Tangent cones at the boundary}
In the following sections we will address questions concerning the boundary regularity of the limiting varifold which arises in Theorem~\ref{main critical}. Much of our analysis will involve studying rescalings of this varifold at boundary points. Let us conclude this section by laying down some notation and basic results concerning such rescalings. 

Let $V$ be a varifold on an open subset of $\mathbb{R}^n$ and consider a point $p \in \supp\|V\|$. Let $D_{s_i, p}(x):=s_i(x-p)$, where $s_i$ is a sequence of scales $s_i \to \infty$. We refer to any subsequential weak*-limit of the sequence $(D_{s_i, p})_{\#}(V)$ as a varifold tangent to $V$ at $p$. 

\begin{lemma}\label{tangent cone}
In the setting of \eqref{local}, let $u_k$ be a sequence of critical sections as in \eqref{sequence of sections}. Suppose the varifolds $V_k$ associated with $u_k$ weak*-converge to $V$. For every point $p \in \supp \|V\|$, the limit $V$ admits a varifold tangent $C_pV$ at $p$, and the projection of $x$ onto $S^\perp$ vanishes for $C_pV$-almost every $(x, S) \in G(B)$.
\end{lemma}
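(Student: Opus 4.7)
The plan is to first extract a tangent varifold via compactness and mass bounds, then use the monotonicity formula to derive the constant density ratio at $0$, and finally obtain the cone property from a sharp varifold monotonicity identity.

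For existence, I would set $V_i := (D_{s_i, p})_{\#} V$, noting that $\|V_i\|(B_R(0)) = s_i^{n-1}\|V\|(B_{R/s_i}(p))$. For $s_i$ large enough that $R/s_i < \min\{r_1, \dist(p,\partial B)/2\}$, Lemma~\ref{limit density} bounds this uniformly in $i$, and Banach--Alaoglu produces a subsequence along which $V_i \rightharpoonup^* C_pV$ on $\mathbb{R}^n$. For constant density, Lemma~\ref{limit monotonicity} implies that $\sigma \mapsto e^{\Lambda \sigma}\sigma^{-(n-1)}\|V\|(B_\sigma(p))$ is monotone increasing in $\sigma$, so its limit as $\sigma \to 0^+$ exists and equals $\omega_{n-1}\Theta^{n-1}(\|V\|, p)$. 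Hence
\[
\frac{\|V_i\|(B_r(0))}{r^{n-1}} = \frac{\|V\|(B_{r/s_i}(p))}{(r/s_i)^{n-1}} \longrightarrow \omega_{n-1}\Theta^{n-1}(\|V\|, p)
\]
for every fixed $r > 0$. Weak* convergence then gives $r^{-(n-1)}\|C_pV\|(B_r(0)) = \omega_{n-1}\Theta^{n-1}(\|V\|, p)$ first on the cocountable set of $r$ where $\|C_pV\|(\partial B_r(0)) = 0$, and then for every $r > 0$ by monotonicity.

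For the cone property, I would retain the positive boundary integral $\varepsilon\int_{\partial B_\tau}|\nabla_\nu u|^2$ in Proposition~\ref{energy monotonicity} (it was dropped when passing to Lemma~\ref{limit monotonicity}) and identify its $\varepsilon \to 0$ limit with the varifold integrand $|(x-p)^\perp|^2$ via a boundary-adapted version of the Hutchinson--Tonegawa theory. Combined with Proposition~\ref{discrepancy integral}, this yields the sharp monotonicity identity for $V$:
\[
\frac{e^{\Lambda r}}{r^{n-1}}\|V\|(B_r(p)) - \frac{e^{\Lambda s}}{s^{n-1}}\|V\|(B_s(p)) \geq \int_s^r \frac{e^{\Lambda \tau}}{(1+n^{-1}\Lambda \tau)\tau^{n+1}}\int_{B_\tau(p)}|(x-p)^\perp|^2 \, d\|V\|(x) \, d\tau.
\]
Rescaling this around $p$ by $s_i$ and sending $s_i \to \infty$ (so that $e^{\Lambda\tau/s_i} \to 1$ and $(1+n^{-1}\Lambda\tau/s_i)^{-1} \to 1$), the corresponding inequality for $C_pV$ at $0$ reads
\[
\frac{\|C_pV\|(B_r(0))}{r^{n-1}} - \frac{\|C_pV\|(B_s(0))}{s^{n-1}} \geq \int_s^r \frac{1}{\tau^{n+1}}\int_{B_\tau(0)}|x^\perp|^2 \, d\|C_pV\|(x) \, d\tau.
\]
The left-hand side vanishes by the constant density established in the previous step, forcing $|x^\perp| = 0$ for $\|C_pV\|$-a.e.\ $x$, which is the cone property asserted in the lemma.

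The main obstacle is the identification of $\lim_{k \to \infty}\varepsilon_k\int_{\partial B_\tau}|\nabla_\nu u_k|^2$ with the varifold $|x^\perp|^2$-integral against $\|V\|$. This step requires extending the standard Hutchinson--Tonegawa identification of Allen--Cahn energy monotonicity with varifold monotonicity up to $\Gamma$, and relies on the boundary derivative estimates (Proposition~\ref{boundary derivatives}) together with the $L^1_{\loc}$-decay of the discrepancy (Proposition~\ref{discrepancy integral}) to rule out anomalous contributions from a neighbourhood of $\Gamma$.
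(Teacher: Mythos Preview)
Your extraction of a tangent varifold via mass bounds and the constant-density computation are fine and match the paper. The cone-property argument, however, is genuinely different from the paper's, and has a real gap.

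The paper never tries to identify the limit of the radial Allen--Cahn term $\varepsilon_k\int_{\partial B_\tau}|\nabla_\nu u_k|^2$. Instead, it works entirely on the varifold side: the tangent $C_pV$ is stationary with respect to vector fields supported away from $T_p\Gamma$ (this is immediate from the stationarity of $V$ away from $\Gamma$), and then Allard's reflection principle \cite[3.2]{Allard_boundary} makes $C_pV + \theta_\#(C_pV)$ stationary in all of $\mathbb{R}^n$. The classical monotonicity formula for stationary varifolds then gives the cone property directly from the constant density ratio. This is a two-line argument once the black boxes are in place.

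Your route instead tries to pass the sharp Allen--Cahn monotonicity to the limit. Two issues: first, the displayed identity you claim for $V$ (and for $C_pV$) has the wrong structure. The radial term in Proposition~\ref{energy monotonicity} is a single annular integral after coarea, namely $\int_{B_r\setminus B_s} g(|x-p|)\,\varepsilon_k|\nabla_\nu u_k|^2\,dx$ with $g(\tau)=e^{\Lambda\tau}[(1+n^{-1}\Lambda\tau)\tau^{n-1}]^{-1}$; rewriting $|\nabla_\nu u_k|^2=|\nabla u_k|^2|(x-p)^\perp|^2/|x-p|^2$ and passing to the limit yields $\int_{G(B_r\setminus B_s)} g(|x-p|)|x-p|^{-2}|(x-p)^\perp|^2\,dV$, not the double integral $\int_s^r\tau^{-(n+1)}\int_{B_\tau}|(x-p)^\perp|^2\,d\|V\|\,d\tau$ you wrote. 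Second, and more substantively, you correctly flag that the identification step is the main obstacle but do not carry it out. The test function $(x,S)\mapsto g(|x-p|)|x-p|^{-2}|(x-p)^{\perp_S}|^2$ is singular at $x=p$, so passing the varifold convergence through it near a boundary point $p\in\Gamma$ requires an additional uniform-integrability argument; this is precisely the complication the reflection trick sidesteps.

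In short, your strategy can be made to work with the correct annular formula and a careful Fatou-type argument for the singular integrand, but it is strictly more laborious than the paper's approach, which offloads the cone property to the classical varifold monotonicity via reflection.
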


\begin{proof}
If $p$ is in $\supp\|V\| \setminus \Gamma$ the claim is an easy consequence of the mass bounds \eqref{limit monotonicity ineq} and the monotonicity formula for stationary varifolds \cite[5.1~(1)]{Allard}. 

Suppose then that $p \in \supp\|V\| \cap \Gamma$. In this case the argument is similar, but some technicalities need to be addressed. Fix a sequence of scales $s_i \to \infty$. The varifolds $(D_{s_i, p})_{\#}(V)$ have uniformly bounded mass on compact subsets of $\mathbb{R}^n$ by \eqref{limit monotonicity ineq}. Therefore, after passing to a subsequence, we may assume that $(D_{s_i, p})_{\#}(V)$ weak*-converges to a varifold tangent to $V$ at $p$, which we denote by $C_pV$. Moreover,
    \[\frac{1}{r^{n-1}} \|C_p V\|(B_r(0)) = \lim_{s \to 0} \frac{1}{s^{n-1}}\|V\|(B_{s}(p))\]
for all $r > 0$. 

For each vector field $g \in C^1_0(\mathbb{R}^n \setminus T_p\Gamma, \mathbb{R}^n)$, and all sufficiently large $i$, we have that $T_p\Gamma$ lies outside of the support of $g$. It follows that $(D_{s_i, p})_{\#}(V)$ is stationary with respect to $g$ for all large $i$, and hence $C_p V$ is stationary with respect to $V$. 

Let $\theta$ denote the reflection map across $T_p\Gamma$. By Allard's reflection principle \cite[3.2]{Allard_boundary}, the varifold $C_p V + \theta_\#(C_p V)$ is stationary with respect to each $g \in C^1_0(\mathbb{R}^n, \mathbb{R}^n)$. Since the rescaled mass $r^{1-n}\|C_p V\|(B_{r}(0))$ is constant in $r$, the monotonicity formula for stationary varifolds \cite[5.1 (2)]{Allard} implies that the projection of $x$ onto $S^\perp$ vanishes for $C_pV$-almost every $(x, S) \in G(B)$.
\end{proof}

\begin{remark}
In the proof of Lemma~\ref{tangent cone}, one can also use Allard's boundary version of the monotonicity formula for stationary varifolds (see \cite[3.4~(2)]{Allard_boundary} and \cite{Bourni}), rather than \eqref{limit monotonicity ineq}, to get uniform mass bounds for the sequence $(D_{s_i, p})_{\#}(V)$. Note however that Allard's result assumes $\|V\|(\Gamma \cap B) = 0$, which we proved using \eqref{limit monotonicity ineq}. 
\end{remark}

%%%%%%%%%%%%%%%%%%%%%%%%%%%%%%%%%%%%%%%%%%%%%%%%%
%%%%%%%%%%%%%%%%%%%%%%%%%%%%%%%%%%%%%%%%%%%%%%%%%
%%%%%%%%%%%%%%%%%%%%%%%%%%%%%%%%%%%%%%%%%%%%%%%%%
%%%%%%%%%%%%%%%%%%%%%%%%%%%%%%%%%%%%%%%%%%%%%%%%%
%%%%%%%%%%%%%%%%%%%%%%%%%%%%%%%%%%%%%%%%%%%%%%%%%
%%%%%%%%%%%%%%%%%%%%%%%%%%%%%%%%%%%%%%%%%%%%%%%%%
%%%%%%%%%%%%%%%%%%%%%%%%%%%%%%%%%%%%%%%%%%%%%%%%%
%%%%%%%%%%%%%%%%%%%%%%%%%%%%%%%%%%%%%%%%%%%%%%%%%

\section{Boundary behaviour in dimension 3}\label{section dimension 3}

We continue working in the setting of \eqref{local}, with $u_k$ a sequence of critical sections as in \eqref{sequence of sections}. In addition, we assume the ambient dimension is $n =3$, and that the sections $u_k$ have uniformly bounded Morse index in $X$. That is, there is a positive constant $I_0 \in \mathbb{N}$ such that 
    \[\sup_k \ind(u_k, X) \leq I_0.\]

By Theorem~\ref{main critical}, possibly after passing to a subsequence, we may assume the associated varifolds $V_k$ associated with $u_k$ weak*-converge to an integer rectifiable varifold $V$ on $B$. Moreover, we may assume that the energy measures $\mu_k$ weak*-converge to $\mu = \|V\|/2\sigma$. Let $\Sigma := \supp\|V\|$. 

By Corollary~\ref{stable regularity} (which, we recall, was stated as an immediate consequence of \cite{TW} and \cite{Guaraco}), we know that $\Sigma \setminus \Gamma$ is a smooth minimal surface in $X$. Let us remark that, since we are assuming $n=3$, this statement can also be obtained from the level-set estimates of Chodosh--Mantoulidis \cite{Chodosh-Mantoulidis}. 

We are interested in the behaviour of $\Sigma$ near the boundary $\Gamma$. Fix an arbitrary point $p \in \Gamma$. Due to Lemma~\ref{tangent cone}, we know we can extract a varifold tangent to $V$ at $p$. Let $C_p V$ denote such a tangent. By definition, there is a sequence of scales $s_i \to 0$ such that $C_p V$ is the weak*-limit of $(D_{s_i, p})_{\#}(V)$ as $i \to \infty$, where $D_{s_i, p}(x) := s_i^{-1}(x-p)$. We introduce the notation
    \[C_p \Sigma := \supp\|C_p V\|.\]

Let us first demonstrate that $C_p \Sigma$ is smooth away from $T_p \Gamma$. This follows since $C_p \Sigma$ can be obtained as the energy concentration set of a sequence of appropriate rescalings of $u_k$. We define
    \[\tilde B_i := D_{s_i, p}(B), \qquad \tilde \Gamma_i := D_{s_i, p}(\Gamma), \qquad \tilde X_i := \tilde B_i \setminus \tilde \Gamma_i,\]
and define $\tilde L_i \to \tilde X_i$ to be the pullback of $L$ via $D_{s_i, p}^{-1}$. Note that $\tilde L_i$ is simply the nontrivial real line bundle over $\tilde X_i$. We equip $\tilde L_i$ with the bundle metric and connection, still denoted $\langle \cdot, \cdot \rangle$ and $\nabla$, obtained by pulling back those on $L$. With this notation, 
        \[\tilde u_{k,i}(x) := u_{k}(s_i x + p)\]
is a smooth section of $\tilde L_i$ which is critical for the energy $E_{s_i\varepsilon_k}$. Clearly $|\tilde u_{k,i}| \leq C_0$, and by Lemma~\ref{limit monotonicity}, for each ball $B' \subset \mathbb{R}^3$ we have 
    \[E_{s_i \varepsilon_k}(u_{k,i}, B') \leq C\]
for all sufficiently large $i$, where $C = C(n,\Gamma, C_1)$. 

For each $k$ and $i$, let $\tilde V_{k,i}$ denote the varifold associated with $\tilde u_{k,i}$. It is straightforward to check that 
    \[\tilde V_{k,i} = (D_{s_i, p})_{\#}(V).\]
Moreover, for an appropriate subsequence $k_i$, we may assume $\tilde V_{k_i,i}$ weak*-converges to $C_p V$ as $i \to \infty$. (In particular, we require that $\tilde \varepsilon_i := s_i^{-1} \varepsilon_{k_i} \to 0$ as $i \to \infty$.) Since the rescalings $\tilde u_i := u_{k_i, i}$ satisfy
    \[\ind(\tilde u_i, \tilde X_i) = \ind(u_{k_i}, X) \leq I_0,\]
by Corollary~\ref{stable regularity}, the set $C_p \Sigma \setminus T_p \Gamma$ is a smooth minimal surface. Moreover, by Lemma~\ref{tangent cone}, $C_p \Sigma$ is a cone. Note we also know that $C_p V$ has integer multiplicity by Theorem~\ref{main critical} (this can alternatively be deduced from the compactness theorem for integer rectifiable varifolds \cite[6.4]{Allard}). 

Our goal now is to characterise $C_p \Sigma$ as a union of halfplanes. 

\begin{lemma}
The set $C_p\Sigma$ consists of a collection of halfplanes $P_1, \dots, P_N$ which meet along $T_p \Gamma$. In particular, there are positive integers $m_1, \dots, m_N$ such that $C_p V = \sum_{j=1}^N m_j V_{P_j}$.
\end{lemma}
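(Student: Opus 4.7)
The plan is to exploit two facts: the smooth minimal-cone structure of $C_p \Sigma \setminus T_p \Gamma$ (already established in the preceding paragraphs of this section via Corollary~\ref{stable regularity} and Lemma~\ref{tangent cone}), and the topological requirement $T_p \Gamma \subset C_p \Sigma$, which comes from $\tilde\Gamma_i \subset \supp \|\tilde V_{k_i,i}\|$ for every $i$ (Theorem~\ref{main critical}(iii)) together with the uniform lower density bound in Theorem~\ref{main critical}(iv); these propagate to the limit to give $T_p \Gamma \subset \supp \|C_p V\|$.

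First I would classify connected smooth minimal $2$-cones in $\R^3$ with apex at the origin and singular set contained in $T_p \Gamma$. Such a cone is determined by its intersection with the unit sphere $S^2$, and this intersection is a smooth one-dimensional minimal submanifold of $S^2 \setminus \{\text{poles}\}$, where the two poles are the points $\pm T_p \Gamma \cap S^2$. Hence it is a disjoint union of geodesic arcs of $S^2$, and each connected component is either (a) a full great circle avoiding both poles, or (b) an open half of a great circle meridian, with limit endpoints at the two poles. Coning from the origin, type~(a) yields a full plane through the origin not containing $T_p \Gamma$, while type~(b) yields a closed halfplane whose boundary is $T_p \Gamma$. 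The mass bound from Lemma~\ref{limit density} applied to the rescaled sequence forces only finitely many components.

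The key step, and the main obstacle, is to rule out full plane components. Suppose for contradiction that some component $P \subset C_p \Sigma$ is a full plane not containing $T_p \Gamma$. If $C_p \Sigma = P$, then $\supp \|C_p V\| = P$, contradicting $T_p \Gamma \subset \supp \|C_p V\|$ since $T_p \Gamma \not \subset P$. Otherwise $C_p \Sigma$ contains an additional component $Q$, either another full plane or a halfplane bounded by $T_p \Gamma$. Both $P$ and $Q$ contain the origin, so $P \cap Q$ contains a line $\ell$ through the origin. Because $P \not\supset T_p \Gamma$, the line $\ell$ is distinct from $T_p \Gamma$. If $Q$ is a full plane, the whole line $\ell$ lies in $Q$; if $Q$ is a halfplane with boundary $T_p \Gamma$, one of the two rays of $\ell$ from the origin lies in $Q$ (the only intersection of $\ell$ with $T_p \Gamma$ is the origin). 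In either case we obtain points of $C_p \Sigma \setminus T_p \Gamma$ at which the smooth sheets $P$ and $Q$ cross transversely, contradicting the smoothness of $C_p \Sigma$ away from $T_p \Gamma$. Hence no full plane component exists, and $C_p \Sigma = P_1 \cup \cdots \cup P_N$ is a finite union of halfplanes all meeting along $T_p \Gamma$.

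Finally, I would recover the multiplicities. The varifold $C_p V$ is integer rectifiable on $\R^3$, which one obtains by applying Theorem~\ref{main critical} to the rescaled sequence $\tilde u_i$ on each ball $B_R(0)$ and extracting a diagonal subsequence. On each open halfplane $P_j \setminus T_p \Gamma$, which is a connected smooth minimal surface, the multiplicity function of $C_p V$ is an integer-valued measurable function; stationarity of $C_p V$ with respect to compactly supported vector fields in $\R^3 \setminus T_p \Gamma$ then forces this function to be constant on $P_j \setminus T_p \Gamma$. The inclusion $P_j \subset \supp \|C_p V\|$ ensures this integer is some $m_j \geq 1$, so $C_p V = \sum_{j=1}^N m_j V_{P_j}$, as claimed.
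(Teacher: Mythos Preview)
Your proof is correct and structurally parallel to the paper's, but the key step---ruling out a single full plane---is handled differently. Both arguments classify the components of $C_p\Sigma \cap (S^2 \setminus \{p_\pm\})$ as great circles or meridian half-circles, and both use embeddedness of $C_p\Sigma \setminus T_p\Gamma$ to exclude a full plane coexisting with any other component (your transverse-intersection argument is exactly what the paper compresses into ``Since $C_p\Sigma$ is closed and hence properly embedded \dots''). The divergence is in excluding a \emph{lone} full plane: the paper returns to the sections $\tilde u_i$, using Hausdorff convergence of sublevel sets (Theorem~\ref{main critical}\eqref{hausdorff}) to get $|\tilde u_i|\to 1$ on $S^2\setminus(\alpha_1\cup\{p_\pm\})$, and then derives a contradiction from the fact that $\tilde u_i$ must vanish on some loop in that set linking $T_p\Gamma$. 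You instead invoke $T_p\Gamma \subset C_p\Sigma$ directly, which is shorter and purely varifold-theoretic. One small correction: your justification of $T_p\Gamma \subset C_p\Sigma$ is misrouted. The statement ``$\tilde\Gamma_i \subset \supp\|\tilde V_{k_i,i}\|$'' is not what Theorem~\ref{main critical}(iii) says (that concerns the limit varifold, not individual $V_k$), and in any case carries no uniform lower density bound. The clean route is to work with the rescalings $(D_{s_i,p})_\# V$ of the \emph{limit} $V$: Theorem~\ref{main critical}(iii),(iv) give $\Gamma \subset \supp\|V\|$ with a scale-invariant lower density bound, which rescales and passes to the weak*-limit $C_pV$ immediately. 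The multiplicity step via the constancy theorem matches the paper.
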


\begin{proof}
Let $p_{\pm}$ be the points where $T_p\Gamma$ intersects $S^2 = \partial B_1(0)$. Since $C_p \Sigma$ is a smooth minimal cone, its intersection with $S^2 \setminus \{p_{\pm}\}$ is a collection of geodesics $\alpha_1, \dots, \alpha_N$. Since $C_p \Sigma$ is closed and hence properly embedded in $\mathbb{R}^3 \setminus T_p\Gamma$, we either have that the $\alpha_1, \dots, \alpha_N$ are all half-circles with endpoints at $p_{\pm}$, or else $N =1$ and $\alpha_1$ is a great circle. 

Suppose $N=1$ and $\alpha_1$ is a great circle. We then have that $|\tilde u_i| \to 1$ locally uniformly in $S^2 \setminus (\alpha_1 \cup \{p_{\pm}\})$, by Theorem~\ref{main critical}. But $\tilde u_i$ must vanish on any loop $\gamma$ in $\mathbb{R}^3 \setminus T_p \Gamma$ which satisfies $\link_{\mathbb{Z}}(\gamma, T_p \Gamma) = 1$. We can find such a loop in $S^2 \setminus (\alpha_1 \cup \{p_{\pm}\})$, so we have reached a contradiction. 

We conclude that the intersection of $C_p \Sigma$ with $S^2 \setminus \{p_{\pm}\}$ consists of half-circles $\alpha_1, \dots, \alpha_N$ with endpoints at $p_{\pm}$. Equivalently, $C_p\Sigma$ is the set of halfplanes $P_1, \dots, P_N$ generated by $\alpha_1, \dots, \alpha_N$. Given that $C_p V$ has integer multiplicity, by the constancy theorem (see e.g. \cite[4.1]{SimonGMT}), there are positive integers $m_1, \dots, m_N$ such that $C_p V = \sum_{j=1}^N m_j V_{P_j}$. 
\end{proof}

We conclude this section with the proof of Theorem~\ref{stable boundary cone}. It only remains to establish that $\sum_{j=1}^N m_j$ is odd. 

\begin{proof}[Proof of Theorem~\ref{stable boundary cone}]
We construct a loop $\gamma$ in $\mathbb{R}^3 \setminus T_p\Gamma$ such that $\link_{\mathbb{Z}}(\gamma, T_p \Gamma) = \pm 1$ and $\tilde u_i$ has $m_1 + \dots + m_N$ isolated zeroes on $\gamma$ for all large $i$. Since $\tilde L_i$ is nontrivial, it then follows that $m_1 + \dots + m_N$ is odd. 

For each $j = 1, \dots, N$, let $p_j$ denote the point where $P_j$ meets the equator $S^2 \cap N_p \Gamma$. Let $\nu_j$ denote a unit normal to $P_j$. For each $j$ we define a truncated cylinder 
    \[Z_j := \{x + s\nu_j : (x-p_j) \cdot \nu_j = 0, \; |x-p_j|< \delta, \; |s| < \delta\}.\]
By choosing $\delta$ to be sufficiently small we can arrange that $Z_j \cap C_p \Sigma = Z_j \cap P_j$. We now let $p_{j,l}$, $l = 1, \dots, I_0 + 1$, be a collection of points in $Z_j \cap P_j$, and define for each $p_{j,l}$ a thinner truncated cylinder inside $Z_j$, as follows:
    \[Z_{j,l} := \{x + s\nu_j : (x-p_{j,l}) \cdot \nu_j = 0, \; |x-p_{j,l}|< 10^{-2}(I_0 + 1)^{-1}\delta, \; |s| < \delta\}.\]
We assume the points $p_{j,l}$ are chosen so that the $Z_{j,l}$ are all mutually disjoint. 

Since $\ind(\tilde u_i, \tilde X_i) \leq I_0$, for every $j$, we know that $\tilde u_i$ is stable in at least one of the cylinders $Z_{j,l}$. Possibly after passing to a subsequence in $i$, and relabeling indices, we may assume $\tilde u_i$ is stable in $Z_j' := Z_{j,1}$ for every $j$. Recall that the associated varifolds $\tilde V_i$ weak*-converge to $m_j V_{P_j}$ on $Z_j$, and hence on $Z_j'$. Let 
    \[\hat Z_j := \{x + s\nu_j : (x-p_{j,1}) \cdot \nu_j = 0, \; |x-p_{j,1}|< 10^{-3}(I_0 + 1)^{-1}\delta, \; |s| < \delta\}.\]

We analyse the convergence of $\{\tilde u_i = 0\}$ in each of the cylinders $Z_j'$. The argument is the same in each cylinder, so let us drop the index $j$, understanding that $P = P_j$, $m = m_j$, etc. Given that $\tilde u_i$ is stable in $Z'$, the curvature estimate of Chodosh--Mantoulidis \cite[Theorem~1.3]{Chodosh-Mantoulidis} implies that, for all sufficiently large $i$, the set $\{\tilde u_i = 0\} \cap \hat Z$ is a union of $\tilde m_{i}$ smooth graphs over $P$, each of which converges in $C^{1,\alpha}$ to $P \cap \hat Z$ as $i \to \infty$. 

Standard arguments show that $\tilde m_{i} = m$ for all large $i$. We only sketch the details. We define
    \[T_i := \{x \in \hat Z : \dist(x, \{\tilde u_i = 0\})\} \leq \Lambda \varepsilon, \]
where $\Lambda$ is a large constant to be chosen later. We recall that on $\mathbb{R}^3$ the only entire solutions of the scalar Allen--Cahn equation which are bounded, stable, and have quadratic area growth, are the 1-dimensional heteroclinic solutions and the constants $\pm 1$.\footnote{This is proven using the log cutoff trick. The argument is in \cite{FMV}. See also \cite{Ambrosio--Cabre} for related classification results.} Using this fact, together with a blow-up argument, it can be shown that if $\Lambda$ is sufficiently large then
    \[\limsup_{i \to \infty} |E_{\tilde \varepsilon_i}(\tilde u_i, T_i) - 2\sigma \tilde m_{i} \mathcal H^2(P \cap \hat Z)| \leq Ce^{-\Lambda/C},\]
where $C >0$ is a universal constant and, we recall, $2\sigma$ is the energy of the 1-dimensional heteroclinic solution. Moreover,
    \[\dist(\hat Z \setminus T_i, S_i) \geq \frac{\Lambda}{2} \tilde \varepsilon_i, \qquad \text{where} \qquad S_i := \{|\tilde u_i|\leq 9/10\}.\]
Standard interior estimates for linear elliptic equations show that for $x \in \hat Z \setminus T_i$ we have
    \[\tilde \varepsilon_i \frac{|\nabla \tilde u_i(x)|^2}{2} + \frac{W(\tilde u_i(x))}{\tilde \varepsilon_i} \leq C\tilde \varepsilon_i^{-1} e^{-\tilde \varepsilon_i^{-1}\dist(x, S_i)/C}.\]
Therefore, we may use the coarea formula to estimate 
    \begin{align*}
        E_{\tilde \varepsilon_i}(\tilde u_i, \hat Z \setminus T_i)  &\leq C r^2 \tilde \varepsilon_i^{-1} \int_{\Lambda \tilde \varepsilon_i / 2}^\infty e^{-\tilde \varepsilon_i^{-1}t/C} dt \leq C r^2 e^{- \Lambda/C}.
    \end{align*}
Combining all of this, we see that by choosing $\Lambda$ sufficiently large we can ensure that $E_{\tilde \varepsilon_i}(\tilde u_i, \hat Z)$ is as close as we like to $2\sigma \tilde m_{i} \mathcal H^2(P \cap \hat Z)$ for all large $i$. On the other hand,
    \[E_{\tilde \varepsilon_i}(\tilde u_i, \hat Z) \to 2\sigma m \omega_2 r^2\]
as $i \to \infty$. It follows that $\tilde m_{i} = m$ for all large $i$. 

To recap, we have shown that $\{\tilde u_i = 0\} \cap \hat Z_j$ has exactly $m_j$ graphical components whenever $i$ is sufficiently large (we now resume using the index $j$ to distinguish the planes $P_j$). We may deform the loop $\gamma := S^2 \cap N_p \Gamma$ slightly so that we still have $\link_{\mathbb{Z}}(\gamma, T_p \Gamma) = \pm 1$, but now 
    \[\gamma \cap Z_{j} \subset \hat Z_j\]
for each $j$. Moreover, we may assume $\gamma$ is monotone inside $\hat Z_j$, so that it intersects $\{\tilde u_i = 0\}$ exactly $m_j$ times inside $\hat Z_j$. Since $|\tilde u_i| \to 1$ locally uniformly away from $P_1 \cup \dots \cup P_N$, we may assume that 
    \[\{\tilde u_i = 0\} \cap \gamma \subset \hat Z_1 \cup \dots \cup \hat Z_N.\]
It follows that $\tilde u_i$ has $m_1 + \dots + m_N$ isolated zeroes on $\gamma$, and hence $m_1 + \dots + m_N$ is odd. 
\end{proof}

%%%%%%%%%%%%%%%%%%%%%%%%%%%%%%%%%%%%%%%%%%%%%%%%%
%%%%%%%%%%%%%%%%%%%%%%%%%%%%%%%%%%%%%%%%%%%%%%%%%
%%%%%%%%%%%%%%%%%%%%%%%%%%%%%%%%%%%%%%%%%%%%%%%%%
%%%%%%%%%%%%%%%%%%%%%%%%%%%%%%%%%%%%%%%%%%%%%%%%%
%%%%%%%%%%%%%%%%%%%%%%%%%%%%%%%%%%%%%%%%%%%%%%%%%
%%%%%%%%%%%%%%%%%%%%%%%%%%%%%%%%%%%%%%%%%%%%%%%%%
%%%%%%%%%%%%%%%%%%%%%%%%%%%%%%%%%%%%%%%%%%%%%%%%%
%%%%%%%%%%%%%%%%%%%%%%%%%%%%%%%%%%%%%%%%%%%%%%%%%

\section{Minimizing sections and Plateau's problem}\label{minimizers}

Let $\Gamma$ denote a compact codimension-2 submanifold of $\mathbb{R}^n$, where $n \geq 3$. In this section we write $L$ for the spanning bundle over $X = \mathbb{R}^n \setminus \Gamma$. A section $u \in W^{1,2}_{\loc}(X, L)$ is said to be minimizing for $E_\varepsilon$ if 
    \[E_\varepsilon(u) \leq E_{\varepsilon}(v)\]
for every section $v \in W^{1,2}_{\loc}(X,L)$. We are concerned with the global behaviour of minimizing sections in the limit as $\varepsilon \to 0$. Our ultimate goal is to prove Theorem~\ref{Plateau theorem}, which asserts that minimizers concentrate energy on a solution of Plateau's problem (at least when $n = 3$, or when $4 \leq n \leq 7$ and $\Gamma$ lies in a strictly convex hypersurface). 

We first prove that at least one minimizer of $E_\varepsilon$ exists for each positive $\varepsilon$. The proof is a standard application of compactness theorems for Sobolev functions. 

\begin{lemma}\label{existence minimizers}
For every $\varepsilon > 0$ there exists a smooth section $u$ of $L$ which minimizes $E_\varepsilon$.
\end{lemma}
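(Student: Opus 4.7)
The plan is to apply the standard direct method of the calculus of variations, with some care taken to handle the line bundle structure and the noncompact domain $X$.

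First, I would exhibit a smooth competitor with finite energy. Since $\Gamma$ is null-homologous mod $2$ in $\mathbb{R}^n$, one can find a bounded (Lipschitz or smooth) hypersurface $S$ with $\partial S = \Gamma$ mod $2$ such that $L$ trivializes on $X \setminus S$; the existence of such an $S$ is spelled out in Section~\ref{topology} via the correspondence with the first Stiefel--Whitney class. Using the signed distance $d_S$ and a cutoff, define $u_0$ to be the one-dimensional heteroclinic profile $\tanh(d_S/\varepsilon\sqrt{2})$ in a small tubular neighbourhood of $S$, extended to $\pm 1$ outside. A direct calculation gives $E_\varepsilon(u_0) \leq C(\varepsilon, S, \Gamma) < \infty$, so the infimum $m := \inf_v E_\varepsilon(v)$ is finite.

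Next, pick a minimizing sequence $u_j \in W^{1,2}_{\loc}(X, L)$. The truncation $u_j \mapsto u_j/\max\{1,|u_j|\}$ makes sense on sections of a metric line bundle and can only decrease the Dirichlet term (standard chain rule) and the double-well term (since $W$ is decreasing on $[0,1]$), so we may assume $|u_j| \leq 1$ throughout $X$. The uniform bound $E_\varepsilon(u_j) \leq m+1$ combined with fixed $\varepsilon$ yields a uniform $W^{1,2}$ bound on any precompact $\Omega \Subset X$. Working in local trivializations of $L$ and using a diagonal argument over an exhaustion $\Omega_\ell \nearrow X$ together with Rellich--Kondrachov, extract a subsequence converging to some $u \in W^{1,2}_{\loc}(X, L)$ weakly in $W^{1,2}_{\loc}$, strongly in $L^2_{\loc}$, and pointwise a.e.

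Then pass to the limit. For each precompact $\Omega$, the Dirichlet integral is weakly lower semicontinuous, and $|u_j| \leq 1$ combined with dominated convergence gives $\int_\Omega W(u_j)\,dx \to \int_\Omega W(u)\,dx$. Taking the supremum over the exhaustion yields
    \[E_\varepsilon(u) \leq \liminf_{j\to\infty} E_\varepsilon(u_j) = m,\]
so $u$ is a minimizer. Since $u$ minimizes, it is in particular critical; the Euler--Lagrange equation $\varepsilon^2 \Delta u = W'(u)$ is subcritical, so the standard elliptic regularity cited in the introduction promotes $u$ to a smooth section.

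The only real obstacle is Step 1, namely producing a finite-energy smooth section of the nontrivial spanning bundle; once a suitable cut surface $S$ is in hand this reduces to the model calculation on either side of $S$, and the remaining steps are routine applications of weak compactness, truncation, lower semicontinuity and elliptic regularity. The noncompactness of $X$ is dealt with cleanly by the exhaustion argument, since the energy density is nonnegative and both terms behave well under the convergence described above.
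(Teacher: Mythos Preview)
Your proof is correct and follows essentially the same direct-method approach as the paper. Two minor tactical differences are worth noting: you truncate the minimizing sequence to $|u_j|\leq 1$ (which gives trivial $L^2_{\loc}$ bounds and lets you use dominated convergence on the potential term), whereas the paper skips truncation, bounds $\int_B |u_k|^2$ directly via $|u|^4 \leq C(1+W(u))$, and then appeals to Fatou's lemma for the potential; also, you spell out the finite-energy competitor in this lemma, while the paper simply asserts $\bar E<\infty$ here and postpones the explicit construction to Lemma~\ref{global energy}. Both routes are standard and neither has a real advantage over the other.
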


\begin{proof}
Fix $\varepsilon > 0$ and let $\bar E$ be the infimum of $E_\varepsilon(v)$, taken over all sections $v \in W^{1,2}_{\loc}(X,L)$. We have $\bar E < \infty$. Let $u_k$ be a sequence of $W^{1,2}_{\loc}$-sections such that $E_{\varepsilon}(u_k) \to \bar E$ as $k \to \infty$. After passing to a subsequence, we may assume $E_{\varepsilon}(u_k) \leq 1 + \bar E$. In any ball $B \subset X$ we have
    \[\bigg(\int_B |u|^2\bigg)^2 \leq \mathcal{H}^n(B) \int_B |u|^4 \leq  \mathcal{H}^n(B) \bigg(\mathcal{H}^n(B\cap\{|u|^2\leq 2\}) + 16\int_B W(u)\bigg),\]
so $\|u_k\|_{W^{1,2}}(B)$ is bounded independently of $k$. Choosing a countable covering of $X$ by open balls and appealing to a diagonal argument, we find that there is a section 
    \[u \in W^{1,2}_{\loc}(X, L)\]
such that $u_k \to u$ weakly in $W^{1,2}(B)$ for every open ball $B \subset X$. By the Sobolev embedding theorem, we may also assume that $u_k \to u$ in $L^q(B)$ for every $B$, where $q = \tfrac{1}{2} - \tfrac{1}{n}$. Then $u_k \to u$ pointwise almost everywhere (possibly after passing to another subsequence), and hence by Fatou's lemma we have 
    \[\int_X \frac{W(u)}{\varepsilon} \leq \liminf_{k\to\infty}\int_X \frac{W(u_k)}{\varepsilon}.\]
Since the Dirichlet integral is lower semicontinuous with respect to weak convergence in $W^{1,2}$, we have 
    \[E_{\varepsilon}(u) \leq \liminf_{k\to\infty}E_\varepsilon(u_k) = \bar E,\]
which is to say that $u$ minimizes $E_{\varepsilon}$. Finally, since $u$ is in particular critical for $E_\varepsilon$, it is smooth. 
\end{proof}

We would like to apply Theorem~\ref{main critical global} to sequences of minimizers with $\varepsilon \to 0$. To do so we need to show that minimizers satisfy uniform length and energy bounds. 

\begin{lemma}\label{length}
Fix $\varepsilon > 0$ and suppose $u$ is a minimizing section for $E_\varepsilon$. We then have 
        \[\sup_X |u| \leq 1.\]
\end{lemma}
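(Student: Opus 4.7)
The plan is to derive a contradiction by constructing a comparison section of strictly smaller energy whenever $\sup_X |u| > 1$, obtained from $u$ by a fiberwise radial truncation to the unit ball of $L$.

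First, define a Lipschitz section $\tilde u$ of $L$ by the pointwise rule
    \[\tilde u(x) := \min\{1, 1/|u(x)|\}\, u(x) \quad \text{where } u(x) \neq 0, \qquad \tilde u(x) := 0 \quad \text{where } u(x) = 0.\]
Equivalently, $\tilde u$ agrees with $u$ on $\{|u| \leq 1\}$ and equals $u/|u|$ on $\{|u| > 1\}$. Since the scalar map $\phi : \mathbb{R}_{\geq 0} \to \mathbb{R}_{\geq 0}$, $\phi(t) = \min\{t,1\}$ is Lipschitz, and scalar multiplication preserves the bundle structure, $\tilde u$ is a well-defined section of $L$ with $|\tilde u| \leq 1$ everywhere. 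A standard chain-rule argument (applied fiberwise using a local trivialisation) shows $\tilde u \in W^{1,2}_{\loc}(X,L)$, with $\nabla \tilde u = \nabla u$ almost everywhere on $\{|u| \leq 1\}$ and $\nabla \tilde u = \nabla(u/|u|)$ almost everywhere on $\{|u| > 1\}$.

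Next I would compare the energies. For the potential term, since $W(s) = \frac{1}{4}(1-s^2)^2$ is strictly decreasing on $[1,\infty)$ and $|\tilde u| = \min\{|u|, 1\}$, we have $W(\tilde u) \leq W(u)$ pointwise, with strict inequality on $\{|u| > 1\}$. For the gradient term, on $\{|u| > 1\}$ I would use $\nabla |u| = \langle \nabla u, u/|u|\rangle$ together with $\tilde u = u/|u|$ to compute
    \[|\nabla \tilde u|^2 = \frac{|\nabla u|^2}{|u|^2} - \frac{\langle \nabla u, u/|u|\rangle^2}{|u|^2} \leq \frac{|\nabla u|^2}{|u|^2} \leq |\nabla u|^2,\]
where the last inequality uses $|u| \geq 1$. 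Combining the two estimates gives $E_\varepsilon(\tilde u) \leq E_\varepsilon(u)$.

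Finally, assume for contradiction that $\sup_X |u| > 1$. Since $u$ is smooth, the open set $\{|u| > 1\}$ has positive Lebesgue measure, and the comparison of the potential terms then yields the strict inequality $E_\varepsilon(\tilde u) < E_\varepsilon(u)$. As $E_\varepsilon(\tilde u) < \infty$, this contradicts the minimality of $u$. Hence $|u| \leq 1$ everywhere. The main subtlety is just verifying that the fiberwise truncation really produces an admissible $W^{1,2}_{\loc}$ competitor section of $L$ (as opposed to a function); this is routine since truncation commutes with a local choice of unit section and the resulting scalar truncation is a standard $W^{1,2}$ operation.
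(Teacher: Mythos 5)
Your proposal is correct and is essentially the same argument as the paper's: both truncate the modulus by replacing $u$ with $\min\{|u|,1\}\,u/|u|$, observe that the potential strictly decreases on $\{|u|>1\}$ while the gradient term does not increase, and contradict minimality. You simply spell out the gradient computation and the $W^{1,2}_{\loc}$ admissibility that the paper leaves as "not difficult to check."
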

\begin{proof}
Let $e := u/|u|$ on the set $\{|u| > 0\}$. We then define a new section $\tilde u$ such that 
    \[\tilde u = \min\{|u|,1\} e\]
on the set $\{|u| > 0\}$, and $\tilde u = 0$ elsewhere. It is not difficult to check that $\tilde u$ is locally Lipschitz continuous and, unless $|u| \leq 1$ holds everywhere, $E_{\varepsilon}(\tilde u) < E_{\varepsilon}(u)$. Given that $u$ is minimizing this is impossible, so $|u| \leq 1$.  
\end{proof}

We now construct competitors which can be used to bound the energy of a minimizer independently of $\varepsilon$. The construction becomes particularly simple when $\Gamma$ is the boundary of a smooth hypersurface, but this is not always the case for $n \geq 4$. As a result, some care is needed in a neighbourhood of $\Gamma$. 

\begin{lemma}\label{global energy}
Consider a constant $\varepsilon_0 > 0$. For each $\varepsilon \in (0,\varepsilon_0)$ there is a locally Lipschitz section $v$ of $L$ such that $E_\varepsilon(v) \leq C$, where $C = C(n,\Gamma, \varepsilon_0)$ is a constant. 
\end{lemma}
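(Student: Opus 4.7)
My plan is to construct $v$ by gluing two pieces: a transverse $\tanh$-heteroclinic profile across a chosen spanning hypersurface $N$, and a local boundary patch near $\Gamma$ built via the complex-square substitution from the introductory example.

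The structural input is a compact piecewise-smooth set $N \subset \mathbb{R}^n$ with $\mathcal{H}^{n-1}(N) < \infty$, smooth in $X$, such that the mod-$2$ intersection number of any smooth loop in $X$ with $N$ agrees with the value of $w_1(L)$ on that loop; equivalently, $L|_{X \setminus N}$ is trivial. In ambient dimension $n = 3$, any Seifert surface spanning the link $\Gamma$ works. For general $n \geq 3$, existence of $N$ follows from the mod-$2$ Poincar\'{e}--Lefschetz duality recalled in Section~\ref{topology}: the Poincar\'{e} dual to $w_1(L)$ is represented by a compact relative $(n-1)$-cycle in $X$ whose boundary lies on $\{\rho = \delta\}$ for any small $\delta > 0$, and a simplicial smoothing produces the required $N$.

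Fix $\delta_2 \in (0, \delta_0)$ independent of $\varepsilon$, so that Fermi coordinates are defined on $\{\rho < \delta_2\}$. On the outer region $X \setminus \{\rho < \delta_2/2\}$, trivialize $L$ by a unit section $e$ defined on the complement of $N$, choose a signed distance $d_N$ compatible with the trivialization, and put $v_{\mathrm{ext}} := \tanh(d_N/(\varepsilon\sqrt{2}))\,e$, extended by $e$ outside a large ball containing $\Gamma \cup N$. Since $|W(\tanh(t/\sqrt{2}))| + |(\tanh(t/\sqrt{2}))'|^2$ decays exponentially in $|t|$, the coarea formula gives
$$E_\varepsilon(v_{\mathrm{ext}}, X \setminus \{\rho < \delta_2/2\}) \leq 2\sigma\, \mathcal{H}^{n-1}(N) + o_\varepsilon(1).$$
Inside $\{\rho < \delta_2\}$, in Fermi coordinates $(z, y) \in \mathbb{R}^2 \times \Gamma$, the pullback of $L$ by $(z, y) \mapsto (z^2, y)$ is trivial, so sections of $L$ there correspond to functions odd in $z$. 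I take the section coming from
$$\tilde v(z, y) := \chi(|z|)\, \tanh\!\bigl(2 z_1 z_2 /(\varepsilon\sqrt{2})\bigr),$$
where $\chi$ is a smooth cutoff arranged so that the induced section matches $v_{\mathrm{ext}}$ on the annulus $\{\delta_2/2 \leq \rho \leq \delta_2\}$. The Euclidean area element on $\{\rho < \delta_2\}$ carries an extra factor $4|z|^2$ from the complex-square map, which absorbs the apparent $\varepsilon^{-1}$ divergence in the gradient term; a direct change of variables, analogous to the one following \eqref{z squared weak}, yields
$$E_\varepsilon(v, \{\rho < \delta_2\}) \leq C(n, \Gamma)\, \delta_2.$$
Summing the two contributions gives $E_\varepsilon(v) \leq C(n, \Gamma, \varepsilon_0)$ uniformly in $\varepsilon \in (0, \varepsilon_0)$.

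The main obstacle is producing the hypersurface $N$ in ambient dimension $n \geq 4$, where $\Gamma$ need not smoothly bound; however, since the $\tanh$-profile is insensitive to orientation, the mod-$2$ (rather than integral) representative suffices. With $N$ in hand, the remaining estimates reduce to the explicit Allen--Cahn energy $2\sigma$ of the $1$D heteroclinic and the complex-square cancellation near $\Gamma$ already exploited in Proposition~\ref{boundary derivatives}.
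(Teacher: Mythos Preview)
Your overall strategy---choose a spanning hypersurface $N$ on whose complement $L$ trivialises, and take a heteroclinic profile transverse to $N$---is the same as the paper's. But the boundary patch you write down does not define a section of $L$. In the double-cover coordinates $(z,y)$, sections of $L$ correspond to functions that are \emph{odd} under $z \mapsto -z$, whereas your
\[
\tilde v(z,y) = \chi(|z|)\,\tanh\!\big(2z_1 z_2/(\varepsilon\sqrt{2})\big)
\]
is \emph{even}: indeed $2(-z_1)(-z_2) = 2z_1 z_2$ (equivalently, $2z_1 z_2 = \operatorname{Im}(z^2)$ is already a function of $z^2$). So $\tilde v$ descends to a function on the base, not to a section of the nontrivial bundle. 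A corrected odd profile such as $\tanh(z_2/(\varepsilon\sqrt{2}))$ has downstairs nodal set equal to a single half-plane through $\Gamma$; but then you would also need to arrange that your $N$ meets $\Gamma$ in exactly one sheet at every point, and that the inner and outer profiles genuinely match on the annulus $\{\delta_2/2 \le \rho \le \delta_2\}$---neither is addressed. (Your explanation of the energy bound near $\Gamma$ is also off: the $4|z|^2$ Jacobian cancels against the $(4|z|^2)^{-1}$ in the inverse metric for the $z$-derivatives, rather than absorbing an $\varepsilon^{-1}$; the finiteness comes simply from the profile concentrating on an $\varepsilon$-thin set.)

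The paper avoids the two-region splitting altogether. It obtains the spanning hypersurface as the transverse zero set of a generically perturbed smooth section (via Thom transversality, after showing $L$ is finitely generated for $n\ge 3$), and then \emph{cones that zero set off} to $\Gamma$ inside a fixed tubular neighbourhood, producing a single locally Lipschitz hypersurface $\Sigma \subset X$ with $\mathcal H^{n-1}(\Sigma) < \infty$. A parallel unit section $e$ exists on all of $X\setminus\Sigma$, and one truncated $\tanh$ profile across $\Sigma$ gives the competitor globally. This removes the need for a separate boundary patch, for any matching, and for the (plausible but unjustified here) appeal to Poincar\'e--Lefschetz duality to produce $N$.
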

\begin{proof}
Fix a smooth section $w \in C^\infty(X,L)$. Although the nodal set of $w$ may not be a smooth hypersurface, it can be perturbed to a section which does have this property, as follows.

First we show that $L$ is finitely generated. That is, there is a finite collection of smooth sections $v_i$, $1 \leq i \leq I$, such that, for every $x \in X$, the vectors $v_i(x)$ span the fiber over $x$. This follows easily from the fact that $X$ can be covered by finitely many open sets $U_i$, $1 \leq i \leq I$, over which $L$ trivialises. Since $\Gamma$ is compact, we can take $U_1$ to be the exterior of a large ball containing $\Gamma$. Since we are assuming $n \geq 3$, $U_1$ is simply connected, and hence $L$ is trivial over $U_1$. We then consider a covering of $\Gamma$ by finitely many open $(n-2)$-dimensional balls. A normal neighbourhood of each of these is diffeomorphic to $B^2 \times B^{n-2}$, and can be covered by two simply connected sets by removing thin wedges from the $B^2$ factor. This gives sets $U_i$, $2 \leq i \leq I'$, which cover a tubular neighbourhood of $\Gamma$. The remainder of $X$ can then be covered by finitely many balls, which are the sets $U_i$, $I' \leq i \leq I$. For each $U_i$, we let $v_i$ be a unit-length section in $U_i$, but then multiply it by a cutoff function and extend so that it vanishes outside of $U_i$. This can be done so that at least one of the sections $v_i$ is nonzero at each $x \in X$.

Now let $F: X \times \mathbb{R}^I \to L$ be the smooth map defined by
    \[F(x,s) := w(x) + \sum_{i=1}^I s_i v_i(x).\]
Since at least one of the vectors $v_i$ is always nonzero, $F$ is transverse to the submanifold of $L$ traced out by its zero section. Therefore, by the Thom transversality theorem, for almost every $s$, the map $x \mapsto F(x,s)$ is transverse to the zero section of $L$. In particular, there exists a $\tilde s$ such that the zero-set of
    \[\tilde w(x) := F(x, \tilde s)\]
is a smooth hypersurface in $X$. Moreover, we can assume $\tilde s_1>0$, so that $\tilde w$ is nonzero outisde of a large compact subset containing $\Gamma$, and hence $\tilde w^{-1}(0)$ is a precompact subset of $\mathbb{R}^n$. 

We know $\tilde w^{-1}(0)$ is smooth, but it may not have finite $\mathcal H^{n-1}$-measure. We can rectify this as follows. Let $\delta >0$ be a small constant such that $\{\rho < \delta\}$ is a tubular neighbourhood of $\Gamma$ whose boundary meets $\tilde w^{-1}(0)$ transversally (recall the notation $\rho(x) = \dist(x, \Gamma)$). We define $\Sigma \subset X$ so that 
    \[\Sigma \cap \{\rho \geq \delta\} = \tilde w^{-1}(0) \cap \{\rho \geq \delta\}\]
and, writing $\pi$ for the nearest point projection to $\Gamma$,
    \[\Sigma \cap \{0 < \rho < \delta\} = \{sx + (1-s)\pi(x) : x \in \tilde w^{-1}(0) \cap \{\rho = \delta\}, \; s \in (0,1)\}.\]
Then $\Sigma$ is a locally Lipschitz hypersurface with $\mathcal H^{n-1}(\Sigma) < \infty$. Moreover, there is a unit section $e$ of $L$ which is defined everywhere in $X \setminus \Sigma$. Indeed, for $x \in (X \setminus \Sigma) \cap \{\rho \geq \delta\}$ we can take $e(x) = \tilde w(x)/|\tilde w(x)|$. We then extend $e$ to $(X\setminus\Sigma)\cap\{0<\rho < \delta\}$ by parallel transport (along e.g. normal segments emanating from $\Gamma$). It follows that $e$ is smooth and parallel in $X \setminus \Sigma$.

We define
    \[v(x) := \begin{cases}
                \psi(\dist(x, \Sigma)/\varepsilon) e(x) & \text{for } x \in X\setminus\Sigma,\\
                0 & \text{for } x \in \Sigma,
            \end{cases}
            \]
where $\psi:\mathbb{R}\to\mathbb{R}$ coincides with a 1-dimensional heteroclinic solution on a large set, in the following manner. We let
    \[\psi(z) := (1 - \xi(z/100))\tanh(z/\sqrt{2}) + \xi(z/100),\]
where $\xi : \mathbb{R} \to \mathbb{R}$ satisfies $0 \leq \xi \leq 1$, $\xi \equiv 0$ in $[-1,1]$, $\xi \equiv 1$ in $\mathbb{R}\setminus[-2,2]$ and $|\xi'| \leq 2$. Note that $\dist(\cdot, \Sigma)$ is a locally Lipschitz function on $\mathbb{R}^n$, so $v$ is locally Lipschitz. 

The $\mathcal H^{n-1}$-measure of $\{\dist(\cdot, \Sigma) = t\}$ varies continuously in $t$, and approaches $2 \mathcal H^{n-1}(\Sigma)$ as $t \to 0$. Therefore, there is a constant $\kappa > 0$ depending only on $\Sigma$ such that $\{\dist(\cdot, \Sigma) = t\} \leq 1+2 \mathcal H^{n-1}(\Sigma)$ for $t \leq \kappa$. A straightforward application of the coarea formula shows that when $\varepsilon \leq \kappa/200$ we have
    \[E_\varepsilon(v) \leq C(1+2\mathcal H^{n-1}(\Sigma)),\]
where $C$ is a universal constant. This completes the proof of the lemma for $\varepsilon \leq \kappa/200$. 

If instead $\kappa/200 < \varepsilon < \varepsilon_0$, we can simply choose $v$ so that it vanishes inside a large ball containing $\Gamma$, has unit length outside a somewhat larger ball, and satisfies $|\nabla v| \leq 2$. We then have 
    \[E_\varepsilon(v) \leq C(\varepsilon + \varepsilon^{-1}) \leq C(\varepsilon_0 + 200\kappa^{-1}),\]
where $C$ depends only on $n$ and $\Gamma$. With this the claim is proven.
\end{proof}

\begin{remark}
Throughout this section of the paper we always assume $n \geq 3$. We remark that the proof of Lemma~\ref{global energy} breaks down when $n = 2$. For example, the nontrivial line bundle over $\mathbb{R}^2 \setminus \{0\}$ fails to be finitely generated and, generically, the zero set of a smooth section of this bundle is a family of unbounded curves. 
\end{remark}

We may now apply Theorem~\ref{main critical global} and Corollary~\ref{stable regularity} to establish the following statement. 

\begin{proposition}\label{minimizer interior reg}
Fix a sequence $\varepsilon_k \to 0$, and let $u_k$ denote a sequence of sections of the spanning bundle over $X$ which minimize $E_{\varepsilon_k}$. After passing to a subsequence, the varifolds $V_k$ associated with $u_k$ weak*-converge to some limit, denoted $V$. Let $\Sigma := \supp\|V\|$. We then have that $\Sigma \setminus \Gamma$ is a smooth minimal hypersurface outside a set of Hausdorff-dimension at most $n-8$, and $V$ is the unit-multiplicity varifold induced by $\Sigma$.
\end{proposition}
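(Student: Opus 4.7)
The plan is to verify that the minimising sequence $u_k$ fits the framework of Theorem~\ref{main critical global}, deduce interior regularity from Corollary~\ref{stable regularity}, and then upgrade integer to unit multiplicity using the global minimality of each $u_k$.

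First I would establish the uniform bounds \eqref{sequence of sections}: Lemma~\ref{length} gives $\sup_X |u_k| \leq 1$, so $C_0 = 1$; for the energy, fix any $\varepsilon_0 > \sup_k \varepsilon_k$, use Lemma~\ref{global energy} to produce, for each $k$, a locally Lipschitz competitor $v_k$ with $E_{\varepsilon_k}(v_k) \leq C(n,\Gamma,\varepsilon_0)$, and invoke minimality to conclude $E_{\varepsilon_k}(u_k) \leq C(n,\Gamma,\varepsilon_0) =: C_1$. Theorem~\ref{main critical global} then gives, along a subsequence, the desired weak*-limit $V$, which is integer rectifiable and stationary with respect to vector fields tangent to $\Gamma$. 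Since minimisers are in particular stable, $\ind(u_k, X) = 0$ uniformly, so Corollary~\ref{stable regularity} applied with $\Omega = X$ shows that $\Sigma \setminus \Gamma$ is a smooth embedded minimal hypersurface outside a closed set of Hausdorff dimension at most $n-8$.

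It remains to prove unit multiplicity away from $\Gamma$, which is the only step that actually uses minimisation rather than mere stability. I would argue locally: at a regular point $p \in \Sigma \setminus \Gamma$, pick a small open ball $B' \Subset X$ over which $L$ trivialises and on which $\Sigma \cap B'$ is a smooth minimal graph of some integer multiplicity $m$. Any $\tilde u \in W^{1,2}_{\loc}(X,L)$ agreeing with $u_k$ outside $B'$ is a valid global competitor, so $u_k|_{B'}$ minimises $E_{\varepsilon_k}$ for its own boundary values; after fixing a unit section of $L$ over $B'$ this becomes a classical scalar Allen--Cahn minimisation, and the interior theory for scalar minimisers (as in \cite{Modica--Mortola, Modica} and the minimiser case of \cite{HT}) then forces $m = 1$. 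The main obstacle, if one were to avoid citing these scalar results, is the following cut-and-paste argument: assuming $m \geq 2$, the Hausdorff convergence of $\{|u_k| \leq 1/2\}$ to $\Sigma$ from Theorem~\ref{main critical} together with one-dimensional profile estimates (already invoked in the proof of Theorem~\ref{stable boundary cone}) show that $u_k$ carries $m$ stacked heteroclinic transitions across $\Sigma \cap B'$, contributing roughly $2 m \sigma \, \mathcal H^{n-1}(\Sigma \cap B')$ to $E_{\varepsilon_k}(u_k, B')$; replacing $u_k$ on a thin tubular neighbourhood of $\Sigma \cap B'$ by a section whose nodal set is a single copy of $\Sigma \cap B'$, produced by the one-dimensional heteroclinic profile in normal coordinates, yields a competitor whose energy on $B'$ is close to $2\sigma \mathcal H^{n-1}(\Sigma \cap B')$, strictly less than that of $u_k$ by a positive amount for large $k$, contradicting minimality.
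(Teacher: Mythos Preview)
Your proposal is correct and follows the paper's proof essentially line for line: Lemmas~\ref{length} and~\ref{global energy} supply the uniform bounds, Theorem~\ref{main critical global} gives the limit, Corollary~\ref{stable regularity} (via stability of minimisers) gives interior regularity, and unit multiplicity is handled by the cut-and-paste argument for local minimisers in \cite{HT}; the paper simply cites ``the proof of Theorem~2 on p.~78 of \cite{HT}'' at that step.

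One small caution about your optional sketch of the cut-and-paste: as written it glosses over the parity of $m$ (if $m$ is even the boundary values on the two sides of $\Sigma \cap B'$ agree, so the correct competitor has \emph{no} transition rather than one), and the appeal to the profile estimates from the proof of Theorem~\ref{stable boundary cone} relies on the $n=3$ Chodosh--Mantoulidis level-set estimate, which you do not have in general dimension. Neither point affects your main argument, since the direct citation of \cite{HT} already covers this and is exactly what the paper does.
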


\begin{proof}
Given Lemma~\ref{length} and Lemma~\ref{global energy}, Theorem~\ref{main critical global} implies that, after passing to a subsequence, $V_k$ weak*-converges to an integer rectifiable limit $V$. Since each $u_k$ is, in particular, stable, $\Sigma \setminus \Gamma$ is a smooth minimal hypersurface outside a set of Hausdorff-dimension at most $n-8$, by Corollary~\ref{stable regularity}. It remains to show that $V$ is the unit-multiplicity varifold induced by $\Sigma$. Since $u_k$ is minimizing, a straightforward cut-and-paste argument shows that points of multiplicity at least two do not arise---see the proof of Theorem~2 on p.~78 of \cite{HT}.
\end{proof}

Next we address the regularity of $\Sigma$ at the boundary $\Gamma$. 

\begin{proposition}\label{minimizer bdy reg}
Let $u_k$, $V$ and $\Sigma$ be as in Proposition~\ref{minimizer interior reg}. Suppose in addition that $n =3$, or else $n \geq 4$ and $\Gamma$ lies in a strictly convex hypersurface. Then there is a tubular neighbourhood of $\Gamma$ in which $\Sigma$ is a smooth hypersurface with boundary $\partial \Sigma = \Gamma$. 
\end{proposition}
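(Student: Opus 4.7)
The plan is to show that at every point $p \in \Gamma$, every varifold tangent $C_pV$ to $V$ at $p$ is a unit-multiplicity halfplane with edge $T_p\Gamma$. Granted this, Allard's boundary regularity theorem \cite{Allard_boundary} yields that $\Sigma$ is a smooth hypersurface with boundary in an open neighbourhood $U_p$ of $p$ with $\partial \Sigma \cap U_p = \Gamma \cap U_p$. Compactness of $\Gamma$ then produces a uniform tubular neighbourhood in which $\Sigma$ is smooth with boundary $\Gamma$, as claimed. So the entire proof reduces to the classification of $C_pV$.

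Consider first the case $n \geq 4$ with $\Gamma \subset S$ for a strictly convex hypersurface $S$. Following \cite[Section~5]{Allard_boundary}, I would use $S$ as a barrier: for each $p \in \Gamma$ the affine tangent hyperplane $H_p$ to $S$ at $p$ separates $\Gamma$ (locally) strictly from one open halfspace. By Proposition~\ref{minimizer interior reg}, $V$ is stationary away from $\Gamma$, $\|V\|(\Gamma) = 0$, and $V$ is a unit-multiplicity varifold; under the rescaling $D_{s,p}$ the hyperplane $H_p$ converges to $T_pS$. The maximum principle for stationary varifolds (in the form used in \cite[5.2]{Allard_boundary}) then forces $\supp\|C_pV\|$ to lie in a single closed halfspace of $T_pS$ with boundary $T_p\Gamma$; combined with the lower density bound from Lemma~\ref{limit density}, this confines $\supp\|C_pV\|$ to a single halfplane, and the minimizing cut-and-paste argument below pins the multiplicity at $1$.

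For the harder case $n = 3$, I would invoke Theorem~\ref{stable boundary cone}: every $C_pV$ has the form $\sum_{j=1}^N m_j V_{P_j}$ with $P_j$ halfplanes with common edge $T_p\Gamma$ and $\sum m_j$ odd. To promote this to $N=1$, $m_1 = 1$, I would argue in two steps. First, each $m_j = 1$: if some $m_j \geq 2$, then in a suitably chosen cylinder $Z$ around an interior point $q \in P_j \setminus T_p\Gamma$ the rescaled minimizers $\tilde u_i$ from Section~\ref{section dimension 3} have at least two nearby nodal sheets graphical over $P_j$ (by the Chodosh--Mantoulidis curvature estimate \cite{Chodosh-Mantoulidis}); one can then replace $\tilde u_i$ on $Z$ by a section whose nodal set is a single sheet (using a Modica--Mortola recovery sequence, extended from $\partial Z$ by reflection or by the boundary-value problem as in \cite[p.~78]{HT}), saving energy at least $(m_j-1)\cdot 2\sigma\mathcal H^2(P_j \cap Z) - o(1)$, contradicting minimality for large $i$. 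Second, to rule out $N \geq 3$, I would pair up two of the halfplanes $P_i, P_j$ and smoothly reconnect them away from the edge into a pair of disjoint smooth graphs (a local desingularization of the book singularity), strictly decreasing $\mathcal H^2$ of the underlying surface at scale $r$ by an amount $\geq c r^2$; lifting this through a Modica--Mortola recovery sequence and gluing with $\tilde u_i$ on $\partial B_r$ via the usual interpolation on a shell of width $\tilde \varepsilon_i |\log \tilde \varepsilon_i|$ produces a competing section $v_i$ with $E_{\tilde \varepsilon_i}(v_i, B_r) < E_{\tilde \varepsilon_i}(\tilde u_i, B_r) - c r^2/2$, again contradicting minimality. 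Here one must respect the topology of the spanning bundle: the reconnection decreases the number of sheets by $2$, preserving the parity of the nodal count along loops linking $T_p\Gamma$, so the competitor is a legitimate section.

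The main obstacle is the second step, ruling out $N \geq 3$. Passing from a pointwise area decrease on the cone to an Allen--Cahn energy decrease on $\tilde u_i$ requires a careful recovery/interpolation argument near the edge $T_p\Gamma$ (where the boundary derivative estimates of Proposition~\ref{boundary derivatives} and the discrepancy control of Proposition~\ref{discrepancy integral} are essential to bound the error incurred by gluing to $\tilde u_i$ on the boundary of a small ball), and one must check that the reconnected configuration lifts consistently to a section of $\tilde L_i$. Once $N = 1$ and $m_1 = 1$ are in hand at each $p \in \Gamma$, Allard's boundary regularity theorem gives smoothness in a neighbourhood of $p$, and the uniform tubular neighbourhood follows by compactness.
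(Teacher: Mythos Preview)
Your overall strategy matches the paper's: reduce to showing every varifold tangent at $p\in\Gamma$ is a multiplicity-one halfplane, then invoke Allard's boundary regularity. The convex case and the $m_j=1$ step (via the cut-and-paste of \cite[p.~78]{HT}) are handled the same way.

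The difference lies in how you exclude $N\geq 3$ in dimension $3$. You propose to desingularize the book configuration near the edge $T_p\Gamma$, which forces you to work in a ball meeting $\tilde\Gamma_i$; this is why you flag the boundary derivative estimates, the nontriviality of $\tilde L_i$, and the parity constraint as obstacles. The paper sidesteps all of this by making the modification in a \emph{large ball $B\subset\mathbb{R}^3\setminus T_p\Gamma$}, chosen so that $P_1\cap B$ and $P_2\cap B$ are two disjoint flat disks dividing $B$ into three pieces. Since two of the halfplanes meet at angle at most $2\pi/3$, one can take $B$ close enough to (but not touching) the edge that the disks nearly meet near the boundary of $B$ facing the edge, and a small neck joining them strictly decreases area. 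Because $B$ is disjoint from $T_p\Gamma$, the bundle $\tilde L_i$ is trivial over $B$; one fixes a unit section and adds a compactly supported \emph{function} $w_i\in W^{1,2}_0(B)$ to $\tilde u_i$, built from a Modica--Mortola profile over the neck surface $S$. No boundary estimates near $\Gamma$, no interpolation shell of width $\tilde\varepsilon_i|\log\tilde\varepsilon_i|$ at the edge, and no bundle-topology check are needed: the competitor is automatically a section of $\tilde L_i$ since it agrees with $\tilde u_i$ outside $B$.

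So your approach is not wrong, but you have made the hard step harder than it needs to be. The ``main obstacle'' you identify dissolves once you move the ball off the edge.
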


\begin{proof}
In case $\Gamma$ lies in a strictly convex hypersurface, by the convex hull property for stationary varifolds \cite[Theorem~6.2]{SimonGMT}, we can simply apply \cite[5.2]{Allard_boundary} to conclude that $V$ has density $1/2$ at each point in $\Gamma$. The claim then follows immediately from Allard's boundary regularity theorem \cite{Allard_boundary}. 

Let us turn to the case $n=3$. Let $C_p V$ be a varifold tangent to $V$ at $p \in \Gamma$. As a consequence of Theorem~\ref{stable boundary cone}, we know that $C_p V$ is induced by halfplanes $P_1, \dots, P_N$ with multiplicities $m_1, \dots, m_N$ respectively, where $m_1 + \dots + m_N$ is odd. 

As in Section~\ref{section dimension 3}, we can find appropriate rescalings $\tilde u_i := u_{k_i, i}$ of $u_k$ whose associated varifolds converge to $C_p V$. We recall our notation---$\tilde u_i$ is a section of $\tilde L_i$ and is minimizing for $E_{\tilde \varepsilon_i}$ (where $\tilde \varepsilon_i \to 0$). By the same cut-and-paste argument referred to above, we can show that each halfplane $P_j$ occurs with multiplicity $m_j = 1$. Consequently, $N$ is odd. 

Suppose now, with the aim of deriving a contradiction, that $N \geq 3$. Then there is a pair of halfplanes in $\supp\|C_p V\|$ which meet at an angle not exceeding $2\pi/3$. Suppose without loss of generality these are the planes $P_1$ and $P_2$. Choose a large ball $B$ in $\mathbb{R}^3 \setminus T_p \Gamma$ such that these two planes divide $B$ into three pieces. If $B$ is large enough, we can cut a hole in each of $P_1 \cap B_R$ and $P_2 \cap B$ and attach them with a neck, in such a way that the resulting surface $S \subset B$ is smooth and has strictly less area then $(P_1 \cup P_2)\cap B$.  

Let us fix a unit section of $\tilde L_i$ over $B$, so that we may treat $\tilde u_i$ as a function on $B$. By a standard construction, we can add to $\tilde u_i$ a function $w_i \in W^{1,2}_0(B)$ so that $E_{\tilde \varepsilon_i}(\tilde u_i + w_i)$ is as close as we like to $2\sigma \mathcal H^2(S)$. Let us sketch the construction. We first fix a smaller ball $B_r \subset B$ so that $S$ and $P_1 \cup P_2$ agree in $B \setminus B_r$ and $\partial B_r$ intersects $S$ transversally. Let $\eta_i$ be a smooth function on $B$ such that $0 \leq \eta_i \leq 1$, $\eta_i = 1$ in $B_r$, $\eta_i = 0$ in $B\setminus B_{r + \delta \tilde \varepsilon_i}$, and $|\nabla \eta_i| \leq 2\delta^{-1}\varepsilon^{-1}$. We also define 
    \[v_i(x) := \begin{cases} \tanh\bigg(\frac{\dist(x,S)}{\tilde \varepsilon_i\sqrt{2}}\bigg) e(x) & \text{for } x \in B \setminus S,\\ 0 & \text{for } x \in S, \end{cases}.\]
By splitting up $B$ as 
    \[B = (B \setminus B_{r+\delta\tilde\varepsilon_i}) \cup (B_{r+\delta\tilde\varepsilon_i} \setminus B_r) \cup B_r\]
and using the coarea formula, we find that for $w_i := \eta_i(v_i - \tilde u_i)$ we have 
    \[|E_{\tilde \varepsilon_i}(\tilde u_i + w_i, B) - 2\sigma \mathcal H^2(S)| \leq C\delta\]
for all sufficiently large $i$, where $C > 0$ is a constant depending only on $n$ and $S$.  

Recall that 
    \[E_{\tilde \varepsilon_i}(\tilde u_i, B) \to 2\sigma \mathcal H^2((P_1\cup P_2) \cap B )\]
as $i \to \infty$. Therefore, since 
    \[\mathcal H^2(S) < \mathcal H^2((P_1 \cup P_2) \cap B),\]
by choosing $\delta$ small enough in the above construction we can ensure that the inequality
    \[E_{\tilde \varepsilon_i}(\tilde u_i + w_i, B) < E_{\tilde \varepsilon_i}(\tilde u_i, B)\]
holds for all large $i$. But this means $\tilde u_i$ is not a minimizer, so we have reached a contradiction. We thus have $N = 1$, and hence $C_p V$ is a halfplane with multiplicity one. It follows that $V$ has density $1/2$ at $p$, so we can apply \cite{Allard_boundary} to conclude that $\Sigma$ is a smooth surface with boundary in a neighbourhood of $p$. 
\end{proof}

\begin{remark}
It would be interesting to know whether the nodal set of a minimizing/stable section is in fact a hypersurface with boundary, at least if $n =3$ and $\varepsilon$ is sufficiently small. This might be proven by developing a regularity theory for nodal sets at boundary points analogous to the corresponding interior theory---see e.g. \cite{CC, Wang, Wang-Wei_a, Chodosh-Mantoulidis}. 
\end{remark}

Theorem~\ref{Plateau theorem} follows by combining Proposition~\ref{minimizer interior reg}, Proposition~\ref{minimizer bdy reg} and the following lemma. 

\begin{lemma}\label{proof of plateau}
Let $u_k$, $V$ and $\Sigma$ be as in Proposition~\ref{minimizer interior reg}. If $\Sigma'$ is a smooth hypersurface with boundary such that $\partial \Sigma' = \Gamma$, then
    \[\mathcal H^{n-1}(\Sigma)\leq \mathcal H^{n-1}(\Sigma')\]
\end{lemma}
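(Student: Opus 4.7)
The plan is to construct, for each $k$, a competitor $v_k$ for $u_k$ whose Allen--Cahn energy converges to $2\sigma\,\mathcal H^{n-1}(\Sigma')$ as $k \to \infty$, and then combine the minimality of $u_k$ with the weak* convergence from Theorem~\ref{main critical global}.

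First I would verify the topological input that $L|_{X \setminus \Sigma'}$ is trivial. Since $\partial \Sigma' = \Gamma$, any loop $\eta \subset X \setminus \Sigma'$ has even linking number with each component of $\Gamma$, as these linking numbers equal the mod~$2$ intersection numbers of $\eta$ with $\Sigma'$. Hence the homomorphism $H_1(X) \to \mathbb Z_2$ defining the spanning bundle vanishes on the image of $H_1(X \setminus \Sigma')$, and the flat bundle $L|_{X \setminus \Sigma'}$ admits a global parallel unit section $e$.

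Next I would define
\[v_k(x) := \psi_\Lambda\!\left(\frac{\dist(x,\Sigma')}{\varepsilon_k}\right) e(x) \text{ on } X \setminus \Sigma', \qquad v_k := 0 \text{ on } \Sigma',\]
where $\psi_\Lambda$ is a fixed smooth approximation to $\tanh(\cdot/\sqrt 2)$ that equals $\pm 1$ outside $[-\Lambda-1, \Lambda+1]$. Because $\dist(\cdot,\Sigma')$ is Lipschitz, $v_k \in W^{1,2}_{\loc}(X,L)$. Since $e$ is parallel and of unit length, $|\nabla v_k|^2 = \varepsilon_k^{-2}|\psi_\Lambda'(\dist/\varepsilon_k)|^2$ a.e., and the coarea formula together with $|\nabla \dist(\cdot,\Sigma')| = 1$ a.e.\ gives
\[E_{\varepsilon_k}(v_k) = \int_0^\infty \mathcal H^{n-1}\!\bigl(\{\dist(\cdot,\Sigma') = s\varepsilon_k\}\bigr)\left(\frac{|\psi_\Lambda'(s)|^2}{2} + W(\psi_\Lambda(s))\right) ds.\]
The integrand vanishes outside $s \in [0, \Lambda+1]$, and the slice areas $\mathcal H^{n-1}(\{\dist(\cdot,\Sigma')=t\})$ are uniformly bounded for small $t$ and converge to $2\,\mathcal H^{n-1}(\Sigma')$ as $t \to 0^+$ (the wrap-around near $\Gamma$ is $(n-2)$-dimensional and contributes $O(t)$). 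Dominated convergence then yields
\[\limsup_{k\to\infty} E_{\varepsilon_k}(v_k) \leq 2\,\mathcal H^{n-1}(\Sigma')\int_0^\infty\!\left(\frac{|\psi_\Lambda'|^2}{2} + W(\psi_\Lambda)\right) ds \leq 2\sigma\,\mathcal H^{n-1}(\Sigma') + C e^{-\Lambda/C},\]
and taking $\Lambda \to \infty$ gives $\limsup_k E_{\varepsilon_k}(v_k) \leq 2\sigma\,\mathcal H^{n-1}(\Sigma')$.

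To conclude, minimality of $u_k$ gives $E_{\varepsilon_k}(u_k) \leq E_{\varepsilon_k}(v_k)$ for each $k$. By Theorem~\ref{main critical global} and Proposition~\ref{minimizer interior reg}, $\mu_k = 2\sigma\,\|V_k\|$ converges weak* to $2\sigma \|V\| = 2\sigma\,\mathcal H^{n-1}\!\mres\!\Sigma$ as Radon measures on $\mathbb R^n$, and $\supp \|V\|$ is compact. Testing this convergence against a smooth cutoff that equals $1$ on a large open ball containing $\supp \|V\|$ gives $\liminf_k E_{\varepsilon_k}(u_k) \geq 2\sigma\,\mathcal H^{n-1}(\Sigma)$. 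Combining the two bounds and dividing by $2\sigma$ proves the inequality. The main technical step is the coarea estimate: one must handle the degeneracy of the tubular neighborhood of $\Sigma'$ at $\partial\Sigma' = \Gamma$, which is manageable precisely because $\Gamma$ has codimension two.
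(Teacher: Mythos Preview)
Your approach is essentially the same as the paper's: both construct the competitor $v_k$ by transplanting a truncated one-dimensional profile onto $\dist(\cdot,\Sigma')$ against a parallel unit section $e$ of $L|_{X\setminus\Sigma'}$, estimate its energy via the coarea formula, and compare with the energy of the minimizer $u_k$. The paper frames this as a proof by contradiction (assume $\mathcal H^{n-1}(\Sigma')<\mathcal H^{n-1}(\Sigma)$ and contradict minimality), whereas you argue directly via $\liminf_k E_{\varepsilon_k}(u_k)\geq 2\sigma\,\mathcal H^{n-1}(\Sigma)$ from weak* convergence; this is only a cosmetic difference. One small slip: it is not true that $\mu_k=2\sigma\|V_k\|$ for finite $k$; what holds is $\mu_k/2\sigma \to \|V\|$ weak*, which is all you use. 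The paper also separates out the case where $\Sigma'$ is noncompact, modifying the competitor outside a large ball; your formulation with $\psi_\Lambda\equiv\pm1$ off a compact interval handles this uniformly provided $\mathcal H^{n-1}(\Sigma')<\infty$ (otherwise there is nothing to prove).
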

\begin{proof}
Suppose, with the aim of deriving a contradiction, that $\Sigma'$ is a smooth hypersurface with boundary such that $\partial \Sigma' = \Gamma$ and 
    \[\mathcal H^{n-1}(\Sigma') < \mathcal H^{n-1}(\Sigma).\]
Let us assume that any connected components of $\Sigma'$ which are disjoint from $\partial \Sigma'$ have been discarded. A straightforward topological argument shows that the only real line bundle over $X \setminus \Sigma'$ is the trivial bundle. In particular, the restriction of $L$ to $X \setminus \Sigma'$ is trivial, and we may take $e : X \setminus \Sigma' \to L$ be a smooth unit section. 

Although we are not assuming $\Sigma'$ is compact, let us suppose for now that this is the case. We define a new section $u_k' : X \to L$, whose energy approximates $2\sigma \mathcal H^{n-1}(\Sigma')$, as follows. We set
    \[u_k'(x) = \begin{cases} \psi(\dist(x,\Sigma')/\varepsilon_k) e(x) & \text{for } x \in X \setminus \Sigma'\\ 0 & \text{for } x \in \Sigma',\end{cases}\]
where $\psi: \mathbb{R} \to \mathbb{R}$ is of the form
    \[\psi(z) := (1 - \xi(z/\Lambda))\tanh(z/\sqrt{2}) + \xi(z/\Lambda)\]
and $\xi : \mathbb{R} \to \mathbb{R}$ satisfies $0 \leq \xi \leq 1$, $\xi \equiv 0$ in $[-1,1]$, $\xi \equiv 1$ in $\mathbb{R}\setminus[-2,2]$ and $|\xi'| \leq 2$. The result, easily obtained from the coarea formula, is that for every ball $B \subset \mathbb{R}^n$ we have
    \[\limsup_{k \to \infty} |E_{\varepsilon_k}(u_k', B) -2\sigma \mathcal H^{n-1}(\Sigma' \cap B)| \leq C(1+\mathcal H^{n-1}(\Sigma' \cap B))e^{-\Lambda/C},\]
where $C > 0$ is a constant which may depend on $n$, $\Gamma$ and $B$. In particular, by choosing $B$ large enough so that it contains $\Sigma$ and $\Sigma'$, and then choosing $\Lambda$ to be sufficiently large depending on $B$, we can ensure that
    \[E_{\varepsilon_k}(u_k')  < E_{\varepsilon_k}(u_k)\]
for all large $k$. This shows that $u_k$ is not a minimizer when $k$ is large, which is a contradiction.

When $\Sigma'$ is noncompact we proceed in the same way, but modify $u_k'$ so that it has unit length outside of a large ball $B$. This can be done in such a way that $E_{\varepsilon_k}(u_k', B)$ is as close as we like to $2\sigma\mathcal H^{n-1}(\Sigma')$ for large $k$. Since $u_k'$ has no energy in the complement of $B$, this again contradicts $u_k$ being a minimizer. We omit the details concerning the modification of $u_k'$ (the construction is straightforward, and very similar to the construction of $w$ in the proof of Proposition~\ref{minimizer bdy reg}). 
\end{proof}

%%%%%%%%%%%%%%%%%%%%%%%%%%%%%%%%%%%%%%%
%%%%%%%%%%%%%%%%%%%%%%%%%%%%%%%%%%%%%%%
%%%%%%%%%%%%%%%%%%%%%%%%%%%%%%%%%%%%%%%
%%%%%%%%%%%%%%%%%%%%%%%%%%%%%%%%%%%%%%%
%%%%%%%%%%%%%%%%%%%%%%%%%%%%%%%%%%%%%%%
%%%%%%%%%%%%%%%%%%%%%%%%%%%%%%%%%%%%%%%
%%%%%%%%%%%%%%%%%%%%%%%%%%%%%%%%%%%%%%%
%%%%%%%%%%%%%%%%%%%%%%%%%%%%%%%%%%%%%%%

\section{Minimizing area in a homology class} 

Let $(M,g)$ be a compact Riemannian manifold of dimension $n$. A natural problem, sometimes referred to as the homological Plateau problem, is to prove the existence of an area-minimizer in every nonzero class $[\Sigma] \in H_{n-1}(M, \mathbb{Z}_2)$. The existence of such a minimizer follows from Fleming's theory of flat chains mod~2 \cite{Fleming} (see also \cite{Federer-Fleming}), and is one of the landmark achievements of geometric measure theory. 

In this section we discuss alternative approaches to the homological Plateau problem which employ phase transition models such as the Allen--Cahn energy. For a nonzero class $[\Sigma] \in H_{n-1}(M,\mathbb{Z}_2)$, the element of $H^1(M, \mathbb{Z}_2)$ Poincar\'{e}-dual to $[\Sigma]$ gives rise to a nontrivial real line bundle $L \to M$ (see Section~\ref{topology}). The zero-set of a generic smooth section of $L$ is a smooth cycle in $[\Sigma]$. We may equip $L$ with a metric and a flat metric connection, and for each section $u\in W^{1,2}(M,L)$, define
    \[E_\varepsilon(u) := \int_M \varepsilon \frac{|\nabla u|^2}{2} + \frac{(1-|u|^2)^2}{4\varepsilon}\,\dvol_g.\]
For each $\varepsilon > 0$ there is a smooth section $u_\varepsilon$ of $L$ which minimizes $E_\varepsilon$ (this can be proven exactly as in Lemma~\ref{existence minimizers}). Baldo and Orlandi \cite{Baldo--Orlandi} used $\Gamma$-convergence techniques to show that, for a sequence $\varepsilon_k \to 0$, the sections $u_k := u_{\varepsilon_k}$ concentrate energy on an area-minimzing cycle in $[\Sigma]$. In particular, this gave a new proof of the existence of such an area-minimizer. We think it worth remarking that, at least if $2 \leq n \leq 7$, the existence of a smooth area-minimizing representative of $[\Sigma]$ can now be proven entirely using level-set estimates for $u_k$. We do not claim any originality here---the argument proceeds by combining results of other authors in a straightforward manner. 

First, the energy of $u_k$ is bounded from above by a constant depending only on $(M,g)$ and the class $[\Sigma]$. This is established by constructing an appropriate competitor for each $\varepsilon_k$. The construction is very similar to that in Lemma~\ref{global energy}, but now we arrange that the competitors concentrate energy around some fixed smooth hypersurface in the class $[\Sigma]$. 
\iffalse
Since $L$ is nontrivial, each $u_k$ must vanish somewhere in $M$, and hence an argument similar to the proof of Lemma~\ref{support on loops} shows that 
    \begin{equation}\label{energy nonzero} \liminf_{k \to \infty} E_{\varepsilon_k}(u_k) > 0.\end{equation}
This argument requires an almost-monotonicity formula for the rescaled energy in small balls, and the fact that the positive part of the discrepancy 
    \[\xi_{k} := \varepsilon_k \frac{|\nabla u_k|^2}{2} - \frac{(1-|u_k|^2)^2}{4\varepsilon_k}\]
tends to zero in $L^1(M)$ as $k \to \infty$. Both of these ingredients can be adapted from \cite{HT} without major modifications; we refer to \cite[Appendix~B]{Guaraco} for the details. An alternative proof of \eqref{energy nonzero} can be found in \cite{Baldo--Orlandi}. 
\fi
It follows from \cite{Wang} and \cite{Wang-Wei_a} (see Section~5 of \cite{Chodosh-Mantoulidis} for the argument) that the nodal sets $u_k^{-1}(0)$ subconverge in the graphical $C^{2,\alpha}$-sense, and with multiplicity one, as $k \to \infty$; this is the step in which we require that $n \leq 7$. Since the hypersurfaces $u_k^{-1}(0)$ lie in $[\Sigma]$ for all large $k$, their limit does as well. So let us denote the limit by $\Sigma$. It remains to show that $\Sigma$ is area-minimizing. This follows since, if there were some other smooth representative of $[\Sigma]$ with less area than $\Sigma$, then for $k$ sufficiently large we could construct a section with less energy than $u_k$ (as in Lemma~\ref{proof of plateau}).

\bibliographystyle{alpha}
\bibliography{references}

\begin{thebibliography}{DLGM17}

\bibitem[AC00]{Ambrosio--Cabre}
Luigi Ambrosio and Xavier Cabr\'{e}.
\newblock Entire solutions of semilinear elliptic equations in $\mathbb{R}^3$
  and a conjecture of de giorgi.
\newblock {\em Journal of the American Mathematical Society}, 13(4):725--739,
  2000.

\bibitem[All72]{Allard}
William~K. Allard.
\newblock On the first variation of a varifold.
\newblock {\em Ann. of Math. (2)}, 95:417--491, 1972.

\bibitem[All75]{Allard_boundary}
William~K. Allard.
\newblock On the first variation of a varifold: boundary behavior.
\newblock {\em Ann. of Math. (2)}, 101:418--446, 1975.

\bibitem[Alm76]{Almgren}
F.~J. Almgren, Jr.
\newblock Existence and regularity almost everywhere of solutions to elliptic
  variational problems with constraints.
\newblock {\em Mem. Amer. Math. Soc.}, 4(165):viii+199, 1976.

\bibitem[Bel20]{Bellettini_1}
Costante Bellettini.
\newblock Multiplicity-1 minmax minimal hypersurfaces in manifolds with
  positive ricci curvature.
\newblock {\em arXiv preprint arXiv:2004.10112}, 2020.

\bibitem[Bel22]{Bellettini_2}
Costante Bellettini.
\newblock Generic existence of multiplicity-1 minmax minimal hypersurfaces via
  {A}llen-{C}ahn.
\newblock {\em Calc. Var. Partial Differential Equations}, 61(4):Paper No. 149,
  25, 2022.

\bibitem[BMS23]{Bellettini--Marshall-Stevens}
Costante Bellettini and Kobe Marshall-Stevens.
\newblock On isolated singularities and generic regularity of min-max cmc
  hypersurfaces.
\newblock {\em arXiv preprint arXiv:2307.10388}, 2023.

\bibitem[BO97]{Baldo--Orlandi}
Sisto Baldo and Giandomenico Orlandi.
\newblock Cycles of least mass in a {R}iemannian manifold, described through
  the ``phase transition'' energy of the sections of a line bundle.
\newblock {\em Math. Z.}, 225(4):639--655, 1997.

\bibitem[Bou16]{Bourni}
Theodora Bourni.
\newblock Allard-type boundary regularity for {$C^{1,\alpha}$} boundaries.
\newblock {\em Adv. Calc. Var.}, 9(2):143--161, 2016.

\bibitem[BW22]{Bellettini--Workman}
Costante Bellettini and Myles Workman.
\newblock Embeddedness of min-max cmc hypersurfaces in manifolds with positive
  ricci curvature.
\newblock {\em arXiv preprint arXiv:2212.09605}, 2022.

\bibitem[CC06]{CC}
Luis~A. Caffarelli and Antonio C\'{o}rdoba.
\newblock Phase transitions: uniform regularity of the intermediate layers.
\newblock {\em J. Reine Angew. Math.}, 593:209--235, 2006.

\bibitem[CM20]{Chodosh-Mantoulidis}
Otis Chodosh and Christos Mantoulidis.
\newblock Minimal surfaces and the {A}llen-{C}ahn equation on 3-manifolds:
  index, multiplicity, and curvature estimates.
\newblock {\em Ann. of Math. (2)}, 191(1):213--328, 2020.

\bibitem[CM23]{CM_widths}
Otis Chodosh and Christos Mantoulidis.
\newblock The p-widths of a surface.
\newblock {\em Publ. Math. Inst. Hautes \'{E}tudes Sci.}, 137:245--342, 2023.

\bibitem[DL21]{DeLellis}
Camillo De~Lellis.
\newblock The regularity theory for the area functional (in geometric measure
  theory).
\newblock {\em arXiv preprint arXiv:2110.11324}, 2021.

\bibitem[DLGM17]{DGM}
C.~De~Lellis, F.~Ghiraldin, and F.~Maggi.
\newblock A direct approach to {P}lateau's problem.
\newblock {\em J. Eur. Math. Soc. (JEMS)}, 19(8):2219--2240, 2017.

\bibitem[Dou31]{Douglas}
Jesse Douglas.
\newblock Solution of the problem of {P}lateau.
\newblock {\em Trans. Amer. Math. Soc.}, 33(1):263--321, 1931.

\bibitem[FF60]{Federer-Fleming}
Herbert Federer and Wendell~H. Fleming.
\newblock Normal and integral currents.
\newblock {\em Ann. of Math. (2)}, 72:458--520, 1960.

\bibitem[Fle66]{Fleming}
Wendell~H. Fleming.
\newblock Flat chains over a finite coefficient group.
\newblock {\em Trans. Amer. Math. Soc.}, 121:160--186, 1966.

\bibitem[FMV13]{FMV}
Alberto Farina, Luciano Mari, and Enrico Valdinoci.
\newblock Splitting theorems, symmetry results and overdetermined problems for
  riemannian manifolds.
\newblock {\em Communications in Partial Differential Equations},
  38(10):1818--1862, 2013.

\bibitem[FS90]{Struwe}
J{\"u}rg Fr{\"o}hlich and Michael Struwe.
\newblock Variational problems on vector bundles.
\newblock {\em Communications in mathematical physics}, 131(3):431--464, 1990.

\bibitem[GG19]{Gaspar-Guaraco}
Pedro Gaspar and Marco A.~M. Guaraco.
\newblock The {W}eyl law for the phase transition spectrum and density of limit
  interfaces.
\newblock {\em Geom. Funct. Anal.}, 29(2):382--410, 2019.

\bibitem[Gua18]{Guaraco}
Marco A.~M. Guaraco.
\newblock Min-max for phase transitions and the existence of embedded minimal
  hypersurfaces.
\newblock {\em J. Differential Geom.}, 108(1):91--133, 2018.

\bibitem[HP16]{Harrison-Pugh}
J.~Harrison and H.~Pugh.
\newblock Solutions to the {R}eifenberg {P}lateau problem with cohomological
  spanning conditions.
\newblock {\em Calc. Var. Partial Differential Equations}, 55(4):Art. 87, 37,
  2016.

\bibitem[HS79]{Hardt-Simon}
Robert Hardt and Leon Simon.
\newblock Boundary regularity and embedded solutions for the oriented {P}lateau
  problem.
\newblock {\em Ann. of Math. (2)}, 110(3):439--486, 1979.

\bibitem[HT00]{HT}
John~E. Hutchinson and Yoshihiro Tonegawa.
\newblock Convergence of phase interfaces in the van der
  {W}aals-{C}ahn-{H}illiard theory.
\newblock {\em Calc. Var. Partial Differential Equations}, 10(1):49--84, 2000.

\bibitem[KMS22]{KMS}
Darren King, Francesco Maggi, and Salvatore Stuvard.
\newblock Plateau's problem as a singular limit of capillarity problems
  (revised).
\newblock {\em Comm. Pure Appl. Math.}, 75(5):895--969, 2022.

\bibitem[MM77]{Modica--Mortola}
Luciano Modica and Stefano Mortola.
\newblock Un esempio di {$\Gamma \sp{-}$}-convergenza.
\newblock {\em Boll. Un. Mat. Ital. B (5)}, 14(1):285--299, 1977.

\bibitem[Mod87]{Modica}
Luciano Modica.
\newblock The gradient theory of phase transitions and the minimal interface
  criterion.
\newblock {\em Arch. Rational Mech. Anal.}, 98(2):123--142, 1987.

\bibitem[Rad30]{Rado}
Tibor Rad\'{o}.
\newblock On {P}lateau's problem.
\newblock {\em Ann. of Math. (2)}, 31(3):457--469, 1930.

\bibitem[Rei60]{Reifenberg}
E.~R. Reifenberg.
\newblock Solution of the {P}lateau {P}roblem for {$m$}-dimensional surfaces of
  varying topological type.
\newblock {\em Acta Math.}, 104:1--92, 1960.

\bibitem[Sim83]{SimonGMT}
Leon Simon.
\newblock {\em Lectures on Geometric Measure Theory}, volume~3.
\newblock The Australian National University, Mathematical Sciences Institute,
  Centre for Mathematics \& its Applications, 1 1983.

\bibitem[TW12]{TW}
Yoshihiro Tonegawa and Neshan Wickramasekera.
\newblock Stable phase interfaces in the van der {W}aals--{C}ahn--{H}illiard
  theory.
\newblock {\em J. Reine Angew. Math.}, 668:191--210, 2012.

\bibitem[Wan17]{Wang}
Kelei Wang.
\newblock A new proof of {S}avin's theorem on {A}llen-{C}ahn equations.
\newblock {\em J. Eur. Math. Soc. (JEMS)}, 19(10):2997--3051, 2017.

\bibitem[WW19]{Wang-Wei_a}
Kelei Wang and Juncheng Wei.
\newblock Finite {M}orse index implies finite ends.
\newblock {\em Comm. Pure Appl. Math.}, 72(5):1044--1119, 2019.

\end{thebibliography}

\end{document}